\documentclass[a4paper,10pt,leqno]{amsart}
\title{Some closed manifolds that do not fibre over the circle}
\author{Sam Hughes}
\address{Mathematicians Institut der Universit\"at Bonn\\
                Endenicher Allee 60\\
                53115 Bonn, Germany}
              \email{sam.hughes.maths@gmail.com}\email{hughes@math.uni-bonn.de}
              \urladdr{https://samhughesmaths.github.io}
         \author{Ian Leary}
         \address{School of Mathematical Sciences\\University of Southampton \\Southampton SO17 1B\\ UK}
         \email{I.J.Leary@soton.ac.uk}
         \urladdr{https://www.southampton.ac.uk/people/5x7v5y/professor-ian-leary}
\author{Wolfgang L\"uck}
        \address{Mathematicians Institut der Universit\"at Bonn\\
                Endenicher Allee 60\\
                53115 Bonn, Germany}
         \email{wolfgang.lueck@him.uni-bonn.de}\email{lueck@math.uni-bonn.de}
         \urladdr{http://www.him.uni-bonn.de/lueck}
         \date{June, 2026}
        \keywords{fibring over the circle, vanishing of $L^2$-Betti numbers}
         \makeatletter
     \@namedef{subjclassname@2020}{%
  \textup{2020} Mathematics Subject Classification}
\makeatother
\subjclass[2020]{55R10 (primary) 20F65 (secondary)}

 \usepackage[utf8]{inputenc}
    \usepackage{comment}
  \usepackage{calc}
  \usepackage{enumerate,amssymb,bm}
  \usepackage[arrow,curve,matrix,tips,2cell]{xy}
    \SelectTips{eu}{10} \UseTips
    \UseAllTwocells
  \usepackage{tikz}
  \usetikzlibrary{decorations.pathreplacing}
  \usepackage{color}
    \usepackage{scalerel}
  \usepackage{braket}
   \usepackage{mathtools}
  \usepackage{bbm}
  \usepackage{enumitem}
  \usepackage{float}
   \DeclareMathAlphabet{\matheurm}{U}{eur}{m}{n}
  \usepackage{hyperref}
  \usepackage{datetime2}

\DeclareMathAlphabet{\matheurm}{U}{eur}{m}{n}

\makeatletter
\def\@tocline#1#2#3#4#5#6#7{\relax
  \ifnum #1>\c@tocdepth 
  \else
    \par \addpenalty\@secpenalty\addvspace{#2}%
    \begingroup \hyphenpenalty\@M
    \@ifempty{#4}{%
      \@tempdima\csname r@tocindent\number#1\endcsname\relax
    }{%
      \@tempdima#4\relax
    }%
    \parindent\z@ \leftskip#3\relax \advance\leftskip\@tempdima\relax
    \rightskip\@pnumwidth plus4em \parfillskip-\@pnumwidth
    #5\leavevmode\hskip-\@tempdima
      \ifcase #1
       \or\or \hskip 1em \or \hskip 2em \else \hskip 3em \fi%
      #6\nobreak\relax
    \hfill\hbox to\@pnumwidth{\@tocpagenum{#7}}\par
    \nobreak
    \endgroup
  \fi}
\makeatother

\DeclareMathOperator{\cone}{cone}

\DeclareMathOperator{\F}{F}

\DeclareMathOperator{\FF}{FF}

\DeclareMathOperator{\FP}{FP}

\DeclareMathOperator{\id}{id}
\DeclareMathOperator{\im}{im}

\DeclareMathOperator{\Nil}{Nil}

\DeclareMathOperator{\pr}{pr}
\DeclareMathOperator{\res}{res}

\DeclareMathOperator{\SL}{SL}

\DeclareMathOperator{\Wh}{Wh}

  \newcommand{\IC}{\mathbb{C}}

  \newcommand{\IF}{\mathbb{F}}
  \newcommand{\IG}{\mathbb{G}}
  \newcommand{\IH}{\mathbb{H}}

  \newcommand{\IN}{\mathbb{N}}
  
  \newcommand{\IP}{\mathbb{P}}
  \newcommand{\IQ}{\mathbb{Q}}
  \newcommand{\IR}{\mathbb{R}}

  \newcommand{\IZ}{\mathbb{Z}}

  \newcommand{\cald}{\mathcal{D}}

  \newcommand{\caln}{\mathcal{N}}

  \newcommand{\calp}{\mathcal{P}}

\newcommand{\SLpar}[2]{\SL_{#1}(#2)}

\newcounter{commentcounter}

\theoremstyle{plain}
\newtheorem{theorem}{Theorem}[section]

\newtheorem{lemma}[theorem]{Lemma}

\newtheorem*{theorem*}{Theorem}
\newtheorem*{theoremA*}{Theorem A}
\newtheorem*{theoremB*}{Theorem B}
\newtheorem{addendum}[theorem]{Addendum}

\theoremstyle{definition}
\newtheorem{definition}[theorem]{Definition}
\newtheorem{example}[theorem]{Example}
\newtheorem{question}[theorem]{Question}

\newtheorem{remark}[theorem]{Remark}

\newtheorem*{definition*}{Definition}

\theoremstyle{remark}

\makeatletter\let\c@equation=\c@theorem\makeatother

\theoremstyle{definition}

\newcounter{othercommentcounter}

 \newcommand{\version}[1] 
     {\begin{center} last edited on #1\\
         last compiled on \today \ at \DTMcurrenttime.\\
         name of tex-file: \jobname
       \end{center}}

\newcommand{\RALI}{\textup{RALI}}
\newcommand{\RFRS}{\textup{RFRS}}
 \newcommand{\Sol}{\textup{Sol}}

\begin{document}

\begin{abstract}
  We construct closed manifolds  with vanishing $L^2$-Betti numbers
  (over every field) which do not virtually fibre over the circle.  The class of
  fundamental groups that occurs  is the largest possible,
  and in many cases the dimension may be taken to be six. We construct aspherical closed
  manifolds with residually (torsionfree and nilpotent) fundamental groups in all
  dimensions at least three whose $L^2$-Betti numbers vanish (over every field) and which
  do not virtually fibre over the circle.  In particular this implies that in Kielak's
  Theorem about virtually algebraic fibring for \RFRS-groups one cannot weaken the
  condition \RFRS\ to residually (torsionfree and nilpotent).
\end{abstract}

\maketitle

\newlength{\origlabelwidth} \setlength\origlabelwidth\labelwidth


\typeout{------------------- Introduction -----------------} 
\section{Introduction}\label{sec:introduction}

A major achievement of early 21st century topology is Agol's positive
resolution~\cite{Agol(2013)} to Thurston's question~\cite[Page~380]{Thurston(1982)}
whether every closed hyperbolic $3$-manifold virtually fibres over the circle $S^1$.  Here
a closed manifold $M$ \emph{fibres over the circle} if we may view it as a fibre bundle
$F\to M\to S^1$ where $F$ is a closed manifold.  A closed manifold \emph{virtually fibres
  over the circle} if it admits a finite cover $M'\to M$ such that $M'$ fibres over the
circle.

  It has long been known that finite volume even dimensional
  hyperbolic manifolds cannot virtually fibre over the circle.  The
  obstruction arises from the fact that their Euler characteristic is
  non-zero~\cite{Hirzebruch(1956)}.  But the 
  tantalising question remains whether odd dimensional closed
  hyperbolic manifolds virtually fibre over the circle, where very
  little is known in higher dimensions.  Work of
  Italiano--Martelli--Migliorini show that there exists a finite
  volume hyperbolic $5$-manifold which fibres over the
  circle~\cite{Italiano-Martelli-Migliorini(2023five)}.

Work of Farrell~\cite[Theorem~6.4]{Farrell(1971)} combined with
  developments around the Farrell--Jones Conjecture~\cite{Bartels-Lueck-Reich(2008hyper)}
  turn this into a homotopy theoretic problem (at least for dimensions at least 6).  We
  explain this in Remark~\ref{rem:Use_of_farrell_jones} and
  Remark~\ref{rem:Use_of_Farrell_Jones_and_Novikov}.  Namely, consider a (not necessarily
  aspherical) connected closed smooth manifold $M$ of dimension $\ge 6$, whose fundamental
  group $\pi$ is torsionfree and hyperbolic, finite-dimensional CAT(0), solvable, or a
  lattice, and a group epimorphism $\phi \colon \pi \to \IZ$. Then $M$ fibres over $S^1$
  in the sense that there exists a smooth fibre bundle $F \to M \xrightarrow{p} S^1$ of
  connected closed smooth manifolds with $\pi_1(p) = \phi$, if and only if the total space
  $\overline{M}$ of the infinite cyclic covering $\overline{M} \to M$ associated to $\phi$
  is homotopy equivalent to a $CW$-complex of finite type. The latter condition is
  equivalent to $\pi_1(\overline{M})$ being finitely presented and the vanishing of all
  the homology groups of the universal covering $\widetilde{M}$ with coefficients in the
  Novikov rings associated to $\phi$.  In the case the manifold $M$ is aspherical this
  amounts to showing the fundamental group of the fibre is type F.

A group $G$ is \emph{type} F$_n$ if it admits a $K(G,1)$ space with
finitely many $k$-cells for $k\leq n$.  A group is \emph{type}
F$_{\infty}$ if it is type F$_n$ for all $n$,  or, equivalently, it admits a $K(G,1)$ space with
finitely many $k$-cells for every $k \in \IZ_{\ge 0}$.  A group is \emph{type}
F if it admits a $K(G,1)$ space with finitely many cells.  Note that
F$_1$ is equivalent to finite generation and F$_2$ is equivalent to
finite presentability.  For a non-trivial ring $R$, a group is
\emph{type} FP$_n(R)$ if it admits a projective resolution $P_\ast$ of
$R$ as a trivial $RG$-module such that $P_k$ is finitely generated for
$k\leq n$.  A group is \emph{type} $\FP_{\infty}(R)$ if it is type
$\FP_n(R)$ for all $n$.  A group is \emph{type} $\FP(R)$ if it admits
a finite length projective resolution of $R$ as a trivial $RG$-module
with each term finitely generated.  Note that F$_n$ implies $\FP_n(R)$
for all $n$, that F$_1$ is equivalent to $\FP_1(R)$, and that F$_2$ is
not equivalent to $\FP_2(R)$~\cite{Bestvina-Brady(1997)}.  For one of
the finiteness properties $(P) $ just defined, we say a group $G$
\emph{virtually $(P)$-fibres} if it admits a finite index subgroup $H$
with an epimorphism $\phi:H\twoheadrightarrow \IZ$ such that
$\ker\phi$ is type $(P)$.  There exists $7$-dimensional finite volume
hyperbolic manifold $M$ such that $\pi_1(M)$ simultaneously virtually
F$_2$-fibres and virtually $\FP(\IQ)$ fibres,
see~\cite{Fisher(2023),Italiano-Martelli-Migliorini(2024up_to_eight)}.

If the universal cover of a compact manifold $M$ has a non-zero
  $L^2$-Betti number with respect to the action of the deck transformation group, then $M$
  cannot virtually fibre over the circle~\cite{Lueck(1994b)}.  
  In~\cite{Kielak(2020fibring)}, Kielak showed a partial converse to this result for the
  class of residually finite rationally solvable groups (\RFRS\ groups).  Namely, \emph{a
  finitely generated \RFRS\ group virtually F$_1$-fibres if and only if $b_1^{(2)}(G)=0$}.
  This was generalised by Fisher to higher dimensions~\cite{Fisher(2024)}: \emph{a \RFRS\ group
  of type $\FP_n(\IQ)$ virtually $\FP_n(\IQ)$-fibres if and only if $b_k^{(2)}(G)=0$ for
  $k\leq n$}.  In particular, these results apply to any odd dimensional finite volume
  hyperbolic manifold $M$ with virtually \RFRS\ fundamental group, since $b_k^{(2)}(\pi_1 M)=0$ for
  all $k$.   Many such manifolds exist after work of Agol~\cite{Agol(2008),Agol(2013)},
  Bergeron--Haglund--Wise~\cite{Bergeron-Haglund-Wise(2011)},
  Haglund--Wise~\cite{Haglund-Wise(2008),Haglund-Wise(2012)}, and Wise~\cite{Wise(2009)}.
  Analogues for finiteness properties and $L^2$-Betti numbers over other fields were also
  discussed in~\cite{Fisher(2024)}.     For a survey on the problem of fibring manifolds and groups we refer the reader
  to~\cite{Kielak(2025ICM)}.
  
  It is tempting to ask if the $L^2$-Betti numbers and their analogues in prime characteristic give
  complete obstructions to virtually fibring over the circle and if one can generalise
  these results beyond the class of \RFRS\ groups.  Note that it is indeed necessary to consider
 the prime characteristic analogues~\cite{Avramidi-Okun-Schreve(2021),Avramidi-Okun-Schreve(2024),Fisher-Hughes-Leary(2024)}.

  In this paper we present  two constructions:

  \begin{itemize}
  \item The first construction (Theorem~\ref{the:counterexamples_to_fibring_non_aspherical})
  gives manifolds of sufficiently  large dimension that have vanishing
  $L^2$-Betti numbers and that do not virtually fibre over the circle.
  The class  of fundamental groups that  occurs in Theorem~\ref{the:counterexamples_to_fibring_non_aspherical}  is the
  largest possible, and in many cases the dimension may be taken to be
  six. These manifolds are not aspherical; they even include examples in which
  the fundamental group is infinite cyclic.

\item Our second construction
  (Theorem~\ref{the:apherical_counterexamples_with_residually_nilpotent_fundamental_group})
  gives aspherical closed manifolds with residually (torsionfree and nilpotent)
  fundamental groups in all dimensions at least three whose $L^2$-Betti numbers vanish
  (over every field) and which do not virtually fibre over the circle.  In particular this
  implies that in Kielak's Theorem about virtual algebraic fibring for \RFRS-groups one
  cannot weaken the condition \RFRS\ to residually (torsionfree and nilpotent), see
  Remark~\ref{rem:RFRS_cannot_ve_replaced_by_RFRS}.  Note that such a generalisation was
  alluded to in~\cite[pages~17--18]{Kielak(2025ICM)}, asked about
  in~\cite[Question~7.2]{Fisher-Klinge(2024)}, and conjectured
  in~\cite[Conjecture~1.4]{Escartin-Ferrer(2025)}.
\end{itemize}

One might expect that direct products could be used to create higher
  dimensional examples as in our constructions using direct products.  In
  Section~\ref{sec:Products} we show that this cannot be done in any easy way.
  In an appendix we summarize some known results for 3-manifolds.


The paper is organized as follows:

\tableofcontents

\subsection*{Acknowledgments}\label{subsec:Acknowledgements}
SH was supported by a Humboldt Research Fellowship at Universit\"at Bonn.  SH and WL are
supported by the Deutsche Forschungsgemeinschaft (DFG, German Research Foundation) under
Germany's Excellence Strategy \--- GZ 2047/1, Projekt-ID 390685813, Hausdorff Center for
Mathematics at Bonn.



\typeout{------- Section 2: Preliminaries ------}
\section{Preliminaries}

\subsection{Farrell's PhD-thesis on fibring over a circle}%
\label{sec:Farrells_PhD-thesis_on_fibring_over_a_circle}

We will only need the following consequence of~\cite[Theorem~6.4]{Farrell(1971)}.

\begin{theorem}\label{the:Farrells_fibring_theorem}
  Let $M$ be a connected closed smooth manifold of dimension $\ge 6$. Consider an
  epimorphism $\phi \colon \pi_1(M) \to \IZ$. Suppose that $\Wh(\pi_1(M))$ vanishes.  Let
  $\overline{M}$ be the infinite cyclic covering associated to $\phi$.  Then the following
  assertions are equivalent:

  \begin{enumerate}
  \item\label{the:Farrells_fibring_theorem:fibring} There is a smooth locally trivial
    fibre bundle $F \to M \xrightarrow{p} S^1$ of closed smooth manifolds such that
    $\pi_1(p) \colon \pi_1(M) \to \pi_1(S^1)$ coincides with $\phi$ under the standard
    isomorphism $\IZ \xrightarrow{\cong} \pi_1(S^1)$;

  \item\label{the:Farrells_fibring_theorem_homotopy_fibre}

    $\overline{M}$ is homotopy equivalent to a finite $CW$-complex.

  \end{enumerate}
\end{theorem}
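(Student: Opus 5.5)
The plan is to deduce the statement from Farrell's theorem~\cite[Theorem~6.4]{Farrell(1971)}. That theorem says the following. For a closed smooth manifold $M$ of dimension $\ge 6$ and an epimorphism $\phi$, there is a fibre bundle realising $\phi$ if and only if two conditions hold. First, $\overline{M}$ is finitely dominated. Second, a certain obstruction $\tau(M,\phi)$ vanishes; it lives in $\Wh(\pi_1(M))$, or more precisely in a quotient of it.

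For the direction (\ref{the:Farrells_fibring_theorem:fibring}) $\Rightarrow$ (\ref{the:Farrells_fibring_theorem_homotopy_fibre}), no input from Farrell is needed. Given the bundle $F \to M \to S^1$ with $\pi_1(p)=\phi$, pull it back along the universal covering $\IR \to S^1$. The pullback is the infinite cyclic covering of $M$ associated to $\pi_1(p) = \phi$, so it is $\overline{M}$. Since $\IR$ is contractible, the pullback bundle is trivial, so $\overline{M} \cong F \times \IR \simeq F$. A closed smooth manifold has the homotopy type of a finite CW-complex, for instance via a smooth triangulation. This gives (\ref{the:Farrells_fibring_theorem_homotopy_fibre}).

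For the converse, assume $\overline{M}$ is homotopy equivalent to a finite CW-complex; in particular $\overline{M}$ is finitely dominated. Farrell's theorem then reduces the existence of the fibration to the vanishing of $\tau(M,\phi)$. This obstruction is built from the Whitehead torsion of the self-map of $\overline{M}$ induced by the generator of the deck group, or equivalently from the torsion of the homotopy equivalence to the mapping torus of that self-map. Under our hypothesis $\Wh(\pi_1(M)) = 0$, the group containing $\tau(M,\phi)$ is itself zero, so the obstruction vanishes automatically and Farrell's theorem produces the bundle.

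The points that need care are these.
\begin{itemize}
\item Farrell's finiteness condition is phrased as $\overline{M}$ being finitely dominated, together with $\pi_1(\overline{M})$ being finitely presented in some formulations. Homotopy equivalence to a finite CW-complex implies both.
\item The fibre Farrell constructs is connected and closed, and $\pi_1(p) = \phi$ holds under the standard identification. Both are part of his statement.
\item Farrell's second obstruction lies in $\Wh(\pi_1(M))$ (or a quotient), so it really is killed by $\Wh(\pi_1(M)) = 0$. This holds in his setting; one should check that the relevant group is not the Whitehead group of $\pi_1(\overline{M})$.
\end{itemize}
The main obstacle is the last point. Matching the precise formulation of~\cite[Theorem~6.4]{Farrell(1971)} to our hypotheses is the only real work. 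Once that matching is done, the proof consists of the two observations above.
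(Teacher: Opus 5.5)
Your proposal is correct and follows the paper's proof. The forward direction goes through the fibre being a closed manifold homotopy equivalent to $\overline{M}$, and the converse applies Farrell's Theorem~6.4 to the map $M \to S^1$ inducing $\phi$, which is unique up to homotopy. The point you flag is exactly what the paper settles by citing Theorem~21 of Farrell--Hsiang: both of Farrell's obstructions $c(f)$ and $\tau(f)$ take values in groups that are subgroups or quotients of $\Wh(\pi_1(M))$, so they vanish when $\Wh(\pi_1(M))=0$.
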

\begin{proof}\ref{the:Farrells_fibring_theorem:fibring}
  $\implies$~\ref{the:Farrells_fibring_theorem_homotopy_fibre}.  If $p$ is a locally
  trivial fibre bundle $F \to M \xrightarrow{p} S^1$ of closed smooth manifolds, then the
  preimage under $f$ of a point in $S^1$ is a closed manifold which homotopy equivalent to
  $\overline{M}$.  Any closed smooth manifold has the homotopy type of a finite
  $CW$-complex.  \\[1mm]~\ref{the:Farrells_fibring_theorem_homotopy_fibre}
  $\implies$~\ref{the:Farrells_fibring_theorem:fibring}.  Up to homotopy there is
  precisely one map $f \colon M \to S^1$ which induces $\phi$ on the fundamental groups.
  Now the implication follows directly from~\cite[Theorem~6.4]{Farrell(1971)} using the
  fact that the two obstructions $c(f)$ and $\tau(f)$ appearing there take values in
  groups which are either subgroups or quotient groups of $\Wh(\pi_1(M))$,
  see~\cite[Theorem~21]{Farrell-Hsiang(1970)}.
\end{proof}

Theorem~\ref{the:Farrells_fibring_theorem} holds also in the topological category and in
the PL-category.

\begin{remark}\label{rem:locally_trivial_bundle_versus_fibration}
  Note that in the situation of Theorem~\ref{the:Farrells_fibring_theorem} we can replace
  condition~\ref{the:Farrells_fibring_theorem:fibring} by the equivalent condition that
  there is a fibration $q \colon E \to S^1$ whose fibre is homotopic to a finite
  $CW$-complex and a homotopy equivalence $h \colon M \to E$ such that $q \circ h$ and $p$
  are homotopic.  Note that the latter condition may be phrased as fibring in the
  homotopy category. Moreover, it obviously implies
  condition~\ref{the:Farrells_fibring_theorem_homotopy_fibre} and follows from
  condition~\ref{the:Farrells_fibring_theorem:fibring}.
\end{remark}


\typeout{--------- Section 2.2: Finiteness conditions on $CW$-complexes ------}

\subsection{Finiteness conditions on \texorpdfstring{$CW$}{CW}-complexes}%
\label{sec:Finiteness_conditions_on_CW-complexes}

In order to apply Theorem~\ref{the:Farrells_fibring_theorem} one needs to handle the
problem when a given space has the homotopy type of a finite $CW$-complex which we will
discuss in this section.

 \begin{definition}[Types]\label{def:types}
   A space $X$ is \emph{of type $\F_d$} or \emph{finite $d$-type} for
   some $d \in \IZ_{\ge -1}$ if it is homotopy equivalent to a
   $CW$-complex whose $d$-skeleton is finite, and is \emph{of type
   $\F_{\infty}$} if it is homotopy equivalent to a $CW$-complex of
   finite type, i.e, a $CW$-complex all of whose skeleta are finite.
   A space $X$ is \emph{finitely-dominated} if there exists a finite
   $CW$-complex $Y$ and maps $i \colon X \to Y$ and $r \colon Y \to X$
   such that $r \circ i$ and $\id_X$ are homotopic.  Every finitely
   dominated space has the homotopy type of a $CW$-complex. We call it
   of type $\FF$ or $\FP$ respectively if it is homotopy equivalent to
   a $CW$-complex which is finite or finitely dominated respectively.
 \end{definition}

 Note that a connected space $X$ having the homotopy type of a
 $CW$-complex is always of type $\F_0$, is of type $\F_1$ if and only
 if $\pi_1(X)$ is finitely generated, and is of type $\F_2$ if and
 only if $\pi_1(X)$ is finitely presented. Moreover $X$ is of type
 $\F_{\infty}$ if and only if it is of type $\F_d$ for all $d \in
 \IZ_{\ge 0}$.  If $X$ is of type $\F_d$, $\F_{\infty}$, $\FF$, or
 $\FP$ respectively and $\widehat{X} \to X$ is a finite covering, then
 $\widehat{X}$ is of type $\F_d$, $\F_{\infty}$, $\FF$, or $\FP$
 respectively.

In the sequel all chain complexes $C_*$ are assumed to be positive, i.e., $C_n = 0$ for
$n \in \IZ_{\le -1}$.  An $S$-chain complex $C_*$ is called \emph{finitely generated
  projective} if each $C_i$ is a finitely generated projective $S$-module. We say that an
$S$-chain complex $C_*$ is \emph{finite-dimensional} if there exists $N \in \IZ_{\ge 0}$
such that $C_n \not= 0 \implies n \le N$ holds for all $n \in \IZ$.  We call an $S$-chain
complex $C_*$ \emph{finite projective} if it is both finitely generated projective and
finite-dimensional.

For $R$ a ring, $n\in\IZ_{\ge 0}$, and $C_\ast$ a chain complex of projective $R$-modules, we say
that $C_\ast$ has \emph{finite $n$-type} (\emph{over} $R$) if there is a projective
$R$-chain complex $P_*$ such that $P_i$ is finitely generated projective for every
$i \le n$ and a $R$-chain map $f_* \colon P_* \to C_*$ which is a homology equivalence,
i.e., $H_i(f_*)$ is bijective for all $i \ge 0$. 

The chain complex $C_\ast$ being finite $n$-type is equivalent to there existing a
projective $R$-chain complex $P_*$ such that $P_i$ is finitely generated projective for
$i \le n$ and $P_*$ and $C_*$ are $R$-chain homotopy equivalent. We say that  $C_\ast$
is \emph{of finite type}, if
we can choose $P_*$ in the definition above to be finitely generated projective.  This is equivalent to $C_*$ being of
finite $n$-type for every $n \in \IZ_{\ge 0}$.

\begin{lemma}\label{lem:criteria_for_finitely_dominated}
  The following assertions are equivalent for a connected $CW$-complex $X$:

  \begin{enumerate}
  \item\label{lem:criteria_for_finitely_dominated:finitely_dominated} $X$ is finitely
    dominated;

  \item\label{lem:criteria_for_finitely_dominated:finite.dimensional_and_of_finite_type}
    There is a $CW$-complex $Y$ of finite type and a finite-dimensional $CW$-complex $Z$
    such that $X$ is homotopy equivalent to both $Y$ and $Z$;

  \item\label{lem:criteria_for_finitely_dominated:chain_complex} The fundamental group
    $\pi = \pi_1(X)$ is finitely presented and the cellular $\IZ \pi$-chain complex
    $C_*(\widetilde{X})$ of its universal covering $\widetilde{X}$ is $\IZ \pi$-chain
    homotopy equivalent to a finite projective $\IZ \pi$-chain complex.

  \end{enumerate}
\end{lemma}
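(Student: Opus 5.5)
We prove the cycle of implications (i) $\Rightarrow$ (iii) $\Rightarrow$ (ii) $\Rightarrow$ (i). This is essentially Wall's finiteness theory, and we plan to cite Wall's papers for the two nontrivial realization steps rather than redo them.

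\textbf{(i) $\Rightarrow$ (iii).} Suppose $X$ is dominated by a finite complex $K$ via $i\colon X\to K$ and $r\colon K\to X$ with $r\circ i\simeq \id_X$. Then $\pi_1(X)$ is a retract of the finitely presented group $\pi_1(K)$. A retract of a finitely presented group is finitely presented: it is finitely generated, and its relations are the normal closure of finitely many elements $w\,\rho(w)^{-1}$ together with the images of the relators of $K$. Passing to universal coverings, we pull the domination back along $\pi_1(r)$. This exhibits $C_*(\widetilde X)$ as a chain homotopy retract of the finite free $\IZ\pi$-complex $\IZ\pi\otimes_{\IZ\pi_1(K)}C_*(\widetilde K)$, with the tensor product taken via $\pi_1(r)$. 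A chain complex that is a homotopy retract of a finite free complex is chain homotopy equivalent to a finite projective complex. To see this, split off the image of the idempotent up to homotopy by the standard mapping telescope / Wall argument: the complex is of finite type and has projective dimension bounded by the dimension of $K$. So the top-dimensional truncation can be made projective and finitely generated.

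\textbf{(iii) $\Rightarrow$ (ii).} Since $\pi$ is finitely presented and $C_*(\widetilde X)$ is chain homotopy equivalent to a finite projective complex $P_*$, we adding elementary complexes $Q\xrightarrow{\id}Q$ with $Q\oplus P_n$ free. This gives a finite-type free complex, and also a finite-dimensional free complex (infinitely generated free in the top degree, via the Eilenberg swindle). We then invoke Wall's realization theorem: starting from a finite $2$-complex realizing $\pi$, cells are attached inductively to realize a given free chain complex homotopy equivalent to $C_*(\widetilde X)$. The homotopy equivalences are detected by the Whitehead theorem on universal coverings, since they induce isomorphisms on $\pi_1$ and on $H_*(\widetilde{\,\cdot\,})$. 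This yields $Y$ of finite type and $Z$ finite-dimensional. The main obstacle is exactly this realization step in low degrees (degrees $\le 2$, where one needs dimension $\ge 3$). We handle it as Wall does, by stabilizing the dimension with one extra degree.

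\textbf{(ii) $\Rightarrow$ (i).} Let $Z$ have dimension $N$, and let $Y$ be of finite type. The homotopy equivalence $Y\simeq Z$ factors up to homotopy through the finite skeleton $Y^{(N+1)}$ by cellular approximation. Hence $X\simeq Z\to Y^{(N+1)}\to Y\simeq X$ gives a domination by a finite complex.
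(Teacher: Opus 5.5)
Your proposal is correct and takes essentially the same approach as the paper: the paper's proof just cites Wall's finiteness theory (and L\"uck's equivariant version), and your cycle (i)$\Rightarrow$(iii)$\Rightarrow$(ii)$\Rightarrow$(i) is a faithful sketch of that argument, handing the same two nontrivial inputs to Wall (a chain homotopy retract of a finite free complex is equivalent to a finite projective one, and cell-by-cell realization in dimensions $\ge 3$). The elementary step (ii)$\Rightarrow$(i) by cellular approximation is fine as written; the skeleton $Y^{(N)}$ already suffices.
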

\begin{proof}
  See~\cite{Wall(1966)} or in the more general equivariant setting~\cite[Proposition~11.11
  on page~222 and Proposition~14.9 on page~282]{Lueck(1989)}.
\end{proof}

\begin{remark}\label{rem:Walls_finiteness_obstruction}
  Let $X$ be a finitely dominated connected $CW$-complex with fundamental group
  $\pi = \pi_1(X)$.  Then there is an obstruction $\widetilde{o}(X)$ in the reduced
  projective class group $\widetilde{K}_0(\IZ \pi)$ which vanishes if and only if $X$ is
  homotopy equivalent to a finite $CW$-complex, see Wall~\cite{Wall(1965a),Wall(1966)}, or
  in the more general equivariant setting~\cite[Theorem~14.6 on page 278]{Lueck(1989)}.
  
  Hence $X$ is automatically homotopy equivalent to a finite $CW$-complex if
  $\widetilde{K}_0(\IZ \pi)$ vanishes.
\end{remark}

\begin{theorem}\label{the:Farrells_fibring_theorem_improved}
  Let $M$ be a connected closed smooth manifold of dimension $\ge 6$. Consider an
  epimorphism $\phi \colon \pi_1(M) \to \IZ$. Suppose that both $\Wh(\pi_1(M))$ and
  $\widetilde{K}_0(\IZ[\ker(\phi)])$ vanish.  Let $\overline{M}$ be the infinite cyclic
  covering associated to $\phi$.  Then the following assertions are equivalent:

  \begin{enumerate}
  \item\label{the:Farrells_fibring_theorem_improved:fibring} There is a smooth locally
    trivial fibre bundle $F \to M \xrightarrow{p} S^1$ of closed smooth manifolds such
    that $\pi_1(p) \colon \pi_1(E) \to \pi_1(S^1)$ coincides with $\phi$ under the
    standard isomorphism $\IZ \xrightarrow{\cong} \pi_1(S^1)$;

  \item\label{the:Farrells_fibring_theorem_improved:homotopy_fibre_finite}

    $\overline{M}$ is homotopy equivalent to a finite $CW$-complex;

  \item\label{the:Farrells_fibring_theorem_improved:homotopy_fibre_of_finite_type}

    $\overline{M}$ is homotopy equivalent to $CW$-complex of finite type;

  \item\label{the:Farrells_fibring_theorem_improved:fin_proj_chain_complex}

    The fundamental group $\pi_1(\overline{M})$ is finitely presented and the
    $\IZ[\pi_1(\overline{M})]$-chain complex $i^* C_*(\widetilde{M})$ obtained from the
    cellular $\IZ[\pi_1(M)]$-chain complex $C_*(\widetilde{M})$ by restriction with the
    inclusion $i \colon \pi_1(\overline{M}) = \ker(\phi) \to \pi_1(M)$ is
    $\IZ[\pi_1(\overline{M})]$-chain homotopy equivalent to finite projective
    $\IZ[\pi_1(\overline{M})]$-chain complex;

  \item\label{the:Farrells_fibring_theorem_improved:fin_gen_pro_jchain_complex}

    The fundamental group $\pi_1(\overline{M})$ is finitely presented and the
    $\IZ[\pi_1(\overline{M})]$-chain complex $i^* C_*(\widetilde{M})$ is
    $\IZ[\pi_1(\overline{M})]$-chain homotopy equivalent to (not necessarily
    finite-dimensional) finitely generated projective $\IZ[\pi_1(\overline{M})]$-chain
    complex.

  \end{enumerate}
\end{theorem}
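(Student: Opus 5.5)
Since (i) and (ii) are already equivalent by Theorem~\ref{the:Farrells_fibring_theorem} (only $\Wh(\pi_1(M))=0$ is needed there), I will close a cycle through the remaining conditions: (ii)$\implies$(iii)$\implies$(v)$\implies$(iv)$\implies$(ii).

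Throughout, $\overline{M}$ is a connected $CW$-complex and $\pi_1(\overline{M})=\ker(\phi)$. The universal covering of $\overline{M}$ is $\widetilde{M}$, so its cellular $\IZ[\ker\phi]$-chain complex is exactly $i^*C_*(\widetilde{M})$.

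The implication (ii)$\implies$(iii) is trivial. For (iii)$\implies$(v), let $h\colon \overline{M}\to Y$ be a homotopy equivalence with $Y$ of finite type. Then $\pi_1(Y)\cong\ker\phi$ is finitely presented, since $Y$ has a finite $2$-skeleton. Lifting $h$ to universal coverings gives a $\IZ[\ker\phi]$-chain homotopy equivalence from $i^*C_*(\widetilde{M})$ to $C_*(\widetilde{Y})$. The latter is finitely generated free, after identifying the fundamental groups via $\pi_1(h)$.

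The key step is (v)$\implies$(iv). Here the essential input is that $\overline{M}$ is finite-dimensional: it is a covering of the finite $CW$-complex $M$, so $i^*C_*(\widetilde{M})$ is a finite-dimensional free chain complex, say of dimension $N$. Suppose $i^*C_*(\widetilde{M})\simeq P_*$ with $P_*$ finitely generated projective. By the standard truncation argument of Wall, $H_k(P_*)=0$ for $k>N$, and $H^{N+1}(P_*;-)$ vanishes because it agrees with the cohomology of an $N$-dimensional complex. Hence $B_{N}(P_*)$ is a direct summand of $P_N$, so the kernel $\ker(d_N)$ is finitely generated projective and the truncation
\[
0\to P_N/B_N' \to P_{N-1}\to\cdots\to P_0
\]
is finite projective and chain homotopy equivalent to $P_*$. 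To carry this out, I would phrase it as: a projective chain complex homotopy equivalent to an $N$-dimensional one has $\operatorname{coker}(d_{N+1})$ projective and splits off the upper part. Equivalently, I could simply cite Lemma~\ref{lem:criteria_for_finitely_dominated}: (v) together with finite-dimensionality of $\overline{M}$ gives condition (ii) of that lemma, provided I first realise the algebraic finite type data geometrically (Wall's realisation for $\pi_1$ finitely presented, dimension $\geq 3$ handled by stabilising). This is the step I expect to need the most care. The cleaner route is the purely algebraic truncation, which outputs (iv) directly.

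For (iv)$\implies$(ii), condition (iv) is exactly condition (iii) of Lemma~\ref{lem:criteria_for_finitely_dominated} for $X=\overline{M}$, so $\overline{M}$ is finitely dominated. By Remark~\ref{rem:Walls_finiteness_obstruction}, Wall's finiteness obstruction lies in $\widetilde{K}_0(\IZ[\ker\phi])$, which vanishes by hypothesis, so $\overline{M}$ is homotopy equivalent to a finite $CW$-complex. This closes the cycle and, combined with Theorem~\ref{the:Farrells_fibring_theorem}, proves the theorem.
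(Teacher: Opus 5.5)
Your proposal is correct and follows the paper's proof. The paper also gets (i)$\Leftrightarrow$(ii) from Theorem~\ref{the:Farrells_fibring_theorem}. It gets (v)$\Rightarrow$(iv) from the finite-dimensionality of $i^*C_*(\widetilde{M})$ via Wall's truncation, cited as L\"uck's Proposition~11.10. It returns to (ii) through Lemma~\ref{lem:criteria_for_finitely_dominated} and the vanishing of Wall's obstruction in $\widetilde{K}_0(\IZ[\ker(\phi)])$. The only difference is that the paper arranges these as pairwise implications rather than your single cycle.

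One small slip in your truncation step: once $B_N$ splits off $P_N$, the finitely generated projective module is $\operatorname{coker}(d_{N+1}) = P_N/B_N$, not $\ker(d_N)$. That cokernel is what your displayed truncated complex actually uses.
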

\begin{proof}~\ref{the:Farrells_fibring_theorem_improved:fibring}
  $\Longleftrightarrow$~\ref{the:Farrells_fibring_theorem_improved:homotopy_fibre_finite}
  This follows from Theorem~\ref{the:Farrells_fibring_theorem}.
  \\[1mm]~\ref{the:Farrells_fibring_theorem_improved:homotopy_fibre_finite}
  $\Longleftrightarrow$\ref{the:Farrells_fibring_theorem_improved:homotopy_fibre_of_finite_type}
  This follows from Lemma~\ref{lem:criteria_for_finitely_dominated} and
  Remark~\ref{rem:Walls_finiteness_obstruction}, since $\overline{M}$ is a smooth manifold
  and hence homotopy equivalent to a $CW$-complex of the finite dimension
  $\dim(\overline{M})$.
  \\[1mm]~\ref{the:Farrells_fibring_theorem_improved:homotopy_fibre_finite}
  $\implies$~\ref{the:Farrells_fibring_theorem_improved:fin_proj_chain_complex} This
  follows from Lemma~\ref{lem:criteria_for_finitely_dominated}.
  \\[1mm]~\ref{the:Farrells_fibring_theorem_improved:fin_proj_chain_complex}
  $\implies$~\ref{the:Farrells_fibring_theorem_improved:homotopy_fibre_of_finite_type}
  This also follows from Lemma~\ref{lem:criteria_for_finitely_dominated}.
  \\[1mm]~\ref{the:Farrells_fibring_theorem_improved:fin_proj_chain_complex}
  $\implies$~\ref{the:Farrells_fibring_theorem_improved:fin_gen_pro_jchain_complex} This
  is is obvious.
  \\[1mm]~\ref{the:Farrells_fibring_theorem_improved:fin_gen_pro_jchain_complex}
  $\implies$~\ref{the:Farrells_fibring_theorem_improved:fin_proj_chain_complex} This
  follows from~\cite[Proposition~11.10 on page~221]{Lueck(1989)} since
  $i^* C_*(\widetilde{M})$ is $\IZ[\pi_1(\overline{M})]$-chain homotopy equivalent to a
  finite-dimensional $\IZ[\pi_1(\overline{M})]$-chain complex.
\end{proof}


\typeout{--------- Section 2.3: Consequences of the Farrell--Jones Conjecture ------}

\subsection{Consequences of the Farrell--Jones Conjecture}%
\label{sec:Consequences_of_the_Farrell-Jones_Conjecture}

A group $G$ is called a \emph{Farrell--Jones group} if it satisfies the
so-called \emph{Full Farrell--Jones Conjecture} as formulated, for
example, in~\cite[Conjecture~13.30 on page~387]{Lueck(2022book)}. The
full statement is quite complicated and involves $L$-theory as well as
$K$-theory and the precise formulation is not relevant for this paper.
But the following facts are important for us, see~\cite[Theorem~16.1
  on page~481 and Theorem~13.65 on page~405]{Lueck(2022book)}.

\begin{itemize}
\item The class of Farrell--Jones groups contains hyperbolic groups, finite dimen\-sio\-nal
  CAT(0)-groups, virtually solvable groups, lattices in path connected second countable
  locally compact Hausdorff groups, fundamental groups of (not necessarily compact)
  connected manifolds (possibly with boundary) of dimension $\le 3$, and $S$-arithmetic
  groups;

\item The class of Farrell--Jones groups has the following inheritance properties: it is
  closed under the passage to subgroups, to overgroups of finite index, to finite direct
  products, finite free products, and to colimits over directed systems (with arbitrary
  structure maps);

\item If $G$ is a Farrell--Jones group and is torsionfree, then $\Wh(G)$ and
  $\widetilde{K}_0(\IZ[G])$ vanish.

\end{itemize}

\begin{remark}\label{rem:Use_of_farrell_jones}
  For us the following consequence is interesting. Let $M$ be a connected closed smooth
  manifold of dimension $\ge 6$ and $\phi \colon \pi_1(M) \to \IZ$ be an epimorphism of
  groups. Assume that $\pi_1(M)$ is a torsionfree Farrell--Jones group.  Then both
  $\Wh(\pi_1(M))$ and $\widetilde{K}_0(\IZ[\ker(\phi)])$ vanish. Hence the following
  assertions are equivalent by Theorem~\ref{the:Farrells_fibring_theorem_improved}:
  \begin{itemize}
  \item There is a smooth locally trivial fibre bundle $F \to M \xrightarrow{p} S^1$ of
    closed smooth manifolds such that $\pi_1(p) \colon \pi_1(E) \to \pi_1(S^1)$ coincides
    with $\phi$ under the standard isomorphism $\IZ \xrightarrow{\cong} \pi_1(S^1)$;

  \item The fundamental group $\pi_1(\overline{M})$ is finitely presented and the
    $\IZ[\pi_1(\overline{M})]$-chain complex $i^*C_*(\widetilde{M})$ is
    $\IZ[\pi_1(\overline{M})]$-chain homotopy equivalent to a (not necessarily
    finite-dimensional) finitely generated projective $\IZ[\pi_1(\overline{M})]$-chain
    complex.
  \end{itemize}
\end{remark}


\typeout{--------- Section 2.4: The Novikov ring ------}

\subsection{The Novikov ring}%
\label{sec:The_Novikov_ring}

In view of Theorem~\ref{the:Farrells_fibring_theorem_improved} the following problem
occurs.  Let $\phi \colon G \to \IZ$ be a surjective group homomorphism and $C_*$ be a
finite projective $\IZ[G]$-chain complex.  Let $K$ be the kernel of $\phi$ and
$i \colon K \to G$ be the inclusion.  Let $i^*C_*$ be the projective $\IZ[K]$-chain
complex obtained from $C_*$ by restriction via~$i$. We want to decide whether $i^*C_*$ is
$\IZ[K]$-chain homotopy equivalent to a finite projective $\IZ[K]$-chain complex.  This
can be done in terms of the Novikov ring as explained next.

Let $S$ be a ring. Let $\Psi \colon S \xrightarrow{\cong} S$ be a ring automorphism.  Let
$S_{\Psi}[t,t^{-1}]$ be the \emph{ring of $\Psi$-twisted finite Laurent series}, i.e.,
formal sums $\sum_{n \in \IZ} s_nt^n$ for $s_n \in S$, where only finitely many of the
coefficients $s_n$ are different from zero and the multiplication is given by the formula
$(s_1t^{n_1}) \cdot (s_2t^{n_2}) = (s_1\Psi^{n_1}(s_2))t^{n_1 + n_2}$. The \emph{Novikov
  rings} $S_{\Psi}((t))$ and $S_{\Psi}((t^{-1}))$ are the completions of
$S_{\Psi}[t,t^{-1}]$ defined by
\begin{eqnarray*}
  S_{\Psi}((t)) & = & \left\{\sum_{n \in \IZ} s_n t^n \mid \exists N \in \IZ_{\le 0}
                      \;\text{satisfying}\; s_n \not = 0 \implies n \ge N\right\};
  \\
  S_{\Psi}((t^{-1})) & = &  \left\{\sum_{n \in \IZ} s_n t^n \mid \exists N \in \IZ_{\ge 0}
                           \text{satisfying}\; s_n \not = 0 \implies n \le N\right\}.
\end{eqnarray*}
The multiplication is given by the obvious formula and we have
\[
  S_{\Psi}((t)) \cap S_{\Psi}((t^{-1})) = S_{\Psi}[t,t^{-1}].
\]

\begin{lemma}\label{lem:Novikov_rings_and_chain_complex}
  Let $C_*$ be a finite projective $S_{\Psi}[t,t^{-1}]$-chain complex. Denote by $i^*C_*$
  the $S$-chain complex obtained from $C_*$ by restriction with the inclusion
  $i \colon S \to S_{\Psi}[t,t^{-1}]$. Then the following assertions are equivalent:

  \begin{enumerate}
  \item\label{lem:Novikov_rings_and_chain_complex:fin-dom} The $S$-chain complex $i^*C_*$
    is $S$-chain homotopy equivalent to a finite projective $S$-chain complex;

  \item\label{lem:Novikov_rings_and_chain_complex:Novikov} The homology modules
    \[
      H_n(S_{\Psi}((t)) \otimes_{S_{\Psi}[t,t^{-1}]} C_*) \text{ and }   H_n(S_{\Psi}((t^{-1})) \otimes_{S_{\Psi}[t,t^{-1}]} C_*)
    \]
    vanish for every
    $n \in \IZ_{\ge 0}$.

  \end{enumerate}
\end{lemma}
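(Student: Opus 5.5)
We prove the two implications separately, following Ranicki's approach to Novikov homology. Write $R = S_{\Psi}[t,t^{-1}]$, $\widehat{R}_+ = S_{\Psi}((t))$ and $\widehat{R}_- = S_{\Psi}((t^{-1}))$.

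For \ref{lem:Novikov_rings_and_chain_complex:fin-dom} $\implies$ \ref{lem:Novikov_rings_and_chain_complex:Novikov}, suppose $i^*C_*$ is $S$-chain homotopy equivalent to a finite projective $S$-complex $D_*$. The key tool is the standard short exact sequence of $R$-modules
\[
0 \to R \otimes_S i^*C_* \xrightarrow{\;1 - t^{-1}\otimes t\;} R \otimes_S i^*C_* \to C_* \to 0,
\]
the twisted Mayer--Vietoris or mapping torus sequence. It identifies $C_*$ up to $R$-chain homotopy with the algebraic mapping torus of the self-equivalence $h$ of $i^*C_*$ given by multiplication by $t$ (which is $\Psi$-semilinear). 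Transporting along the equivalence to $D_*$, we see that $C_*$ is $R$-chain equivalent to $\cone\bigl(1 - t h_D\colon R\otimes_S D_* \to R \otimes_S D_*\bigr)$, with $h_D$ a $\Psi$-twisted self chain map of the finite complex $D_*$. After tensoring with $\widehat{R}_+$, the map $1 - t h_D$ becomes invertible: its inverse is given by the geometric series $\sum_{k\ge 0} (t h_D)^k$, which converges in $\widehat{R}_+$ because $D_*$ is finitely generated. This is a chain isomorphism, so the cone is contractible and its homology vanishes. The same argument works for $\widehat{R}_-$, writing $1 - t h_D = -t h_D(1 - t^{-1} h_D^{-1})$ and using a homotopy inverse of $h_D$. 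In that case inversion holds only up to chain homotopy, so I would argue in the homotopy category, or replace $h_D$ by a genuine isomorphism via the mapping telescope.

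For \ref{lem:Novikov_rings_and_chain_complex:Novikov} $\implies$ \ref{lem:Novikov_rings_and_chain_complex:fin-dom}, which I expect to be the main obstacle, I would use the fibre square of rings
\[
R \to \widehat{R}_+ \times \widehat{R}_- \rightrightarrows \widehat{R}_{\pm} := S_{\Psi}[[t,t^{-1}]],
\]
where the last term consists of two-sided formal series. Strictly speaking this is an $R$-bimodule rather than a ring. The corresponding short exact sequence is
\[
0 \to R \to \widehat{R}_+ \oplus \widehat{R}_- \to S_{\Psi}[[t,t^{-1}]] \to 0 .
\]
As a left $S$-module, $S_{\Psi}[[t,t^{-1}]] \cong \prod_{\IZ} S$, and $S_{\Psi}[[t,t^{-1}]] \otimes_R C_*$ is the product complex $\prod_{n\in\IZ} t^n i^*C_*$, since $C_*$ is finitely generated projective. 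Because the Novikov homologies vanish, the middle term is acyclic, so we get an $R$-chain homotopy equivalence
\[
\Sigma^{-1} \bigl(S_{\Psi}[[t,t^{-1}]] \otimes_R C_*\bigr) \simeq C_*.
\]

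Now restrict to $S$ and split the product into the pieces $n \ge 0$ and $n < 0$. The plan is then to show that $i^*C_*$ is a homotopy retract of a finite projective $S$-complex. Concretely, choose a finite $S$-subcomplex, such as $\bigoplus_{|n| \le N} t^n \widetilde{C}_*$, where $\widetilde{C}_*$ is a finite $S$-lattice generating $C_*$ over $R$. Then use contractibility of $\widehat{R}_\pm \otimes_R C_*$, which gives chain contractions involving only finitely many powers of $t$ in each degree, to push the identity of $i^*C_*$ into this finite piece. This yields a finite domination of $i^*C_*$ over $S$.

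From here, \cite[Proposition~11.10]{Lueck(1989)}, as invoked in the proof of Theorem~\ref{the:Farrells_fibring_theorem_improved}, together with finite-dimensionality upgrades a finitely dominated complex to one chain homotopy equivalent to a finite projective $S$-chain complex. Finite projectivity is enough here; no freeness is needed. The delicate bookkeeping is in making the contractions in the previous step respect the finite range of $t$-powers. If that becomes messy, I would fall back on citing Ranicki's treatment of finite domination and Novikov homology (Ranicki, \emph{High-dimensional knot theory}, and Ranicki, ``Finite domination and Novikov rings''), which proves exactly this equivalence.
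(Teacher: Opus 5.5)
\textbf{Direction (1)$\Rightarrow$(2) is fine; direction (2)$\Rightarrow$(1) has a genuine gap.}

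Your argument for (1)$\Rightarrow$(2) is sound. Transport the twisted mapping-torus presentation of $C_*$ to $D_*$. Over one Novikov ring the torus map becomes an isomorphism by the geometric series. Over the other, $1-th_D$ is chain homotopic to a chain homotopy equivalence composed with an isomorphism (alternatively, use the mapping torus of $t^{-1}$). For comparison, the paper does not prove the lemma: it cites Ranicki's Theorem~2 for trivial $\Psi$ and asserts that the argument extends to the twisted case.

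The converse (2)$\Rightarrow$(1) carries all the content, and several steps you state are false:
\begin{itemize}
\item $S_\Psi[[t,t^{-1}]]\otimes_R C_*$ is not $\prod_n t^n i^*C_*$. For $C_*=R$ in degree $0$ the former is $\prod_{\IZ}S$ and the latter is $\prod_{\IZ}\bigoplus_{\IZ}S$. With an $S$-lattice $C^0_*$, $C_*=R\otimes_S C^0_*$, you get $\prod_n t^nC^0_*$ degreewise. However, the differential is banded and mixes neighbouring powers of $t$, so the complex does not split into pieces $n\ge 0$ and $n<0$.
\item For the same reason $\bigoplus_{|n|\le N}t^nC^0_*$ is not a subcomplex; this already fails for $\cone(1-t)$.
\item A contraction of $S_\Psi((t))\otimes_R C_*$ is given by matrices over the Novikov ring and involves infinitely many powers of $t$. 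For $\cone(1-t)$ it is multiplication by $\sum_{k\ge 0}t^k$. Truncating it gives $d\Gamma+\Gamma d=1$ only up to an error term near the cut-off.
\end{itemize}
Turning that error into a finite domination is exactly the unproved part you call bookkeeping.

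\textbf{How to close the gap.} Start from the point you did reach: the quasi-isomorphism $C_*\simeq\Sigma^{-1}S_\Psi[[t,t^{-1}]]\otimes_R C_*$. It is only a quasi-isomorphism over $R$, not an $R$-chain homotopy equivalence as you wrote, but that suffices. Then use Brown's criterion: a bounded projective $S$-complex $P_*$ is chain equivalent to one that is finitely generated in each degree if and only if $(\prod_I S)\otimes_S P_*\to\prod_I P_*$ is a quasi-isomorphism for every set $I$.
\begin{itemize}
\item Put $S'=\prod_I S$ with $\Psi$ acting coordinatewise. Then $S'\otimes_S i^*C_*$ is the restriction of $S'_\Psi[t,t^{-1}]\otimes_R C_*$.
\item The Novikov complexes of the latter are base changes along $S_\Psi((t^{\pm1}))\to S'_\Psi((t^{\pm1}))$ of contractible complexes, hence acyclic.
\item Since each $C_n$ is finitely generated projective, $S'_\Psi[[t,t^{-1}]]\otimes_R C_*=\prod_I\bigl(S_\Psi[[t,t^{-1}]]\otimes_R C_*\bigr)$.
\item Products of acyclic complexes are acyclic. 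Comparing the long exact sequences of the two short exact sequences $0\to C\to(\text{Novikov})\to S[[t^{\pm}]]\otimes C\to 0$ therefore gives Brown's condition.
\end{itemize}
Finite-dimensionality then upgrades finite type to a finite projective complex, as you say. If you do not do this, your proposal reduces to citing Ranicki, as the paper does. In that case you should at least explain why the twist by $\Psi$ is harmless, since Ranicki's theorem is stated for untwisted $A[z,z^{-1}]$.
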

\begin{proof} The proof for trivial $\Psi$ can be found
  in~\cite[Theorem~2]{Ranicki(1995Novikovring)} which extends to the twisted case.
\end{proof}

 \begin{example}\label{exa:Novikov_rings_and_group_rings}
   Here is the main example for $S$ and $\Psi$. Consider a group epimorphism
   $\phi \colon G \to \IZ$ with kernel $K$.  Let $y \in G$ be a fixed element which is
   mapped to a generator of $\IZ$ under $\phi$. Let
   $\gamma \colon K \xrightarrow{\cong} K$ be the group automorphism of $K$ given by
   conjugation with $y$.  Then there is an obvious group isomorphism
   \[
     K \rtimes_{\gamma} \IZ \xrightarrow{\cong} G
   \]
   sending $k \in K$ to $k$ and a generator of $\IZ$ to $y$. It induces for every ring $R$
   a ring isomorphism
   \[
     R[K]_{R[\gamma]}[t,t^{-1}] \xrightarrow{\cong} RG.
   \]
 \end{example}

\begin{remark}\label{rem:Use_of_Farrell_Jones_and_Novikov}
  For us the following consequence is relevant. Let $M$ be a connected closed smooth
  manifold of dimension $\ge 6$.  Consider an epimorphism $\phi \colon \pi_1(M) \to \IZ$.
  Let $\overline{M} \to M$ be the infinite cyclic covering associated to $\phi$.  Assume
  that $\pi_1(M)$ is a torsionfree Farrell--Jones group.  Then the following assertions are
  equivalent using the notation of Example~\ref{exa:Novikov_rings_and_group_rings}:
  \begin{itemize}
  \item There is a smooth locally trivial fibre bundle $F \to M \xrightarrow{p} S^1$ of
    closed smooth manifolds such that $\pi_1(p) \colon \pi_1(E) \to \pi_1(S^1)$ coincides
    with $\phi$ under the standard isomorphism $\IZ \xrightarrow{\cong} \pi_1(S^1)$;

  \item The fundamental group $\pi_1(\overline{M})$ is finitely presented and for
    $n \in \IZ_{\ge 0}$ both
    $H_n(\IZ[K]_{\IZ[\gamma]}((t)) \otimes_{\IZ[K]}i^*C_*(\widetilde{M}))$ and
    $H_n(\IZ[K]_{\IZ[\gamma]}((t^{-1})) \otimes_{\IZ[K]}i^*C_*(\widetilde{M}))$ vanish.
  \end{itemize}
\end{remark}


\typeout{--------- Section 2.5: $L^2$-Betti numbers are obstruction to fibring over
  S^1------}

\subsection{\texorpdfstring{$L^2$}{L2}-Betti numbers are an obstruction to fibring over \texorpdfstring{$S^1$}{S1}}%
\label{sec:L2-Betti_numbers_are_obstruction_to_fibring_over_S1}

$L^2$-Betti numbers are obstructions to fibring over $S^1$ as explained next.  Let
$F \to E \to S^1$ be a fibration such that $F$ has the homotopy type of a $CW$-complex of
finite type.  Then $E$ has the homotopy type of a $CW$-complex of finite type and is
actually homotopy equivalent to a mapping torus $T_f$ of a selfmap $f \colon Y \to Y$ of
some $CW$-complex $Y$ which is of finite type and homotopy equivalent to $F$.  This does
imply that the $L^2$-Betti numbers $b_n^{(2)}(\widetilde{X})$ vanish for $n \in \IZ$,
see~\cite[Theorem~6.63 on page~270]{Lueck(2002)}.

An analogous statement holds also for the $\IF_p$-version of the $L^2$-Betti numbers
$b_n(\widetilde{E};\cald_{\IF_p[\pi_1(E)]})$,
see~\cite[Theorem~3.25]{Avramidi-Lueck(2026)}. These numbers are defined based on the work
of Jaikin-Zapirain and systematically studied by Avramidi and
L\"uck~\cite{Avramidi-Lueck(2026)}, provided that $\pi_1(E)$ is a \RALI-group, i.e., is
residually (amenable and locally indicable).

Note that the class of finitely generated RALI-group is larger than the class of finitely
generated \RFRS-groups, since a finitely generated group is \RFRS\ if and ony if it
residually (locally indicable and virtually abelian),
see~\cite[Section~6]{Okun-Schreve(2024orders)}.


\typeout{--------- Section 2.6: \texorpdfstring{$L^2$}{L2}-Betti numbers and finiteness of the homotopy fibre  ------}

\subsection{\texorpdfstring{$L^2$}{L2}-Betti numbers and algebraic
  fibring}\label{sec:L2-Betti_numbers_and_algebraic_fibring}

A group $G$ is \emph{algebraically fibred} if it admits a homomorphism
$\phi \colon G \to \IZ$ whose kernel is finitely generated.  Next we record the celebrated
result of Kielak~\cite[Theorem~5.3]{Kielak(2020fibring)}

\begin{theorem}\label{the:Kielak}
  Let $G$ be an infinite finitely generated virtually $\RFRS$-group.  Then $G$ virtually
  algebraically fibres if and only if its first $L^2$-Betti number
  $b_1^{(2)}(G) = b_1^{(2)}(EG;\caln(G))$ vanishes.
\end{theorem}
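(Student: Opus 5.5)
The plan is to pass to a finite index \RFRS\ subgroup and treat the two directions separately. Choose $H \le G$ of finite index with $H$ \RFRS. Since $b_1^{(2)}$ is multiplicative under finite index, $b_1^{(2)}(H) = [G:H]\cdot b_1^{(2)}(G)$. Hence $b_1^{(2)}(G)=0$ if and only if $b_1^{(2)}(H')=0$ for one, equivalently every, finite index subgroup $H'$. Being virtually algebraically fibred is also a statement about finite index subgroups. So throughout we may replace $G$ by any finite index subgroup that is convenient. Note also that $G$ is infinite, so $H$ is infinite.

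\emph{Easy direction.} Suppose $H' \le G$ has finite index and $\phi \colon H' \to \IZ$ is an epimorphism with finitely generated kernel $K$. Then $H' \cong K \rtimes \IZ$ as in Example~\ref{exa:Novikov_rings_and_group_rings}. Take a $K(K,1)$ with finite $1$-skeleton. The mapping torus of a realisation of the monodromy gives a model for $BH'$ whose $2$-skeleton is, up to homotopy, a mapping torus of a self-map of a complex with finite $1$-skeleton. The mapping torus vanishing result~\cite[Theorem~6.63 on page~270]{Lueck(2002)}, in the version for finite skeleta (only finite $1$-skeleton of the fibre is needed to control degree $1$), gives $b_1^{(2)}(H')=0$. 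Hence $b_1^{(2)}(G)=0$.

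\emph{Hard direction.} This is Kielak's argument. Assume $b_1^{(2)}(H)=0$ with $H$ \RFRS. Since $H$ is torsionfree and satisfies the Atiyah conjecture (\RFRS\ groups are residually torsionfree solvable, so Linnell's theorem applies), there is the Linnell skew field $\cald = \cald_{\IQ H}$. Vanishing of $b_1^{(2)}$ translates into $H_1(H;\cald)=0$. It also gives $H_0(H;\cald)=0$, since $H$ is infinite. Using a presentation, this means a certain matrix $A$ over $\IQ H$, built from the second boundary map, becomes of full rank over $\cald$. On the other side, the Sikorav/Bieri--Neumann--Strebel criterion says $\ker\phi$ is finitely generated if and only if $H_1$ and $H_0$ of $H$ with coefficients in the Novikov rings $\widehat{\IZ H}^{\pm\phi}$ vanish. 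Here, in degree $\le 1$, one may work with $\IQ$ coefficients only up to the known equality of homological and homotopical $\Sigma^1$. So the goal is to transfer invertibility from $\cald$ to a Novikov ring for a suitable character of a finite index subgroup.

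The key steps are as follows.
\begin{enumerate}
\item Use the \RFRS\ tower $H=H_0 \ge H_1 \ge \cdots$ to write $\cald$ as an Ore localisation of twisted group rings $\cald_{\IQ \ker(\alpha_i)}[H_i/\ker\alpha_i]$ over the free abelian quotients.
\item Attach to each element of $\cald$ a Newton polytope, and use the resulting valuation for characters $\phi \colon H_i \to \IR$.
\item Show that after passing far enough down the tower, the relevant determinant-like elements (Dieudonné determinants of the square blocks of $A$) have polytopes whose vertex singled out by some open cone of characters has unit coefficient. These elements then become invertible in the Novikov rings $\widehat{\IQ H_i}^{\pm\phi}$ for $\phi$ in that cone.
\end{enumerate}

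Choosing $\phi$ rational, hence a multiple of an epimorphism to $\IZ$, in this open cone, we get vanishing of the Novikov homology for both $\pm\phi$ in degrees $0,1$. Sikorav's theorem then gives that $\ker\phi \le H_i$ is finitely generated.

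The main obstacle is step~(3): controlling the polytopes along the \RFRS\ tower. What I would try first is Kielak's trick. At each stage, either the polytope of the relevant element is a single point, so it is a unit and we are done, or \RFRS\ provides a further finite index subgroup whose abelianisation separates a vertex. This strictly improves a complexity, for instance the number of non-unit vertex coefficients, and so the process terminates. Everything else is bookkeeping with Ore localisations and the openness of $\Sigma^1$.
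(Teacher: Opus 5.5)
The paper does not prove Theorem~\ref{the:Kielak}. It quotes the result from Kielak, so the benchmark is Kielak's argument, and you reproduce its framework: Linnell's skew field, polytopes along the \RFRS\ tower, and Sikorav's criterion. Your easy direction is fine. The $2$-skeleton remark is superfluous: $BH'$ is itself a mapping torus over a $K(K,1)$ with finite $1$-skeleton, and the degree-wise mapping torus theorem only needs the fibre's $L^2$-Betti numbers in degrees $0$ and $1$ to be finite. The hard direction, however, has a genuine gap in step~(3), in two places.

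\emph{Two-sidedness.} The vertex singled out by $\phi$ is the $\phi$-minimal one. A good coefficient there can at best give invertibility over $\widehat{\IQ H_i}^{\phi}$. Over $\widehat{\IQ H_i}^{-\phi}$ the relevant vertex is the $\phi$-maximal one, and your step says nothing about it. So invertibility over $\widehat{\IQ H_i}^{\pm\phi}$ on one open cone does not follow, while finite generation of $\ker\phi$ needs both signs. You must either control the maximal vertex on the same set of characters, or argue as Kielak does; this is the use of openness alluded to in the proof of Addendum~\ref{adden:Fisher}. The argument is as follows.
\begin{enumerate}
\item Prove the one-sided statement with a closure clause: for every prescribed character $\phi$ there are $i$ and a non-empty open $U\subseteq H^1(H_i;\IR)$ with $\phi|_{H_i}\in\overline{U}$ and $U\subseteq\Sigma^1(H_i)$.
\item Apply it to $\phi$ and pick $\psi\in U$.
\item Apply it again to $-\psi$ to get an open $V\subseteq\Sigma^1(H_j)$ with $-\psi|_{H_j}\in\overline{V}$.
\item Since $\Sigma^1(H_j)$ is open and contains $\psi|_{H_j}$, it meets $-V$. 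This gives a rational $\chi$ with $\pm\chi\in\Sigma^1(H_j)$.
\end{enumerate}
Your ``some open cone'' drops exactly the closure clause this needs.

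\emph{Termination.} A one-point polytope in $H_1(H_i;\IR)$ does not make an element a unit. It only means the element is $c\,h$ with $c$ supported in $\ker\alpha_i$. For example, $1+k$ with $1\ne k\in\ker\alpha_i$ is invertible in no $\widehat{\IQ H_i}^{\phi}$: every $\phi$ kills $k$, and $\IQ H_i$ has only trivial units because $H_i$ is locally indicable. This is exactly why one must descend the tower. Nor is ``the number of non-unit vertex coefficients'' a complexity that visibly decreases. Passing to $H_{i+1}$ turns elements into matrices over $\IQ H_{i+1}$, whose polytopes live in $H_1(H_{i+1};\IR)$ and can have many more vertices.

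What does terminate is the descent along a single vertex. Take $x\in\IQ H$ and generic characters $\phi_m$ of $H_m$. The support of the iterated $(\phi_0,\dots,\phi_m)$-minimal coefficient lies in one coset of $\ker\alpha_m\le H_{m+1}$. It shrinks strictly whenever it meets two cosets of $\ker\alpha_{m+1}$, so $\bigcap_m H_m=1$ forces a single group element after finitely many steps. The lexicographic perturbations $\phi_0+\epsilon_1\phi_1+\dots+\epsilon_n\phi_n$, restricted to $H_n$, then give the open set with $\phi_0$ in its closure.

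Dieudonn\'e determinants are the wrong objects for this. They live in $\cald$, where every non-zero element is a unit and there is no finite support to exhaust. It is also unclear how a statement about them would yield invertibility of $A$ over the non-division ring $\widehat{\IQ H_i}^{\phi}$. This descent is where \RFRS, rather than merely residually (torsionfree and nilpotent), is used (cf.\ Remark~\ref{rem:RFRS_cannot_ve_replaced_by_RFRS}). It is precisely the part your proposal leaves open.
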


This was extended by Fisher~\cite[Theorem~A]{Fisher(2024)} to higher dimensions.

\begin{theorem}\label{the:Fisher_Q}
  Let $n \in \IZ_{\ge 1}$.  Let $G$ be a virtually $\RFRS$-group of type
  $\FP_n(\IQ)$. Then the following are equivalent:

  \begin{enumerate}
  \item\label{the:Fisher:(1)} We have $b_p^{(2)}(G) = $ for $p \le n$;
  \item\label{the:Fisher_Q:(2)} There is a finite-index subgroup $H\subseteq G$ and an
    epimorphism $\phi \colon H \to \IZ$ such that $\ker(\phi)$ is of type $\FP_n(\IQ)$;

  \item\label{the:Fisher_Q:(3)} There is a finite-index subgroup $H' \subseteq G$ and an
    epimorphism $\phi' \colon H' \to \IZ$ such that $b_p(H') < \infty $ holds for
    $p \le n$.
  \end{enumerate}
\end{theorem}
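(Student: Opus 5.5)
The statement is Fisher's theorem, recorded here for later use. I will not re-derive it from scratch. Instead I would follow Fisher's proof strategy, which runs through Linnell skew fields and the Jaikin-Zapirain theory of \RFRS\ towers. I prove the implications in the order \ref{the:Fisher_Q:(2)} $\implies$ \ref{the:Fisher_Q:(3)} $\implies$ \ref{the:Fisher:(1)} $\implies$ \ref{the:Fisher_Q:(2)}. Throughout, I first pass to a finite-index \RFRS\ subgroup. $L^2$-Betti numbers scale by the index, and $\FP_n(\IQ)$ is inherited by finite-index subgroups, so this reduction is harmless.

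For \ref{the:Fisher_Q:(2)} $\implies$ \ref{the:Fisher_Q:(3)}: if $\ker(\phi)$ is of type $\FP_n(\IQ)$, then its rational homology up to degree $n$ is finite-dimensional. I read (3) as saying that the Betti numbers of the kernel of $\phi'$ are finite. Then (3) holds with $H' = H$.

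For \ref{the:Fisher_Q:(3)} $\implies$ \ref{the:Fisher:(1)}: take a partial resolution of $\IQ$ over $\IQ H'$ that is finitely generated up to degree $n$. Write $\IQ H' = \IQ[K]_{\gamma}[t,t^{-1}]$ as in Example~\ref{exa:Novikov_rings_and_group_rings}. The homology of $K = \ker(\phi')$ is then a module over $\IQ[t^{\pm1}]$ that is finite-dimensional over $\IQ$ in degrees $\le n$, hence torsion. I then run the mapping-torus argument of~\cite[Theorem~6.63]{Lueck(2002)}, approximating $\caln(H')$-dimensions through the finite cyclic covers $\phi'^{-1}(m\IZ)$. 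The $L^2$-Betti numbers of $H'$ are bounded by $\dim_\IQ H_p(K;\IQ)/m$, which tends to $0$. This gives $b_p^{(2)}(H')=0$ for $p\le n$, and hence the same for $G$.

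The main step is \ref{the:Fisher:(1)} $\implies$ \ref{the:Fisher_Q:(2)}, and I expect it to be the main obstacle. First I use that $\cald_{\IQ G}$, the Linnell skew field, exists and is the Hughes-free division ring. Then the vanishing of $b^{(2)}_p$ for $p\le n$ is equivalent to $H_p(G;\cald_{\IQ G}) = 0$ for $p\le n$. Next I use Kielak's description of $\cald_{\IQ G}$ for \RFRS\ $G$ as an iterated Ore localisation along a \RFRS\ tower. This shows that vanishing over $\cald_{\IQ G}$ forces, on a finite-index $H$, vanishing of homology with coefficients in the Novikov completion $\widehat{\IQ H}^{\phi}$ for $\phi$ in an open cone of characters. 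The route is a ``semi-rationality'' argument: the finitely many matrices witnessing acyclicity over $\cald$ up to degree $n$ become invertible over the Novikov ring for $\phi$ near a suitably chosen rational character. The key technical point is to pass the chain-level identities down the tower while controlling the supports (Newton polytopes) of their entries. I would choose the character $\phi$ to avoid finitely many hyperplanes, so that it works simultaneously for $\phi$ and $-\phi$. Finally, vanishing of the Novikov homology in both directions up to degree $n$ gives $\FP_n(\IQ)$ for $\ker(\phi)$. This is the algebraic Sikorav/Bieri--Renz criterion, the degree-truncated analogue of Lemma~\ref{lem:Novikov_rings_and_chain_complex}.
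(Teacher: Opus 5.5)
\textbf{Verdict: the proposal has a genuine gap in the step (3)$\Rightarrow$(1).}

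The paper gives no proof of this statement; it simply quotes Fisher's Theorem~A. The quotation has misprints: (1) should read $b_p^{(2)}(G)=0$, and (3) should concern the Betti numbers of $\ker(\phi')$, as you assumed. Your (2)$\Rightarrow$(3) is fine, and your (1)$\Rightarrow$(2) is a fair outline of the Kielak--Fisher argument.

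The problem is your bound $b_p^{(2)}(G_m)\le\dim_{\IQ}H_p(K;\IQ)$ for $G_m=\phi'^{-1}(m\IZ)$, which has no justification. L\"uck's mapping torus argument bounds $b_p^{(2)}(G_m)$ in one of two ways. Either it counts cells of a finite-type model for the fibre. Or, via the Wang sequence and $\dim_{\caln(G_m)}(\caln(G_m)\otimes_{\caln(K)}M)=\dim_{\caln(K)}M$, it bounds it by $b_p^{(2)}(K)+b_{p-1}^{(2)}(K)$. Ordinary rational Betti numbers of $K$ control neither quantity.

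A concrete counterexample to your bound: let $G=\IZ/2*\IZ$ and let $\phi$ kill $\IZ/2$. Then $K$ is a free product of copies of $\IZ/2$, so $H_p(K;\IQ)=0$ for $p\ge 1$. Yet $b_1^{(2)}(G_m)=m/2$, although the kernel is $K$ for every $m$.

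Your argument never uses the \RFRS\ hypothesis, and torsion-freeness alone does not rescue it. Take $G=A*\IZ$ with $A$ Higman's group, which is torsionfree, infinite and perfect. Then $H_1(K;\IQ)=0$, but $b_1^{(2)}(G)=b_1^{(2)}(A)+1\ge 1$.

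What your torsion observation actually gives is
$H_p(H';\IQ(t))\cong\IQ(t)\otimes_{\IQ[t^{\pm1}]}H_p(K;\IQ)=0$ for $p\le n$, where $\IQ[H']\to\IQ(t)$ is induced by $\phi'$. The missing ingredient is a rank comparison. Suppose every matrix over $\IQ[H']$ has rank over $\IQ(t)$ at most its rank over the Linnell skew field $\cald_{\IQ[H']}$. Then
$b_p^{(2)}(H')=\dim_{\cald_{\IQ[H']}}H_p(H';\cald_{\IQ[H']})\le\dim_{\IQ(t)}H_p(H';\IQ(t))=0$.
For $H'$ \RFRS\ this rank comparison holds, for example because $\cald_{\IQ[H']}$ is then the universal division ring of fractions of $\IQ[H']$ (Jaikin-Zapirain). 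This is where the group structure genuinely enters.

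This requires $H'$ \emph{itself} to be \RFRS. Your preliminary reduction to a finite-index \RFRS\ subgroup is not harmless in this direction, because finiteness of the Betti numbers of the kernel does not pass to finite-index subgroups. In $\IZ/2*\IZ$, the index-two free subgroup of rank two has restricted kernel with infinite first Betti number. In fact $\IZ/2*\IZ$ is virtually \RFRS\ and of type $\FP_\infty(\IQ)$, satisfies (3) in your reading with $H'=G$, and violates (1).

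So (3) has to be read in one of two ways. Either $H'$ is required to be \RFRS. Or (3) is stated with $L^2$-Betti numbers of $\ker(\phi')$, in which case your covering argument, with the correct bound $b_p^{(2)}(K)+b_{p-1}^{(2)}(K)$, does prove (3)$\Rightarrow$(1).
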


There is also the following version for an arbitrary field $F$ due to
Fisher~\cite[Theorem~B]{Fisher(2024)}.

 \begin{theorem}\label{the:Fisher_F}
   Let $F$ be a field and let $n \in \IZ_{\ge 1}$.  Let $G$ be a virtually $\RFRS$-group
   of type $\FP_n(F)$. The following are equivalent:

   \begin{enumerate}
   \item\label{the:Fisher_F:(1)} We have $b_p(G;\cald_{F[G]}) = 0$ for $p \le n$;
   \item\label{the:Fisher_F:(2)} There is a finite-index subgroup $H\subseteq G$ and an
     epimorphism $\phi \colon H \to \IZ$ such that $\ker(\phi)$ is of type $\FP_n(F)$;

   \item\label{the:Fisher_F:(3)} There is a finite-index subgroup $H' \subseteq G$ and an
     epimorphism $\phi' \colon H' \to \IZ$ such that $b_p(H';F)< \infty$ holds for
     $p \le n$.
   \end{enumerate}
 \end{theorem}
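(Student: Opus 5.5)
The plan is to prove the cycle of implications \ref{the:Fisher_F:(2)} $\Rightarrow$ \ref{the:Fisher_F:(3)} $\Rightarrow$ \ref{the:Fisher_F:(1)} $\Rightarrow$ \ref{the:Fisher_F:(2)}. All three conditions pass to finite-index subgroups: $\FP_n(F)$ is inherited by finite-index subgroups, and $\cald_{F[\cdot]}$-Betti numbers scale by the index. So in each step we may replace $G$ by a finite-index \RFRS\ subgroup. Throughout, fix a resolution $P_*\to F$ over $F[G]$ with $P_0,\dots,P_n$ finitely generated free. I read \ref{the:Fisher_F:(3)} as the statement that the kernel $K'=\ker(\phi')$ satisfies $\dim_F H_p(K';F)<\infty$ for $p\le n$.

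\emph{Step 1: \ref{the:Fisher_F:(2)} $\Rightarrow$ \ref{the:Fisher_F:(3)}.} If $K=\ker(\phi)$ is of type $\FP_n(F)$, then $H_p(K;F)$ is computed by a complex that is finitely generated in degrees $\le n$. Hence it is finite-dimensional for $p\le n$.

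\emph{Step 2: \ref{the:Fisher_F:(3)} $\Rightarrow$ \ref{the:Fisher_F:(1)}.} This is the algebraic mapping-torus argument, the analogue of \cite[Theorem~6.63]{Lueck(2002)} and \cite[Theorem~3.25]{Avramidi-Lueck(2026)}. Write $F[H']=F[K']_{F[\gamma]}[t,t^{-1}]$ as in Example~\ref{exa:Novikov_rings_and_group_rings}. The skew field $\cald_{F[H']}$ contains the twisted rational function field over $\cald_{F[K']}$, which is the Ore localization of $\cald_{F[K']}[t^{\pm1}]$. Finiteness of $H_p(K';F)$ says $H_p(F[H']\otimes P_*)$ is finite-dimensional over $F$ for $p\le n$. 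A module over the twisted Laurent ring that is finite-dimensional over $F$ is annihilated by a nonzero polynomial in $t$. That polynomial becomes invertible in the skew field, so these homology groups die after tensoring with $\cald_{F[H']}$. A dimension count over the skew field, truncating the complex in degree $n+1$, then gives $b_p(H';\cald_{F[H']})=0$ for $p\le n$. The same then holds for $G$, after dividing by the index.

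\emph{Step 3: \ref{the:Fisher_F:(1)} $\Rightarrow$ \ref{the:Fisher_F:(2)}.} This is the main obstacle, and I follow Kielak's strategy. First, by the Bieri--Renz--Sikorav criterion, the kernel of $\phi\colon H\to\IZ$ is of type $\FP_n(F)$ if and only if $H_p(\widehat{F H}^{\pm\phi}\otimes_{FH} P_*)=0$ for $p\le n$. This is the $\FP_n$, field-coefficient analogue of Lemma~\ref{lem:Novikov_rings_and_chain_complex}. So it suffices to find a finite-index $H$ and a character $\phi$ with these Novikov homologies vanishing.

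Second, use the \RFRS\ tower $G=G_0\supseteq G_1\supseteq\cdots$. Jaikin-Zapirain's description exhibits $\cald_{F[G]}$ as a directed union of Ore localizations of twisted group rings $\cald_{F[\ker \alpha_i]}[G_i^{ab,free}]$. Vanishing of $\cald$-homology in degrees $\le n$ gives explicit chain contractions of the truncated complex over $\cald_{F[G]}$. Only finitely many matrix entries are involved, so these contractions already live at some finite stage $i$. There they involve inverting finitely many elements of a twisted Laurent polynomial ring in several variables.

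Third, pick a character $\psi$ of $G_i$ that is generic with respect to the Newton polytopes of these finitely many elements. Then each such element is invertible in the Novikov completion for both $\psi$ and $-\psi$, since its $\psi$-leading term is a unit in $\cald$. The delicate point is the inductive "rich character" argument: the leading coefficients lie in $\cald_{F[\ker]}$ of a smaller subgroup, and one must pass down the tower to ensure they are genuinely invertible in the Novikov ring over $F$. This is exactly where the \RFRS\ hypothesis is used. The set of such $\psi$ is open, so it contains a rational, hence integral after scaling, character. This produces the contraction over $\widehat{F G_i}^{\pm\psi}$, and hence \ref{the:Fisher_F:(2)} with $H=G_i$.
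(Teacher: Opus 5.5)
The paper gives no proof of this statement; it quotes it from Fisher's Theorem~B, so your proposal has to stand on its own. Step~1 is fine. Step~3 follows the Kielak--Fisher strategy, but only in outline: the point you call "delicate" is the entire content of their work. A nonzero $\psi$-leading coefficient in $\cald_{F[\ker\psi]}$ only gives invertibility over a Novikov completion with $\cald$-coefficients. Sikorav's criterion, however, concerns $\widehat{F[H]}^{\pm\psi}$, whose leading coefficients lie in $F[\ker\psi]$. Moreover, a chain contraction over $\cald_{F[G]}$ has entries in $\cald_{F[G]}$, not in $F[G]$. So you must first encode $\cald$-acyclicity in degrees $\le n$ by finitely many matrices over $F[G]$ (e.g.\ invertible square submatrices of the differentials). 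Then you must prove that these become invertible over $\widehat{F[H]}^{\pm\psi}$ for suitable $H$ and $\psi$.

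The genuine error is in Step~2, together with your claim that all three conditions pass to finite-index subgroups. The module annihilated by a polynomial in $t$ is $H_p(K';F)=H_p(F[H'/K']\otimes_{F[H']}P_*)$, a module over the commutative ring $F[t^{\pm1}]$. The $\cald_{F[H']}$-homology is not a base change of it. Indeed $\cald_{F[H']}\otimes_{F[H']}F[H'/K']=\cald_{F[H']}\otimes_{F[K']}F=0$ whenever $K'\neq 1$, since $k-1$ is invertible in $\cald_{F[H']}$. The mapping-torus argument needs $H_p(K';\cald_{F[K']})$ to be finite-dimensional over $\cald_{F[K']}$, hence torsion over $\cald_{F[K']}[t^{\pm1}]$ and killed by the Ore localisation. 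This holds if $K'$ is of type $\FP_n(F)$, so your argument does give (2)$\Rightarrow$(1). It does not follow from finiteness of the ordinary Betti numbers of an infinitely generated $K'$.

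Under your reading, (3)$\Rightarrow$(1) is in fact false as stated. Take $G=\IZ*A_5$, which is virtually free, hence virtually \RFRS\ and of type $\FP_1(F)$. The epimorphism $G\to\IZ$ killing $A_5$ has kernel $\ast_{\IZ}A_5$, with $H_0=F$ and $H_1(-;F)=0$ for every field $F$. Yet $b_1$ of $G$ equals $59/60$: this is the $L^2$-Betti number in characteristic zero, or in general $b_1$ of a free subgroup of finite index divided by the index. The hypothesis also does not survive passage to a torsion-free finite-index subgroup, since such subgroups of $\ast_{\IZ}A_5$ are free of infinite rank.

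To rescue (3)$\Rightarrow$(1) you have two options. You can require $H'$ to be torsion-free and supply a genuine rank comparison between $\cald_{F[H']}$ and $F(t)$ along the specialisation $F[H']\to F[t^{\pm1}]$ induced by $\phi'$. Alternatively, read (3) as finiteness of the $\cald$-Betti numbers of $\ker\phi'$; then your argument works with $H_p(K';\cald_{F[K']})$ in place of $H_p(K';F)$.
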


It was shown by Fisher--Italiano--Kielak that one can also combine the finiteness properties of
the kernel~\cite[Theorem~5.3]{Fisher-Italiano-Kielak(2025)}.  Here is a variation of their
result.

Consider any finite free $\IZ[G]$-chain complex $C_*$, any prime $p$, and any field $F$ of
characteristic $p$.  Let $\IF_p$ be the finite field consisting of $p$-elements.  Then
\begin{eqnarray*}
  b_n^{(2)}(C_*;\cald_{F[G]})  & = & b_n^{(2)}(C_*;\cald_{\IF_p[G]});
  \\
  b_n^{(2)}(C_*;\caln(G)) & \le &b_n^{(2)}(C_*;\cald_{\IF_p[G]}),
\end{eqnarray*}
and there is a finite set of primes $\calp_{C_*}$ (depending on $C_*$) such that for every
$p \not \in \calp_{C_*}$ we get
$b_n^{(2)}(C_*;\caln(G)) = b_n^{(2)}(C_*;\cald_{\IF_p[G]})$, see~\cite[Lemma~3.9 and
Theorem~4.26]{Avramidi-Lueck(2026)}.  If $\IF_p[G] \otimes_{\IZ[G]} C_*$ is of finite
$n$-type over $\IF_p$ then $F[G] \otimes_{\IZ[G]} C_*$ is of finite $n$-type over $F$.
Hence the chain complex version of Theorem~\ref{the:Fisher_F} implies the next result.

\begin{theorem}\label{the:FIK_new}
  Let $G$ be a virtually $\RFRS$ group of type $\FP_{n+1}$.  Then there exists a finite
  set of primes $\calp_G$ with the following properties:

  \begin{enumerate}
  \item\label{the:FIK_new:calp_G} The following statements are equivalent:
    \begin{enumerate}
    \item\label{the:FIK_new:calp_G:vanishing}We have $b_k(G;\cald_{\IF_p[G]})=0$ for
      $k\le n$ and $p \in \calp_G$;
    \item\label{the:FIK_new:calp_G:type}There exists a finite-index subgroup
      $H\subseteq G$ and an epimorphism $\phi\colon H \to \IZ$ such that $\ker\phi$ is
      type $\FP_n(F)$ for every field $F$;
    \end{enumerate}
  
  \item\label{the:FIK_new:calp} Let $\calp$ be a any non-empty set of primes. Then the
    following statements are equivalent:
    \begin{enumerate}
    \item\label{the:FIK_new:calp:vanishing} We have $b_k(G;\cald_{\IF_p[G]})=0$ for
      $k\le n$ and $p \in \calp$;
    \item\label{the:FIK_new:calp:type} There exists a finite-index subgroup $H\subseteq G$
      and an epimorphism $\phi\colon H \to \IZ$ such that $\ker\phi$ is type $\FP_n(F)$
      for every field $F$ of characteristic $p$ satisfying $p \in \calp$  or $p \notin \calp_G$.
    \end{enumerate}
  \end{enumerate}
\end{theorem}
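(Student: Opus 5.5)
We deduce Theorem~\ref{the:FIK_new} from the chain complex version of Theorem~\ref{the:Fisher_F}, applied to one fixed finite free resolution. Since $G$ is of type $\FP_{n+1}$ (over $\IZ$), choose a projective $\IZ[G]$-resolution $P_*$ of $\IZ$ with $P_k$ finitely generated free for $k \le n+1$. Truncate it to get a finite free $\IZ[G]$-chain complex $C_*$ that agrees with $P_*$ in degrees $\le n+1$. Then for every field $F$ and every $k \le n$ we have $b_k(G;\cald_{F[G]}) = b_k^{(2)}(C_*;\cald_{F[G]})$ and $b_k^{(2)}(G) = b_k^{(2)}(C_*;\caln(G))$. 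Moreover, $\ker\phi$ is of type $\FP_n(F)$ if and only if the restriction of $F \otimes_\IZ C_*$ to $\ker\phi$ is of finite $n$-type over $F$.

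We set $\calp_G := \calp_{C_*}$, the finite set of primes from the discussion before the theorem. For $p \notin \calp_G$ that discussion gives $b_k^{(2)}(C_*;\caln(G)) = b_k^{(2)}(C_*;\cald_{\IF_p[G]})$. It also gives $b_k(C_*;\cald_{F[G]}) = b_k(C_*;\cald_{\IF_p[G]})$ whenever $\mathrm{char} F = p$. For characteristic $0$ we use $\caln(G)$, equivalently the Linnell skew field $\cald_{\IQ[G]}$ for \RFRS\ groups.

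For \ref{the:FIK_new:calp}, the direction (b)$\Rightarrow$(a) is the easy implication. If $\ker\phi \subseteq H$ is of type $\FP_n(\IF_p)$, then $H$ is an extension of $\ker\phi$ by $\IZ$. The mapping torus argument over $\IF_p$ (\cite[Theorem~3.25]{Avramidi-Lueck(2026)}, and Theorem~\ref{the:Fisher_F} (2)$\Rightarrow$(1)) gives $b_k(H;\cald_{\IF_p[H]})=0$ for $k \le n$. These numbers scale by the index, so $b_k(G;\cald_{\IF_p[G]}) = 0$ for $p \in \calp$.

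For (a)$\Rightarrow$(b), first note that $b_k(G;\cald_{\IF_p[G]}) = 0$ for some $p \in \calp$ forces $b_k^{(2)}(G) = 0$, by the inequality $b_k^{(2)} \le b_k(\,\cdot\,;\cald_{\IF_p})$. Hence $b_k(G;\cald_{\IF_q[G]})=0$ for all $q \notin \calp_G$ as well. So the vanishing holds over the finite-or-cofinite-complement set $\calq = \calp \cup (\text{primes} \setminus \calp_G)$ and over $\IQ$.

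We now need a \emph{single} finite-index $H$ and a \emph{single} $\phi$ that work simultaneously for all these characteristics. This is exactly what the Fisher--Italiano--Kielak argument \cite[Theorem~5.3]{Fisher-Italiano-Kielak(2025)} provides. The fibred classes in Fisher's proof are characterized by vanishing of Novikov homology of $C_*$, and for each characteristic they form an open subset of $H^1(H;\IR)$. Kielak's \RFRS\ machinery produces a finite-index $H$ in which these open sets are nonempty. We will need that for the finitely many relevant rings $\IQ$ and $\IF_p$ with $p \in \calp_G \cap \calp$, one can pass to a common deeper finite-index subgroup where all these open sets contain a common rational class. For the primes $q \notin \calp_G$ there are infinitely many, so we cannot treat them one at a time. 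Instead we argue that the $\IF_q$-Novikov homology of $C_*$ in degrees $\le n$ vanishes once the $\IZ$-Novikov homology, localized away from $\calp_G$, vanishes. The latter is implied by the characteristic-$0$ vanishing together with the finiteness of $\calp_{C_*}$, which controls the primes where torsion can appear in finitely many matrices. Then Lemma~\ref{lem:Novikov_rings_and_chain_complex}, in its finite $n$-type variant over each field $F$, converts Novikov vanishing into finite $n$-type of the restricted complex, i.e.\ $\ker\phi$ is of type $\FP_n(F)$.

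Part \ref{the:FIK_new:calp_G} is the special case $\calp = \calp_G$ (we may enlarge $\calp_G$ to be nonempty). For this $\calp$, the set $\calp \cup (\text{primes}\setminus\calp_G)$ is all primes. The characteristic-$0$ case then follows from the inequality as above.

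The main obstacle is the uniformity step: choosing one $\phi$ that works for infinitely many primes at once. We expect to handle it by working in the Novikov ring over $\IZ[1/N]$, where $N$ is the product of the primes in $\calp_G$. Vanishing there follows from rational vanishing after inverting finitely many primes, since the relevant Novikov chain complex in degrees $\le n+1$ is built from finitely many finite matrices.
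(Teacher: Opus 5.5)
Your set-up matches the paper's. You take $\calp_G=\calp_{C_*}$ for a truncated free resolution. You use $b_k^{(2)}(C_*;\caln(G))\le b_k(C_*;\cald_{\IF_p[G]})$, with equality for $p\notin\calp_G$, to spread vanishing from one $p\in\calp$ to all $q\notin\calp_G$ and to $\caln(G)$. You get (b)$\Rightarrow$(a) from the mapping torus argument. All of that is fine.

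\textbf{The gap.} It is the uniformity step you flag yourself, and the mechanism you propose for it does not work. You want one $(H,\phi)$ to serve every $q\notin\calp_G$ by showing that the Novikov homology over $\IZ[1/N][K]$, with $N=\prod_{p\in\calp_G}p$, vanishes. You claim this follows from rational Novikov vanishing because $\calp_{C_*}$ controls where torsion can appear. It does not, for three reasons.
\begin{enumerate}
\item $\calp_{C_*}$ is defined by comparing dimensions over the skew fields $\caln(G)$ and $\cald_{\IF_p[G]}$. It carries no information about the Novikov homology of $C_*$ restricted to a particular finite-index $H$ for a particular $\phi$.
\item The primes one would have to invert depend on $(H,\phi)$, which are chosen only after $\calp_G$ is fixed. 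So you also cannot simply enlarge $\calp_G$ to absorb them.
\item ``Finitely many matrices'' does not bound denominators. A witness for acyclicity over $\IQ[K]_{\IQ[\gamma]}((t))$ consists of infinite series, and this ring is strictly larger than $\IQ\otimes_{\IZ}\IZ[1/N][K]_{\IZ[\gamma]}((t))$ for every $N$.
\end{enumerate}

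Lemma~\ref{lem:properties_of_C_ast} shows the failure concretely. Take the complex with differential $3g+2e$ over a \RFRS\ group, with $g\neq e$.
\begin{itemize}
\item All its $L^2$-Betti numbers vanish, over $\caln(G)$ and over every $\cald_{F[G]}$. So $\calp_{C_*}=\emptyset$ is a legitimate choice.
\item For $\phi(g)\neq 0$ its Novikov homology vanishes over $\IQ$ and over every $\IF_p$.
\item Over $\IZ$ it is non-zero for every $\phi$ on every finite-index subgroup. It vanishes over $\IZ[1/N]$ only when both $2$ and $3$ are inverted.
\end{itemize}
Hence ``rational vanishing plus finiteness of $\calp_{C_*}$ implies vanishing over $\IZ[1/N]$'' is false. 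Moreover, even when some $N$ works for a given $(H,\phi)$, its prime divisors need not lie in $\calp_G\cup\calp$, so you would still not reach (b).

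\textbf{What the paper does instead.} It obtains the simultaneous statement as a variation of the Fisher--Italiano--Kielak combination result (their Theorem~5.3), via the chain-complex version of Theorem~\ref{the:Fisher_F}. It combines this with the Avramidi--L\"uck comparisons and the observation that finite $n$-type over $\IF_p$ base-changes to finite $n$-type over every field of characteristic $p$. That base change is how arbitrary fields, not just prime fields, are covered; your proposal only handles each field separately via Novikov rings.

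Your finite-combination step is also only asserted. You claim there is a common deeper finite-index subgroup on which the good open sets for $\IQ$ and for finitely many $\IF_p$ share a rational class. Openness of each such set does not by itself force them to intersect, so this step too has to come from the Fisher--Italiano--Kielak argument rather than from the general facts you invoke.
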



\typeout{-------------------- Section 2.G: BNSR Invariants --------------------}

\subsection{BNSR invariants of groups and spaces}

Let $G$ be a finitely generated group and define
$S(G)=\hom(G;\IR)\smallsetminus\{0\}$. Note that we are always considering $\hom(G,\IR)$
with the usual topology (in fact, since our group $G$ is finitely generated, this is
homeomorphic to $\IR^n$ for some $n$). Given $\phi\in S(G)$, define a submonoid of $G$ by
\[{G}_{\phi}\coloneqq\left\{ g\in G : \phi(g)\geq 0\right\}.\]

Let $C_\ast$ be a chain complex of $RG$-modules and let $k\in\IN$.  Note that we may view
$C_\ast$ as a chain complex of $RG_\varphi$-modules by restriction.  We define
\[\Sigma^k(C_\ast;R)\coloneqq\{\varphi\in S(G) : C_\ast \text{ is
    of finite $k$-type over }RG_\varphi\}.\]

Let $X$ be a connected CW complex with $C_\ast(X)$ of finite $n$-type over $R$.  For
$k\leq n$ the \emph{$k$-th (homological) BNSR invariant of $X$ over $R$} is defined to be
\[\Sigma^k(X;R)\coloneqq \Sigma^k(C_\ast(\widetilde{X};R);R), \]
where $\widetilde{X}$ is the universal cover of $X$, and $C_\ast(-;R)$ denotes the
cellular chain complex with coefficients in $R$.  For a group $G$ of type $\FP_n(R)$ we
define
\[\Sigma^k(G;R)\coloneqq \Sigma^k(C_\ast(EG;R);R).\]

The following theorem essentially combines Theorem~4, Proposition~3, and Corollaries~1
and~2 of~\cite{Farber-Geoghegan-Schuetz(2010)} but stated over a general ring, see
also~\cite[Theorem~2.16]{Hughes-Kielak(2024)}.

\begin{theorem}[Basic properties of homological BNSR invariants]\label{the:props_BNSR}
  Let $R$ be a ring.  Let $X$ be a connected CW complex with $C_\ast(X)$ finite $n$-type
  over $R$ and let $G=\pi_1(X)$.  The following conclusions hold:
  \begin{enumerate}
  \item $\Sigma^n(X;R)$ are open subsets of $S(G)$;
  \item If $\widetilde{X}$ is $n$-connected then
    \[\Sigma^n(X;R)=\Sigma^n(G;R) \text{ and
      }\Sigma^{n+1}(X;R)\subseteq\Sigma^{n+1}(G;R);\]
  \item If $X$ is finite, then for all $n\geq\dim X$ we have
    $\Sigma^n(X;R)=\Sigma^{\dim X}(X;R)$.
  \end{enumerate}
\end{theorem}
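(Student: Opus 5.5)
The plan is to reduce all three statements to a chain-level characterisation of $\Sigma^k(C_*;R)$, essentially the one in~\cite{Farber-Geoghegan-Schuetz(2010)}, carried over to an arbitrary coefficient ring $R$. Let $C_*$ be a projective $RG$-chain complex of finite $n$-type. Replace it up to chain homotopy by a complex $F_*$ with $F_i$ finitely generated free for $i\le n$.

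For $k\le n$ the characterisation has two equivalent forms.
\begin{itemize}
\item \emph{Bieri--Renz form.} $\varphi\in\Sigma^k(C_*;R)$ if and only if there is a chain map $f_*\colon F_*\to F_*$, chain homotopic to the identity in degrees $\le k$, with $v_\varphi(f_i)>0$ for $i\le k$. Here $v_\varphi$ of a matrix over $RG$ is the minimum of $\varphi(g)$ over the group elements $g$ occurring in its entries.
\item \emph{Sikorav form.} $\varphi\in\Sigma^k(C_*;R)$ if and only if $H_i(\widehat{RG}_{-\varphi}\otimes_{RG}C_*)=0$ for $i\le k$. Here $\widehat{RG}_{-\varphi}$ is the Novikov completion, of the kind introduced earlier in the paper in the case of $\IZ$.
\end{itemize}
I would prove this by following the argument of~\cite{Farber-Geoghegan-Schuetz(2010)} verbatim. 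Only the finite generation of the free modules $F_i$ is used there, never any property of $\IZ$. I expect this transfer to be the main obstacle. In particular, one must check that the step ``Novikov acyclic $\Rightarrow$ finite type over $RG_\varphi$'' only needs a chain contraction over $\widehat{RG}_{-\varphi}$. Such a contraction can be truncated to a finite approximation with positive valuation, because $F_{\le k+1}$ is finitely generated.

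\emph{Part (1).} I will use the Bieri--Renz form. The matrices of $f_i$ for $i\le k$, together with the chain homotopy data, involve only finitely many group elements $g_1,\dots,g_m$. The condition $\varphi(g_j)>0$ for all $j$ is an open condition on $\varphi\in S(G)$. The same $f_*$ therefore witnesses membership for every $\psi$ near $\varphi$, so $\Sigma^n(X;R)$ is open.

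\emph{Part (2).} The classifying map $X\to BG$ is $(n+1)$-connected when $\widetilde X$ is $n$-connected. Hence the induced map $c\colon C_*(\widetilde X;R)\to C_*(EG;R)$ is a homology isomorphism in degrees $\le n$ and surjective in degree $n+1$. Its mapping cone $E_*$ is therefore acyclic in degrees $\le n+1$, so it is chain equivalent to a complex vanishing in degrees $\le n+1$. Consequently $E_*$ is of finite $(n+1)$-type over $RG_\varphi$ for every $\varphi$. I then use the two-out-of-three property for the cofibre sequence $C\to D\to E$:
\begin{itemize}
\item if $C$ and $E$ have finite $k$-type, then so does $D$;
\item if $D$ has finite $k$-type and $E$ has finite $(k+1)$-type, then $C$ has finite $k$-type.
\end{itemize}
This follows by building finite models degreewise through mapping cones. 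With $k=n$ both directions hold, giving $\Sigma^n(X;R)=\Sigma^n(G;R)$. With $k=n+1$ the first direction gives $\Sigma^{n+1}(X;R)\subseteq\Sigma^{n+1}(G;R)$.

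\emph{Part (3).} The inclusions $\Sigma^n\subseteq\Sigma^{\dim X}$ for $n\ge\dim X$ are immediate from the definition. For the converse, $C_*(\widetilde X;R)$ is a finite free complex of length $d=\dim X$. Its Novikov homology $H_i(\widehat{RG}_{-\varphi}\otimes C_*)$ therefore vanishes automatically for $i>d$. The Sikorav form then shows that $\varphi\in\Sigma^d$ implies $\varphi\in\Sigma^n$ for all $n\ge d$. This is where I prefer the Novikov form, since projectivity of $RG$ over the monoid ring $RG_\varphi$ is not available to truncate directly.
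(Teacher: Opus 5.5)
Your proposal is correct and takes essentially the same route as the paper. The paper gives no argument of its own: it attributes the result to Farber--Geoghegan--Sch\"utz (Theorem~4, Proposition~3 and Corollaries~1--2 there) and to Hughes--Kielak, restated over an arbitrary coefficient ring. That is exactly your plan: openness from the finitely many group elements in the valuation criterion, the comparison along the $(n+1)$-connected classifying map for (2), and vanishing of Novikov homology above $\dim X$ for (3). You also correctly identify the one real point to check, namely that the Bieri--Renz/Sikorav characterisation does not depend on the coefficient ring.
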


We record an adaptation of Fisher's result (Theorem~\ref{the:Fisher_F}) to the context of
CW-complexes.  Whilst the the statement does not make mention of BNSR invariants, both
Kielak and Fisher's arguments utilities them heavily (hence why have included the versions
for spaces above.)

\begin{addendum}\label{adden:Fisher}
  Let $F$ be a field and let $n \in \IZ_{\ge 1}$. Let $X$ be a CW complex with $C_\ast(X)$
  of finite $n$-type over $F$ and suppose $G=\pi_1(X)$ is a virtually $\RFRS$ group.  The
  following are equivalent:
  \begin{enumerate}
  \item\label{aden:Fisher_F:(1)} We have $b_p(\widetilde X;\cald_{F[G]}) = 0$ for
    $p \le n$;
  \item\label{aden:Fisher_F:(2)} There is a finite-index subgroup $H\subseteq G$
    corresponding to a finite cover $Y\to X$ and an epimorphism $\phi \colon H \to \IZ$
    corresponding to an infinite cyclic cover $\bar Y\to Y$ such that $C_\ast(Y;F)$ has finite $n$-type over $F$;

  \item\label{aden:Fisher_F:(3)} There is a finite-index subgroup $H' \subseteq G$
    corresponding to a finite cover $Y'\to X$ and an epimorphism $\phi' \colon H' \to \IZ$
    corresponding to an infinite cyclic cover $\bar Y\to Y'$ such that $b_k(\bar Y;F)< \infty$ holds for $k \le n$.
  \end{enumerate}
\end{addendum}
\begin{proof}
  This is essentially identical to~\cite[Theorem~6.14]{Fisher(2024)}
  replacing the free resolution of $F$ by free $F G$-modules with the
  cellular chain complex $C_\ast(\widetilde X)$ and taking into
  account Lemma~\ref{lem:Novikov_rings_and_chain_complex}.  Note that
  the openness of the homological BNSR invariants for spaces allows us
  to apply Kielak's theorem~\cite[Theorem~6.6]{Fisher(2024)} (for the
  original proof see~\cite[Theorem~5.2]{Kielak(2020fibring)}) as in
  Fisher's proof.
\end{proof}

We also give a formulation of Theorem~\ref{the:FIK_new} for CW complexes.

\begin{addendum}\label{aden:FIK_new}
  Let $X$ be a CW complex of finite $(n+1)$-type and suppose
  $G=\pi_1(X)$ is a virtually $\RFRS$ group.  Then there exists a
  finite set of primes $\calp_G$ with the following properties:

  \begin{enumerate}
  \item\label{adne:FIK_new:calp_G} The following statements are equivalent:
    \begin{enumerate}
    \item\label{aden:FIK_new:calp_G:vanishing}We have $b_k(X;\cald_{\IF_p[G]})=0$ for
      $k\le n$ and $p \in \calp_G$;
    \item\label{aden:FIK_new:calp_G:type}There exists a finite-index subgroup
      $H\subseteq G$ corresponding to a finite cover $Y\to X$ and an epimorphism $\phi\colon H \to \IZ$ 
      corresponding to an infinite cyclic cover $\bar Y\to Y$ such that $C_\ast(Y;F)$
	 has finite
      $n$-type over every field $F$;
    \end{enumerate}
  
  \item\label{aden:FIK_new:calp} Let $\calp$ be a any non-empty set of primes. Then the
    following statements are equivalent:
    \begin{enumerate}
    \item\label{aden:FIK_new:calp:vanishing} We have $b_k(X;\cald_{\IF_p[G]})=0$ for
      $k\le n$ and $p \in \calp$;
    \item\label{aden:FIK_new:calp:type} There exists a finite-index subgroup
      $H\subseteq G$ corresponding to a finite cover $Y\to X$ and an epimorphism $\phi\colon H \to \IZ$ 
      corresponding to an infinite cyclic cover $\bar Y\to Y$ such that $C_\ast(Y;F)$
	 has finite
      $n$-type over every field $F$ of characteristic $p$  satisfying $p \in \calp$  or $p \notin \calp_G$.
    \end{enumerate}
  \end{enumerate}
\end{addendum}
\begin{proof}
  The proof is essentially the same as~Theorem~\ref{the:FIK_new},
  making the obvious modifications to replace a free resolution of $F$ by $FG$-modules
  with the cellular chain complex $C_\ast(\widetilde X)$.
\end{proof}


\typeout{----- Section 3: General but not aspherical counterexamples --------------------}

\section{General but not aspherical counterexamples}%
\label{sec:General_but_not_aspherical_counterexamples}

\begin{theorem}\label{the:counterexamples_to_fibring_non_aspherical}
  Let $G$ be a group for which there exists is a connected finite $CW$-complex $B$ with
  fundamental group $G$ such that all $L^2$-Betti numbers
  $b_n^{(2)}(\widetilde{B};\caln(G))$ vanish or such that $G$ is a $\RALI$-group and for
  every field $F$ all $L^2$-Betti numbers $b_n^{(2)}(\widetilde{B};\cald_{F[G]})$ vanish.

  Then there exists for every $d \ge \max\{2\cdot \dim(B),6\}$ a connected closed smooth
  manifold $M$ with fundamental group $G$ and dimension $d$ with the following properties:

  \begin{enumerate}
  \item\label{the:counterexamples_to_fibring_non_aspherical:vanishing} All $L^2$-Betti
    numbers $b_n^{(2)}(\widetilde{M};\caln(G))$ vanish. If we additionally assume that $G$
    is a \RALI-group, we can arrange that for every field $F$ all $L^2$-Betti numbers
    $b_n^{(2)}(\widetilde{M};\cald_{F[G]})$ vanish;

  \item\label{the:counterexamples_to_fibring_non_aspherical:not_of-finite_type} For every
    finite covering $p \colon N \to M$ with connected total space $N$ and every
    epimorphism $\phi \colon \pi_1(N) \to \IZ$ the total space of the infinite cyclic
    covering $\overline{N} \to N$ associated to $\phi$ is not homotopy equivalent to a
    $CW$-complex of finite type;

  \item\label{the:counterexamples_to_fibring_non_aspherical:not_fibring} $M$ does not
    virtually fibre over $S^1$;
   
  \item\label{the:counterexamples_to_fibring_non_aspherical:fibred_over_all_fields} If
    additionally $G$ is virtually RFRS, then there exists a finite index subgroup
    $H\leqslant G$ with corresponding finite cover $N\to M$ and epimorphism
    $\phi:H\to \IZ$ with corresponding infinite cyclic cover $\bar N\to N$ such
    that $C_\ast(\bar N;F)$ is finite type.

  \end{enumerate}
\end{theorem}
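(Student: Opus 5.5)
The plan is to start from $B$ and enlarge it, without changing the fundamental group, by a simply connected piece whose cellular chain complex over $\IZ G$ is acyclic after passing to any of the skew fields $\caln(G)$ (or its division closure) or $\cald_{F[G]}$, but is \emph{not} acyclic after passing to any Novikov ring. We then thicken the resulting complex to a manifold and take the boundary. Choose an element $x \in \IZ G$ as follows.
\begin{itemize}
\item In the $\caln(G)$ case, take $x=2$.
\item In the \RALI\ case, take $x = 2+3g$ for some $1 \neq g \in G$. A \RALI\ group is torsionfree, so $g$ has infinite order. The image of $x$ in $F[G]$ is non-zero for every field $F$, because $2$ and $3$ do not both vanish in $F$. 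Since $F[G] \subseteq \cald_{F[G]}$ and the latter is a skew field, $x$ acts invertibly there.
\end{itemize}
If $G$ admits no virtual epimorphism to $\IZ$, assertions~\ref{the:counterexamples_to_fibring_non_aspherical:not_of-finite_type} and~\ref{the:counterexamples_to_fibring_non_aspherical:not_fibring} are vacuous and one may take $x=2$.

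Let $X = B \vee S^2 \cup_x e^3$, where the $3$-cell is attached along the element $x \in \IZ G \cong \pi_2(B\vee S^2)$ restricted to the new summand. Then $\dim X = \max\{\dim B,3\}$, $\pi_1(X)=G$, and
\[
C_*(\widetilde X) \cong C_*(\widetilde B) \oplus \bigl(\IZ G \xrightarrow{\;r_x\;} \IZ G\bigr)
\]
up to a shift in degrees $3,2$, up to the irrelevant basepoint terms. Embed $X$ into $\IR^{d+1}$; this is possible for $d+1 \ge 2\dim X+1$, and we need $d \geq 6$ for the later application of Farrell's theorem. Let $W$ be a regular neighbourhood and $M=\partial W$. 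Since the codimension is large, $M \to W \simeq X$ is at least $2$-connected, so $\pi_1(M)=G$. Poincar\'e--Lefschetz duality gives a short exact sequence of $\IZ G$-chain complexes
\[
0 \to C_*(\widetilde M) \to C_*(\widetilde W) \to C_*(\widetilde W,\widetilde M) \to 0, \qquad C_*(\widetilde W,\widetilde M)\simeq C^{d+1-*}(\widetilde W).
\]
Both outer terms become acyclic after tensoring with any skew field $\cald$ that inverts $x$ and kills the $L^2$-homology of $B$; duality preserves this. Hence the long exact sequence gives~\ref{the:counterexamples_to_fibring_non_aspherical:vanishing}.

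For~\ref{the:counterexamples_to_fibring_non_aspherical:not_of-finite_type}, let $N \to M$ be a finite cover with group $H$, and let $\phi\colon H \to \IZ$ be surjective with kernel $K$. The corresponding cover $N$ is again the boundary of the corresponding cover of $W$, which contains the lifted $X$-piece. Suppose $\overline N$ had finite type. Then, by Lemma~\ref{lem:criteria_for_finitely_dominated} and Lemma~\ref{lem:Novikov_rings_and_chain_complex} (using Example~\ref{exa:Novikov_rings_and_group_rings} and a finite-dimensional model of $\overline{N}$), the Novikov homology of $C_*(\widetilde N)$ vanishes. Running the same duality sequence, the Novikov homology of $C_*(\widetilde W)$ restricted to $H$ would vanish in the relevant range; here the large codimension separates the degrees. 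Restricted to $H$, the summand $r_x$ becomes a matrix over $\IZ H$ built from $[G:H]$ copies of $x$. Its Novikov homology is non-zero for the following reasons.
\begin{itemize}
\item If $x=2$, then $2$ is not a unit in $\IZ K_\gamma((t))$.
\item If $x = 2+3g$ and $\phi$ is non-zero on the relevant conjugates of $g$, the extremal $t$-coefficients of the entries are $2$ and $3$, which are non-units of $\IZ K$. Otherwise the augmentation of the determinant-type element is a power of $5$, which is not a unit.
\end{itemize}
Making this non-invertibility rigorous for the induced matrix (rather than a single element) is the main obstacle. I would first try reducing mod a prime, or augmenting to $\IZ[\IZ]((t))$, to obtain a commutative ring where determinants are available.

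Assertion~\ref{the:counterexamples_to_fibring_non_aspherical:not_fibring} then follows from~\ref{the:counterexamples_to_fibring_non_aspherical:not_of-finite_type} via Theorem~\ref{the:Farrells_fibring_theorem} and Remark~\ref{rem:locally_trivial_bundle_versus_fibration}, since a fibration with closed-manifold fibre makes $\overline N$ of finite type. Assertion~\ref{the:counterexamples_to_fibring_non_aspherical:fibred_over_all_fields} follows by applying Addendum~\ref{adden:Fisher} and Addendum~\ref{aden:FIK_new} to the finite complex $M$, using~\ref{the:counterexamples_to_fibring_non_aspherical:vanishing} with $n = d$ and Theorem~\ref{the:props_BNSR}(3) to pass from finite $n$-type to finite type.
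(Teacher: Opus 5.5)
Your architecture is the paper's: wedge $B$ with $S^2$, attach one $3$-cell along $x$, thicken in $\IR^{d+1}$, take the boundary, then use duality for (1), Novikov homology for (2) and the addenda for (4). The step you leave open is a genuine hole in the write-up, but it closes along your second suggestion, not the first.

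Write the restricted differential over $\IZ[H]$ as $A=2I+3P$, where $P$ is monomial: it is the coset permutation of right multiplication by $g$, with entries in $H$.
\begin{itemize}
\item The augmentation $\IZ[K]\to\IZ$ is $\gamma$-invariant, so it induces ring maps $\IZ[K]_{\IZ[\gamma]}((t^{\pm1}))\to\IZ((t^{\pm1}))$ into commutative rings (not $\IZ[\IZ]((t))$).
\item By right exactness of base change, $\mathrm{coker}(A)$ surjects onto $\mathrm{coker}(\bar A)$, where $\bar A=2I+3\bar P$ and $\bar P$ is a permutation matrix whose entries are powers of $t$.
\item Then $\det\bar A=\prod_c\bigl(2^{r_c}-(-3)^{r_c}t^{m_c}\bigr)$. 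The product runs over the cycles $c$ of the coset permutation, $r_c$ is the length of $c$, and $m_c$ is $\phi$ of the product of the entries around $c$.
\item Every factor is a non-unit in $\IZ((t))$ and in $\IZ((t^{-1}))$. For $m_c\neq0$ its extremal coefficients are $2^{r_c}$ and $\pm3^{r_c}$. For $m_c=0$ it is an integer in $5\IZ\smallsetminus\{0\}$; note this is a nonzero multiple of $5$, not a power of $5$ (e.g. $2^3+3^3=35$).
\item Hence $\det\bar A$ is not a unit, $\bar A$ is not surjective, and the second Novikov homology is non-zero.
\end{itemize}
Reduction modulo a prime cannot replace this. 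Modulo $2$ or $3$ the matrix becomes invertible. Modulo $5$ it becomes $2(I-P)$, whose Novikov cokernel vanishes as soon as $\phi$ is non-zero on every cycle monodromy.

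Your caution here is justified. The paper's proof of Lemma~\ref{lem:properties_of_C_ast}~\ref{lem:properties_of_C_ast:Z} splits the restricted differential into $1\times1$ blocks $r_{2h_y+3e}$. But that differential is $2\,\id+3\bigl(\bigoplus_y r_{h_y}\bigr)\circ\tau$, which splits this way only when $\tau=\id$. The determinant argument repairs that step as well.

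Beyond this, your route is simpler than the paper's in two places.
\begin{itemize}
\item For the $\caln(G)$ statement you take $x=2$, i.e.\ you wedge on a Moore space $M(\IZ/2,2)$, as the paper itself does in Section~\ref{sec:Products}. Since $2$ is central, its restriction to any $H$ is $2\cdot I$. So the Novikov argument needs no matrices, $L^2$-acyclicity is immediate, and the paper's case split on whether some finite-index $G'$ has $b_1(G')>0$ disappears. The price is failure over $\IF_2$, which is why the paper uses the single element $3g+2e$ for both statements.
\item In the \RALI\ case you get $\cald_{F[G]}$-acyclicity of the new summand for any $g\neq e$, because $\cald_{F[G]}$ is a skew field containing $F[G]$. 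The paper instead picks $g$ of infinite order in $H_1(G')$, passes to finite index, proves Novikov acyclicity over every field, and then invokes Lemma~\ref{lem:elementary_properties_of_mapping_torus}. Yours is shorter but rests on the division-ring property. You should also say why $2+3g$ is invertible in $\caln(G)$: write $2+3g=3g(1+\tfrac23 g^{-1})$ and use a Neumann series.
\end{itemize}

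You transfer the Novikov non-vanishing from $X$ to $M$ via the duality sequence and degree separation, whereas the paper simply uses that $\widetilde M\to\widetilde X$ is $3$-connected. Both work.

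Two small points.
\begin{itemize}
\item Assertion (3) needs only that the fibre of $N\to S^1$ is a closed manifold homotopy equivalent to $\overline N$, not Farrell's theorem. Also, $d\ge 6$ is automatic from $d\ge 2\dim X$.
\item Your aside ``take $x=2$ if there is no virtual epimorphism to $\IZ$'' would destroy $\IF_2$-vanishing in the \RALI\ case. It is harmless only because a nontrivial finitely generated \RALI\ group always surjects onto $\IZ$.
\end{itemize}
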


Note that we are not claiming in
Theorem~\ref{the:counterexamples_to_fibring_non_aspherical} that $M$ is aspherical.
Moreover, the existence of the connected finite $CW$-complex $B$ appearing in
Theorem~\ref{the:counterexamples_to_fibring_non_aspherical} is necessary, since any
connected closed manifold has the homotopy type of a finite $CW$-complex.  Hence the class
of group appearing in Theorem~\ref{the:counterexamples_to_fibring_non_aspherical} is the
largest possible one to which the theorem can apply.

\subsection{Preparation for proving Theorem~\ref{the:counterexamples_to_fibring_non_aspherical}}
The proof of Theorem~\ref{the:counterexamples_to_fibring_non_aspherical} needs some
preparation.

Consider the following situation.  Let $\phi \colon G \to \IZ$ be a surjective group
homomorphism with kernel $K$.  Consider an element $y \in G$ which is mapped under
$\phi \colon G \to \IZ$ to a generator of $\IZ$.  Denote by $\gamma \colon K \to K$ the
automorphism of $K$ sending $k$ to $yky^{-1}$. Let $C_*$ be a $\IZ[K]$-chain complex.  Let
$f_* \colon C_* \to \gamma^*C_*$ be a $\IZ[K]$-chain map, where $\gamma^*C_*$ is the
$\IZ[K]$-chain complex obtained from $C_*$ by restriction with $\gamma $. We obtain a
$\IZ[G]$-chain map
\[
  \widehat{f_*} \colon \IZ[G] \otimes_{\IZ[K]} C_* \to \IZ[G] \otimes_{\IZ[K]} C_*
\]
by sending $u \otimes v$ for $u \in \IZ[G]$ and $v \in C_n$ to
$u \otimes v - uy^{-1} \otimes f_n(v)$.

\begin{definition}[Mapping torus for chain maps]\label{def:chain_complex_version_of_a_mapping_torus}
  Define the mapping torus $T(f_*)_*$ of $f_*$ to be the $\IZ[G]$-chain complex given by
  the mapping cone $\cone(\widehat{f_*})$ of $\widehat{f}_*$.
\end{definition}

\begin{lemma}\label{lem:elementary_properties_of_mapping_torus}
The following conclusions hold:
  \begin{enumerate}
  \item\label{lem:elementary_properties_of_mapping_torus:homotopy_invariance} Let $C_*$
    and $D_*$ be $\IZ[K]$-chain complexes.  Consider $\IZ[K]$-chain maps
    $f_* \colon C_* \to \gamma^*C_*$ and $g_* \colon D_* \to \gamma^*D_*$ and a
    $\IZ[K]$-chain homotopy equivalence $u_* \colon C_* \to D_*$ such that the following
    diagram of $\IZ[K]$-chain complexes commutes up to $\IZ[K]$-chain homotopy
    \[
      \xymatrix{C_* \ar[r]^-{f_*} \ar[d]_{u_*} & \gamma^* C_* \ar[d]^{\gamma^* u_*}
        \\
        D_* \ar[r]_-{g_*} & \gamma^* D_*.  }
    \]
    Then there is a $\IZ[G]$-chain homotopy equivalence
    \[
      v_* \colon T(f_*)_* \xrightarrow{\simeq} T(g_*)_*;
    \]

  \item\label{lem:elementary_properties_of_mapping_torus:starting_with_ZG-chain_complex}
    Let $C_*$ be a projective $\IZ[G]$-chain complex. Let
    $l(y)_* \colon i^*C_* \to \gamma^*i^*C_*$ be the $\IZ[K]$-chain map given by left
    multiplication with $y$, where $i^*C_*$ is the $\IZ[K]$-chain complex obtained from
    $C_*$ by restricting the $G$-action to $K$.

    Then there is a $\IZ[G]$-chain homotopy equivalence
    \[v_* \colon T(l(y)_*)_* \to C_*;
    \]

  \item\label{lem:elementary_properties_of_mapping_torus:L_upper_2-acyclic} Let $C_*$ be a
    projective $\IZ[G]$-chain complex.  Suppose that $i^*C_*$ is $\IZ[K]$-chain homotopy
    equivalent to a finitely generated free $\IZ[K]$-chain complex. Then $C_*$ is
    $\IZ[G]$-chain homotopy equivalent to a finitely generated free $\IZ[G]$-chain complex
    and all $L^2$-Betti numbers $b_n^{(2)}(C_*;\caln(G))$ vanish. If we additionally
    assume that $G$ is a \RALI-group, then for every field $F$ all $L^2$-Betti numbers
    $b_n^{(2)}(C_*;\cald_{F[G]})$ vanish.
  \end{enumerate}
\end{lemma}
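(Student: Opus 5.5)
For~\ref{lem:elementary_properties_of_mapping_torus:homotopy_invariance} I will use that induction $\IZ[G]\otimes_{\IZ[K]}-$ is a functor preserving chain homotopies. Set $U_* = \IZ[G]\otimes_{\IZ[K]} u_*$, which is a $\IZ[G]$-chain homotopy equivalence. Let $h_*$ be a $\IZ[K]$-chain homotopy from $\gamma^*u_*\circ f_*$ to $g_*\circ u_*$. Because $y$ is central with respect to the twisting, the square $\widehat{g_*}\circ U_* \simeq U_*\circ \widehat{f_*}$ commutes up to the homotopy $u\otimes v \mapsto -uy^{-1}\otimes h_n(v)$. A homotopy-commutative square whose vertical maps are equivalences induces a map of mapping cones, built from $U_*$ in both slots together with the homotopy. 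By the five lemma applied to the long exact sequences of the cones, this map is a homology isomorphism. To upgrade it to a chain homotopy equivalence without projectivity hypotheses, I will instead use the standard fact that a map of cones induced by a homotopy-commutative square of homotopy equivalences is itself a homotopy equivalence. This fact follows by constructing the map from a homotopy inverse of $u_*$ and checking directly, as in the usual proof for topological mapping cones.

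For~\ref{lem:elementary_properties_of_mapping_torus:starting_with_ZG-chain_complex} I will use the multiplication map $\mu\colon \IZ[G]\otimes_{\IZ[K]} i^*C_*\to C_*$, $u\otimes v\mapsto uv$. Then $\mu\circ\widehat{l(y)_*}(u\otimes v) = uv - uy^{-1}yv = 0$, so $\mu$ extends to a chain map $v_*\colon \cone(\widehat{l(y)_*})\to C_*$ that vanishes on the shifted copy. The key step is the short exact sequence of $\IZ[G]$-modules in each degree,
\[
0\to \IZ[G]\otimes_{\IZ[K]} C_n \xrightarrow{\widehat{l(y)}} \IZ[G]\otimes_{\IZ[K]} C_n \xrightarrow{\mu} C_n\to 0.
\]
This is the Laurent-polynomial identification $\IZ[G]\otimes_{\IZ[K]}C_n\cong \IZ[t,t^{-1}]\otimes C_n$ with map $1-t^{-1}y$, as in Example~\ref{exa:Novikov_rings_and_group_rings}; injectivity follows by looking at leading terms. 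Hence the cone is quasi-isomorphic to $C_*$ via $v_*$. Since all complexes involved are projective over $\IZ[G]$ (induction preserves projectivity) and positive, the quasi-isomorphism is a chain homotopy equivalence.

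For~\ref{lem:elementary_properties_of_mapping_torus:L_upper_2-acyclic} I will pick a finitely generated free $D_*$ with an equivalence $u_*\colon i^*C_*\to D_*$ and transport $l(y)_*$ to a chain map $g_* = \gamma^*u_*\circ l(y)_*\circ u_*^{-1}\colon D_*\to\gamma^*D_*$. By~\ref{lem:elementary_properties_of_mapping_torus:homotopy_invariance} and~\ref{lem:elementary_properties_of_mapping_torus:starting_with_ZG-chain_complex}, $C_*\simeq T(g_*)_*$, and $T(g_*)_*$ is finitely generated free. The $L^2$-vanishing is then the chain-complex version of the mapping torus theorem~\cite[Theorem~6.63]{Lueck(2002)}. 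Its proof passes to the finite cyclic coverings, i.e.\ the mapping tori of $g_*^m$ over index-$m$ subgroups, whose cell counts per degree stay bounded while the $L^2$-Betti numbers multiply by $m$. The same argument applies verbatim to $\cald_{F[G]}$-Betti numbers for \RALI\ groups, since these are also multiplicative under finite index~\cite{Avramidi-Lueck(2026)}.

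I expect the main obstacle to be the bookkeeping of the twisted homotopy in~\ref{lem:elementary_properties_of_mapping_torus:homotopy_invariance}, together with the exactness claim in~\ref{lem:elementary_properties_of_mapping_torus:starting_with_ZG-chain_complex}.
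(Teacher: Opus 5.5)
Your proposal is correct and follows essentially the same route as the paper. For (1) you prove directly, via the induced map of cones and the five lemma in the homotopy category, what the paper cites from L\"uck--Macko. For (2) you use the same multiplication map and degreewise exact sequence, with your twisted Laurent identification playing the role of the paper's untwisting isomorphism $a_*$ onto $\IZ[\IZ] \otimes_{\IZ} C_*$. For (3) you reduce via (1) and (2) to a finitely generated free mapping torus and run the finite-cover argument of L\"uck's mapping torus theorem, which is the ``chain complex version'' the paper invokes. The one step you leave implicit is that the restriction of $C_*$ to $\phi^{-1}(m\IZ)$ is $\IZ[\phi^{-1}(m\IZ)]$-chain homotopy equivalent to the mapping torus of the $m$-fold composite of $g_*$; this follows from (1) and (2) applied with $y^m$ in place of $y$.
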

\begin{proof}~\ref{lem:elementary_properties_of_mapping_torus:homotopy_invariance} This
  follows for instance from~\cite[Lemma~14.60 on page~521]{Lueck-Macko(2024)}.
  \\[1mm]~\ref{lem:elementary_properties_of_mapping_torus:starting_with_ZG-chain_complex}
  We obtain a $\IZ[G]$-chain map $u_* \colon \IZ[G] \otimes_{\IZ[K]} i^*C_* \to C_*$ by
  sending $g \otimes x$ to $gx$ for $g \in G$ and $x \in C_*$. Its composite with
  $\widehat{l(y)_*} \colon \IZ[G] \otimes_{\IZ[K]} i^*C_* \to \IZ[G] \otimes_{\IZ[K]}
  i^*C_*$ is trivial.  Hence we obtain a $\IZ[G]$-chain map
  \[
    v_* \colon T(l(y)_*)_* \to C_*
  \]
  from the universal property of the mapping cone, see for instance~\cite[Lemma~14.45 on
  page~515]{Lueck-Macko(2024)}.  There is an isomorphism of $\IZ[G]$-chain complexes
  \[
    a_* \colon \IZ[G] \otimes_{\IZ[K]} i^*C_* \xrightarrow{\cong} \IZ[\IZ] \otimes_{\IZ}
    C_*, \quad g \otimes x \mapsto \phi(g) \otimes gx,
  \]
  where $g' \in G$ acts on $\IZ[G] \otimes_{\IZ[K]} i^*C_*$ by
  $g'(g \otimes x) = (g'g) \otimes x$ for $g \in G$ and $x \in C_*$ and on
  $\IZ[\IZ] \otimes_{\IZ} C_*$ by $g' \cdot n \otimes x = (\phi(g') +n) \otimes g'x$ for
  $n \in \IZ$ and $x \in C_*$. Its inverse sends $n \otimes x$ to $g \otimes g^{-1}x$ for
  any choice $g \in G$ with $\phi(g) = n$. The following diagram of $\IZ[G]$-chain
  complexes commutes
  \[\xymatrix@!C=10em{\IZ[G] \otimes_{\IZ[K]} i^*C_* \ar[r]^-{\widehat{l(y)_*}}
      \ar[d]_{a_*}^{\cong} & \IZ[G] \otimes_{\IZ[K]} i^*C_* \ar[d]^{a_*}_{\cong}
      \\
      \IZ[\IZ] \otimes_{\IZ} C_* \ar[r]_-{r_{1 - \phi(y)} \otimes_{\IZ} \id_{C_*}}&
      \IZ[\IZ] \otimes_{\IZ} C_* }
  \]
  where $r_{1 - \phi(y)} \colon \IZ[\IZ] \to \IZ[\IZ]$ is given by multiplication with the
  element $1 - \phi(y) \in \IZ$ considered as an element in $\IZ[\IZ]$. Note that we have
  the short exact sequence of $\IZ[\IZ]$-modules
  $0 \to \IZ[\IZ] \xrightarrow{r_{1 -\phi(y)}} \IZ[\IZ] \xrightarrow{\epsilon} \IZ \to 0$
  and an obvious $\IZ[G]$-chain isomorphism
  $\IZ \otimes_{\IZ} C_* \xrightarrow{\cong} C_*$, where we consider $\IZ$ as a
  $\IZ[\IZ]$-module by the trivial $\IZ$-action and $\epsilon$ is the augmentation
  homomorphism. This implies that $v_*$ induces isomorphisms on the homology groups. Since
  the source and the target of $v_*$ are projective, $v_*$ is a $\IZ[G]$-chain homotopy
  equivalence.  \\[1mm]~\ref{lem:elementary_properties_of_mapping_torus:L_upper_2-acyclic}
  In view of
  assertions~\ref{lem:elementary_properties_of_mapping_torus:homotopy_invariance}
  and~\ref{lem:elementary_properties_of_mapping_torus:starting_with_ZG-chain_complex} it
  suffices to show for a finitely generated free $\IZ[K]$-chain complex $D_*$ and a
  $\IZ[K]$-chain map $w_* \colon D_* \to \gamma^*D_*$ that the $\IZ[G]$-chain complex
  $T(w_*)_*$ is $L^2$-acyclic. The proof of this fact is the obvious chain complex version
  of the proof of~\cite[Theorem~2.1]{Lueck(1994b)}.
\end{proof}

  \begin{lemma}\label{lem:units_in_Novikov_rings}
    Let $\Psi \colon S \xrightarrow{\cong} S$ be an automorphism of a ring $S$.
    \begin{enumerate}
     
    \item\label{lem:units_in_Novikov_rings:plus} Consider an element
      $x =\sum_{n = n_0} ^{\infty }s_n t^n \in S_{\Psi}((t))$ for some $n_0 \in \IZ$ such
      that $s_{n_0}$ is not a zero-divisor.

      Then $x$ is a unit in $S_{\Psi}((t))$ if and only if $s_{n_0}$ is a unit in $S$;

    \item\label{lem:units_in_Novikov_rings:minus} Consider an element
      $x =\sum_{n = -\infty} ^{n_0 }s_n t^n \in S_{\Psi}((t^{-1}))$ for some $n_0 \in \IZ$
      such that $s_{n_0}$ is not a zero-divisor.

      Then $x$ is a unit in $S_{\Psi}((t^{-1}))$ if and only if $s_{n_0}$ is a unit in
      $S$.

    \end{enumerate}
  \end{lemma}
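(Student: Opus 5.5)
It suffices to prove assertion~\ref{lem:units_in_Novikov_rings:plus}, since assertion~\ref{lem:units_in_Novikov_rings:minus} follows by the symmetry $t \mapsto t^{-1}$. Precisely, $S_{\Psi}((t^{-1}))$ is identified with $S_{\Psi^{-1}}((u))$ for $u = t^{-1}$. The multiplication rule $(s_1u^{n_1})(s_2u^{n_2}) = s_1\Psi^{-n_1}(s_2)u^{n_1+n_2}$ is exactly the $\Psi^{-1}$-twisted rule, and under this identification the top coefficient $s_{n_0}$ becomes the lowest coefficient.

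First I would reduce to the case $n_0 = 0$. Write $x = t^{n_0} y$ with
\[
y = t^{-n_0}x = \sum_{m\ge 0} \Psi^{-n_0}(s_{m+n_0})\,t^{m}.
\]
The monomial $t^{n_0}$ is a unit with inverse $t^{-n_0}$. Moreover $\Psi^{-n_0}$ is a ring automorphism, so it preserves both being a unit and being a non-zero-divisor. Hence $x$ is a unit if and only if $y$ is, and the lowest coefficient $y_0 = \Psi^{-n_0}(s_{n_0})$ is a unit (respectively a non-zero-divisor) if and only if $s_{n_0}$ is.

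\emph{Sufficiency.} Suppose $s_0$ is a unit. Write $x = s_0(1 - z)$ with $z = -s_0^{-1}\sum_{n\ge1}s_nt^n$, so that $z$ has only positive powers of $t$. Then $z^k$ involves only powers $t^{m}$ with $m \ge k$. Consequently $\sum_{k\ge0} z^k$ converges coefficientwise, because each coefficient receives only finitely many contributions. This sum is a two-sided inverse of $1 - z$. Therefore $x^{-1} = \bigl(\sum_k z^k\bigr)s_0^{-1}$.

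\emph{Necessity.} Suppose $x$ is a unit and let $w = \sum_{n\ge m_0} r_nt^n$ be its inverse, with $r_{m_0}\ne 0$. The coefficient of $t^{m_0}$ in $xw$ is $s_0\Psi^0(r_{m_0}) = s_0 r_{m_0}$. Since $s_0$ is not a zero-divisor, this coefficient is nonzero. Because $xw = 1$, this forces $m_0 = 0$ and $s_0r_0 = 1$.

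Next I compare lowest terms in $wx = 1$. Here $w$ starts at degree $0$, so the degree-$0$ coefficient is $r_0\Psi^0(s_0) = r_0s_0$, and this gives $r_0s_0 = 1$. Thus $s_0$ has both a left and a right inverse, and so it is a unit.

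The only step needing care is this necessity argument. The non-zero-divisor hypothesis is what rules out the lowest terms of $x$ and $w$ cancelling each other, which would otherwise allow $w$ to start in a nonzero degree. The cancellation is excluded in $xw$; for $wx$ the degree is already pinned down as $m_0 = 0$, so the zero-divisor issue does not arise there.
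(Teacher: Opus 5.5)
Your proof is correct and follows essentially the same route as the paper. Sufficiency comes from factoring off the leading coefficient and inverting $1 - z$ as a geometric series, and necessity from comparing lowest-degree coefficients first in $xx^{-1} = 1$, where the non-zero-divisor hypothesis pins down the degree of $x^{-1}$, and then in $x^{-1}x = 1$. Your normalization to $n_0 = 0$ and the explicit identification $S_{\Psi}((t^{-1})) \cong S_{\Psi^{-1}}((u))$ make precise what the paper leaves as ``completely analogous''. They also handle the twist correctly: the paper writes the lowest coefficient of $xx^{-1}$ as $s_{n_0}s_{m_0}$, where it should be $s_{n_0}\Psi^{n_0}(s_{m_0})$.
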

  \begin{proof} We give the proof only for
    assertion~\ref{lem:units_in_Novikov_rings:plus}, the one for
    assertion~\ref{lem:units_in_Novikov_rings:minus} is completely analogous.
  
    If $s_0$ is a unit in $S$, then $x$ is a unit in $S_{\Psi}((t))$, since
    $t^{-n_0}s_0^{-1}$ and every element of the form $1 + s_1 t + s_2 t^2 + \cdots$ is a 
    unit in $S_{\Psi}((t))$.  Suppose that $x$ is a unit in $S_{\Psi}((t))$. Write the
    inverse $x^{-1} = \sum_{m = m_0} ^{\infty}s_m t^m$ for some $m_0 \in \IZ$ and
    $s_{m_0} \not = 0$.  Since $x \cdot x^{-1} = 1$ holds in $S_{\Psi}((t))$ and
    $s_{n_0} \cdot s_{m_0} \not= 0$ holds in $S$, we have $m_0 + n_0 = 0$ and
    $s_{n_0}\cdot s_{m_0} = 1$ in $S$. Analogously one proves $s_{m_0}\cdot s_{n_0} = 1$
    in $S$.  Hence $s_{n_0}$ is a unit.
  \end{proof}

   \begin{lemma}\label{lem:properties_of_C_ast}
     Let $G$ be a group and let $g \in G$ be any non-identity element.
     Consider the element $3g +2e \in \IZ[G]$ for $e \in G$ the identity.
     Let $C_*$ be the $1$-dimensional finite free $\IZ[G]$-chain
     complex whose first differential $c_1 \colon \IZ[G] \to \IZ[G]$
     is given by right multiplication with $3g+2e$.
     
     \begin{enumerate}
     \item\label{lem:properties_of_C_ast:Z} Consider any subgroup $H \subseteq G$ of
       finite index and any epimorphism $\phi \colon H \to \IZ$. Let
       $i \colon K = \ker(\phi) \to G$ be the inclusion.  Let $i^*C_*$ be the
       $\IZ[K]$-chain complex obtained from $C_*$ by restriction with $i$. Let
       $\gamma \colon K \xrightarrow{\cong} K$ be the automorphism given by conjugation
       with some preimage of a generator of $\IZ$ under $\phi \colon H \to \IZ$.  Let
       $j \colon H \to G$ be the inclusion.
      
       Then we have
       \[
         \IZ[H] = \IZ[K]_{\IZ[\gamma]}[t,t^{-1}] = \IZ[K]_{\IZ[\gamma]}((t)) \cap \IZ[K]_{\IZ[\gamma]}((t^{-1}))
       \]
      and both $H_0(\IZ[K]_{\IZ[\gamma]}((t)) \otimes_{\IZ[H]} j^*C_*)$ and
      $H_0(\IZ[K]_{\IZ[\gamma]}((t^{-1})) \otimes_{\IZ[H]} j^*C_*)$ are non-trivial.
      Moreover, the $\IZ[K]$-chain complex $i^* C_*$ is not $\IZ[K]$-chain homotopy
      equivalent to a finite projective $\IZ[K]$-chain complex;

    \item\label{lem:properties_of_C_ast:F} Consider any group epimorphism
      $\phi \colon G \to \IZ$ such that $\phi(g) \not= 0$.  Let $F$ be any field.

      Then $H_i(F[K]_{F[\gamma]}((t)) \otimes_{\IZ[G]} C_*)$ and
      $H_i(F[K]_{F[\gamma]}((t^{-1})) \otimes_{\IZ[G]} C_*)$ are trivial for all
      $i \in \IZ_{\ge 0}$.  Moreover, the $F[K]$-chain complex $F \otimes_{\IZ} i^*C_*$ is
      $F[K]$-chain homotopy equivalent to a finite projective $F[K]$-chain complex;

    \item\label{lem:properties_of_C_ast:vanishing_of_Betti_numbers} If there is a subgroup
      $G' \subseteq G$ of finite index such that $g \in G'$ holds and the image of $g$
      under the canoncial map $G' \to H_1(G')$ has infinite order, then all the
      $L^2$-Betti numbers $b_n^{(2)}(C_*;\caln(G))$ vanish. If we additionally assume that
      $G$ is a \RALI-group, then for every field $F$ all $L^2$-Betti numbers
      $b_n^{(2)}(C_*;\cald_{F[G]})$ vanish.

    \end{enumerate}
  \end{lemma}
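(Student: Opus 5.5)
Throughout, $C_*$ is the complex $\IZ[G]\xrightarrow{\cdot(3g+2e)}\IZ[G]$ concentrated in degrees $1$ and $0$. Since $\IZ[G]$ has no zero divisors of the form $3g+2e$ (multiplication by it is injective, as one sees from any total order on the support of an element and comparing extreme terms, or simply by the argument below), $H_1$ vanishes after any base change along a map where $3g+2e$ stays a non-zero-divisor. Hence everything reduces to deciding whether $3g+2e$ becomes a unit in the relevant Novikov rings. The tool for this is Lemma~\ref{lem:units_in_Novikov_rings}, applied in the identification $\IZ[H]=\IZ[K]_{\IZ[\gamma]}[t,t^{-1}]$ of Example~\ref{exa:Novikov_rings_and_group_rings}.

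For assertion~\ref{lem:properties_of_C_ast:Z}, restriction to the finite-index subgroup $H$ gives that $j^*C_*$ is a finite free $\IZ[H]$-complex, namely a direct sum of $[G:H]$ copies of complexes whose differential is right multiplication by elements of the form $3h+2e$, $h=xgx^{-1}$, after choosing coset representatives. It is more convenient to argue directly. Over the Novikov ring, write $3g+2e$ in $t$-expansion; on each summand it has the form $3kt^m+2$ with $k\in K$ and $m=\phi(h)$.
\begin{itemize}
\item If $m=0$, the expansion lies in $\IZ[K]$, which is not a unit because augmentation sends it to $5$.
\item If $m\neq 0$, in one of the two Novikov rings the lowest coefficient is $3k$ and in the other it is $2$.
\end{itemize}
Neither $3k$ nor $2$ is a unit in $\IZ[K]$, by augmentation, and neither is a zero divisor. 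So Lemma~\ref{lem:units_in_Novikov_rings} shows that $3kt^m+2$ is not a unit in either Novikov ring. Since it is also not right-invertible, $H_0$ is non-zero. Here I would make sure that a non-unit in this lemma actually fails to be surjective on the module; I will check that the same leading-coefficient argument rules out one-sided inverses. Non-vanishing of $H_0$ then gives, via Lemma~\ref{lem:Novikov_rings_and_chain_complex} (with $S=\IZ[K]$ and $C_*$ replaced by $j^*C_*$), that $i^*C_*$ is not chain homotopy equivalent to a finite projective $\IZ[K]$-complex.

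For assertion~\ref{lem:properties_of_C_ast:F}, now $H=G$, and $\phi(g)=m\ne0$ gives $3g+2e=3kt^m+2$. Over a field $F$, consider the two cases.
\begin{itemize}
\item If $\mathrm{char}F\ne2,3$, both extreme coefficients $3k$ and $2$ are units in $F[K]$.
\item If $\mathrm{char}F=2$, the element equals $3kt^m$; if $\mathrm{char}F=3$, it equals $2$. In either case it is a unit.
\end{itemize}
So by Lemma~\ref{lem:units_in_Novikov_rings} the differential becomes an isomorphism in both Novikov rings, and all homology vanishes. Lemma~\ref{lem:Novikov_rings_and_chain_complex} with $S=F[K]$ then gives the finiteness statement.

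For assertion~\ref{lem:properties_of_C_ast:vanishing_of_Betti_numbers}, $L^2$-Betti numbers scale by the index under restriction to $G'$, for both $\caln$ and $\cald_{F[-]}$, and the subgroup $G'$ is again \RALI. So we may assume $G=G'$. Pick $\phi\colon G\to\IZ$ factoring through $H_1(G)$ modulo torsion with $\phi(g)\neq0$, which exists since $g$ has infinite order in $H_1$. Then $i^*C_*$ is a complex of $\IZ[K]$-modules. The main obstacle is that it is not finitely generated over $\IZ[K]$, so Lemma~\ref{lem:elementary_properties_of_mapping_torus} does not apply over $\IZ$. I would instead argue directly. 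For $\caln(G)$, right multiplication by $3g+2e$ on $\ell^2(G)$ is injective, because $g$ has infinite order and the element lies in the commutative group ring of $\langle g\rangle\cong\IZ$, where it is a nonzero Laurent polynomial. Its adjoint $3g^{-1}+2e$ is injective for the same reason, so the image is dense and both $b_0^{(2)}$ and $b_1^{(2)}$ vanish. For $\cald_{F[G]}$, with $G$ a \RALI-group, the element $3g+2e$ is nonzero in $F[\langle g\rangle]$, hence invertible in $\cald_{F[\langle g\rangle]}\subseteq\cald_{F[G]}$. The exception is $F$ of characteristic $2$ or $3$, where the element is $g$ or $2e$, still a unit. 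So $\cald_{F[G]}\otimes C_*$ is acyclic.
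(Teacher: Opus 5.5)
There is a genuine gap in (1), in the passage from an arbitrary finite-index subgroup $H$ to the case $H=G$. The $\IZ[H]$-complex $j^*C_*$ is \emph{not} a direct sum of $[G:H]$ complexes whose differentials are right multiplication by elements $3h+2e$.

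Write $G=\coprod_x Hx$. Right multiplication by $g$ carries the summand $\IZ[H]x$ to $\IZ[H]x'$ with $Hx'=Hxg$. So in this basis $r_{3g+2e}=2I+3P$, where $P$ is a monomial matrix over $\IZ[H]$ whose underlying permutation is the action of $g$ on $H\backslash G$. This matrix is diagonal only if $g$ lies in the normal core of $H$. In general $xgx^{-1}\notin H$, so ``$3h+2e$ with $h=xgx^{-1}$'' is not even an element of $\IZ[H]$.

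For example, take $G=\IZ=\langle g\rangle$, $H=\langle g^2\rangle$, $t=g^2$, $K=\{e\}$. The differential in the basis $\{e,g\}$ is
\[
  \begin{pmatrix} 2 & 3\\ 3t & 2 \end{pmatrix},
\]
with determinant $4-9t$. No diagonal matrix with entries $3t^{a}+2$ has this determinant up to a unit. The paper's written proof has the same defect: the permutation $\tau$ in its commutative diagram silently disappears when it passes to the complexes $D_*[y]$. So your leading-coefficient analysis ``on each summand'' does not apply.

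The conclusion is nevertheless true, and here is one way to get it.
\begin{enumerate}
\item The augmentation induces a ring homomorphism $\IZ[K]_{\IZ[\gamma]}((t))\to\IZ((t))$, $\sum s_nt^n\mapsto\sum\epsilon(s_n)t^n$. It is multiplicative since $\epsilon\circ\IZ[\gamma]=\epsilon$.
\item By right exactness of $\otimes$, the Novikov $H_0$ surjects onto the cokernel of $2I+3\bar P$ over the commutative ring $\IZ((t))$, where $\bar P$ is monomial with entries $t^{\phi(h)}$.
\item The determinant of $2I+3\bar P$ is $\prod\bigl(2^c-(-3)^ct^{a}\bigr)$, the product over the cycles of $g$ on $H\backslash G$, with $c$ the cycle length and $a=\phi(xg^cx^{-1})$.
\item The lowest coefficient of each factor is $2^c$, $\pm3^c$, or the nonzero multiple $2^c-(-3)^c$ of $5$. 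Hence the determinant is not a unit of $\IZ((t))$, the matrix is not surjective, and $H_0\neq0$. The same works for $t^{-1}$.
\end{enumerate}
This also covers $H=G$ without the zero-divisor and one-sided-inverse checks. For $H=G$ your own argument does go through once the check you flag is done. However, your ``total order on the support'' justification of non-zero-divisors is not valid for arbitrary $G$.

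The other parts are fine. Your proof of (2) is the paper's argument.

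For (3), the paper reduces to $G'$, passes to a subgroup on which $\phi(g)$ generates, and combines (2) with Lemma~\ref{lem:elementary_properties_of_mapping_torus}. That lemma is stated over $\IZ[K]$, so, as you rightly observe, it can only be used with field coefficients. Your direct argument is correct and more elementary: $r_{3g+2e}$ and its adjoint are injective on $\ell^2(G)$, and $3g+2e\neq0$ is invertible in the skew field $\cald_{F[G]}$. It needs no reduction to $G'$, which in any case would go through induction from $G'$, not restriction. Indeed, $3g+2e=3g(e+\tfrac{2}{3}g^{-1})$ is invertible in $\ell^1(G)$, so the $\caln(G)$-part needs no hypothesis on $g$ at all.
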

  \begin{proof}~\ref{lem:properties_of_C_ast:Z} We begin with the case $H = G$. For every
    $k \in K$ none of the elements $3k$, $2e$, and $3k + 2e$ is a unit in $\IZ[K]$ or a
    zero-divisor in $\IZ[K]$, since this is true for their images under the augmentation
    map $\IZ[K] \to \IZ$.  Lemma~\ref{lem:units_in_Novikov_rings} implies that $3g +2e$ is
    not a unit in $\IZ[K]_{\IZ[\gamma]}((t^{\pm}))$. The element $3g +2e$ in
    $\IZ[K]_{\IZ[\gamma]}((t^{\pm}))$ is not a zero-divisor since its image under the
    obvious projection $\IZ[K]_{\IZ[\gamma]}((t^{\pm})) \to \IZ((t^{\pm}))$ induced by the
    augmentation homomorphism $\IZ[K] \to \IZ$ is not a zero-divisor.  Hence both
    $H_1(\IZ[K]_{\IZ[\gamma]}((t)) \otimes_{\IZ[G]} C_*)$ and
    $H_1(\IZ[K]_{\IZ[\gamma]}((t^{-1})) \otimes_{\IZ[G]} C_*)$ are trivial and both
    $H_0(\IZ[K]_{\IZ[\gamma]}((t)) \otimes_{\IZ[G]} C_*)$ and
    $H_0(\IZ[K]_{\IZ[\gamma]}((t^{-1})) \otimes_{\IZ[G]} C_*)$ are non-trivial.  Now apply
    Lemma~\ref{lem:Novikov_rings_and_chain_complex}.
  
    Next we treat the general case. Choose a set theoretic section $s$ of the canoncial
    projection $G \to H\backslash G$.  Then we obtain a $\IZ[H]$-isomorphism
    \[
      \alpha \colon \bigoplus_{y \in H\backslash G} \IZ[H] \xrightarrow{\cong} j^*\IZ[G],
      \quad \{h_y \mid y \in H \backslash G\} \mapsto \sum_{y \in H\backslash G} h_y \cdot
      s(y).
    \]
    Then there are a permutation isomorphism
    \[
     \tau \colon \bigoplus_{y \in H\backslash G} \IZ[H] \xrightarrow{\cong} \bigoplus_{y  \in H\backslash G} \IZ[H]
    \]
    coming from the bijection
    $H\backslash G \xrightarrow{\cong} H\backslash G$ induced by right multiplication with
    $g$ and elements $\{h_y \in H \mid y \in H \backslash G\}$ such that the following
    diagram of $\IZ[H]$-modules commutes
    \[
      \xymatrix@!C=10em{\bigoplus_{y \in H\backslash G} \IZ[H]
        \ar[r]^-{\tau}_-{\cong}\ar[d]_{\alpha}^{\cong} & \bigoplus_{y \in H\backslash G}
        \IZ[H] \ar[r]^-{\bigoplus_{y \in H\backslash G} r_{h_y}} & \bigoplus_{y \in
          H\backslash G} \IZ[H] \ar[d]^{\alpha}_{\cong}
        \\
        j^*\IZ[G] \ar[rr]_-{\res_G^H r_g} & & j^*\IZ[G] }
    \]
    where $r_g$ and $r_{h_y}$ stands for right multiplication with $g$ and $h_y$.  Hence
    we get
    \[
      H_0(\IZ[K]_{\IZ[\gamma]}((t^{\pm 1})) \otimes_{\IZ[H]} j^*C_*) = \bigoplus_{y \in H
        \backslash G} H_0(\IZ[K]_{\IZ[\gamma]}((t^{\pm 1})) \otimes_{\IZ[H]} D_*[y]).
    \]
    where $D_*[y]$ is the $1$-dimensional $\IZ[H]$-chain complex whose first differential
    $\IZ[H] \to \IZ[H]$ if given by right multiplication with $2h_y + 3e$.  Since we have
    already shown that $H_0(\IZ[K]_{\IZ[\gamma]}((t^{ \pm 1})) \otimes_{\IZ[H]} D_*[y])$
    is non-trivial for $y \in H\backslash G$, we conclude that
    $H_0(\IZ[K]_{\IZ[\gamma]}((t^{\pm 1})) \otimes_{\IZ[H]} j^*C_*)$ is non-trivial.  Now
    assertion~\ref{lem:properties_of_C_ast:Z} follows from
    Lemma~\ref{lem:Novikov_rings_and_chain_complex}.
    \\[1mm]\ref{lem:properties_of_C_ast:F} Let $F$ be a field. Then either $2$ and $3$ are
    units in $F$ or one of the elements $2$ and $3$ is a unit and the other is zero in
    $F$. Since $\phi(g) \not= \phi(e)$ holds, $2g + 3e$ considered as an element in
    $ F[G]$ is a unit in both $F[K]_{\IZ \psi}((t))$ and $F[K]_{\IZ \psi}((t^{-1}))$ by
    Lemma~\ref{lem:units_in_Novikov_rings}.  Now apply
    Lemma~\ref{lem:Novikov_rings_and_chain_complex}.
    \\[1mm]~\ref{lem:properties_of_C_ast:vanishing_of_Betti_numbers} As the $L^2$-Betti
    numbers $b_n^{(2)}(C_*;\caln(G))$ and $b_n^{(2)}(C_*;\cald_{F[G]})$ are compatible
    with induction for $G' \subset G$, see~\cite[Lemma~1.24~(4) on page~30]{Lueck(2002)}
    and~\cite[Theorem~3.12]{Avramidi-Lueck(2026)}, and $C_*$ is
    $\IZ[G] \otimes_{\IZ[G']} C_*'$ for the $1$-dimensional finite free $\IZ[G']$-chain
    complex $C_*'$ whose first differential $c_1 \colon \IZ[G'] \to \IZ[G']$ is given by
    right multiplication with $3g+2e$, we can assume without loss of generality that
    $G = G'$ and hence $b_1(G) \ge 1$ holds.  Since the image of $g$ under the canoncial
    map $G \to H_1(G)$ has infinite order, we can choose a group homomorphism
    $\phi \colon G \to \IZ$ satisfying $\phi(g) \not= 0$.  There is a subgroup of finite
    index $G'' \subseteq G$ such that $g$ is a generator of $\phi(G'')$.  Since
    $L^2$-Betti numbers are multiplicative under passing to subgroups of finite index,
    see~\cite[Theorem~1.12~(6) on page~22]{Lueck(2002)}
    and~\cite[Theorem~3.10]{Avramidi-Lueck(2026)}, we can assume without loss of
    generality that $\phi(g)$ is a generator of $\IZ$, otherwise replace $G$ by $G''$ and
    $\phi$ by $\phi|_{G''}$. Now
    assertion~\ref{lem:properties_of_C_ast:vanishing_of_Betti_numbers} follows from
    assertion~\ref{lem:properties_of_C_ast:F} and
    Lemma~\ref{lem:elementary_properties_of_mapping_torus}~%
\ref{lem:elementary_properties_of_mapping_torus:L_upper_2-acyclic}.
  \end{proof}

\begin{remark}\label{rem:Novikov_over_Z_and_F}
  Note that assertions~\ref{lem:properties_of_C_ast:Z} and~\ref{lem:properties_of_C_ast:F}
  of Lemma~\ref{lem:properties_of_C_ast} illustrate that the vanishing of the Novikov
  homology over $\IZ$ does not follow from the vanishing over every field $F$. This is in
  contrast to the fact that a finitely generated abelian group $A$ is trivial if and only
  if $\IF_p \otimes_{\IZ} A$ vanishes for every prime $p$.
\end{remark}

\subsection{Proof of Theorem~\ref{the:counterexamples_to_fibring_non_aspherical}}

Now we are ready to give the proof of
Theorem~\ref{the:counterexamples_to_fibring_non_aspherical}.

\begin{proof}[Proof of Theorem~\ref{the:counterexamples_to_fibring_non_aspherical}]
  We give the proof only for $L^2$-Betti numbers; the proof in the case, where $G$ is a
  \RALI-groups and we consider $L^2$-Betti numbers over $F[G]$ for any field $F$, is
  completely analogous.
  
  We begin with the hard case where there is a subgroup $G' \subseteq G$ of finite index
  with $b_1(G') >0$.  We can choose an element $g \in G'$ that the image of $g$ under the
  canoncial map $G' \to H_1(G')$ has infinite order.

  Let $C_*$ be the $\IZ[G]$-chain complex appearing in
  Lemma~\ref{lem:properties_of_C_ast}, i.e., $C_*$ is the $1$-dimensional finite free
  $\IZ[G]$-complex whose first differential $c_1 \colon \IZ[G] \to \IZ[G]$ is given by
  right multiplication with $3g+2e$.  We can attach to $B \vee S^2$ a $3$-cell $D^3$ such
  that for the resulting $CW$-complex $X$ the inclusion $B \to X$ is $2$-connected and
  yields therefore an identification $G = \pi_1(B) = \pi_1(X)$ and that there is an
  isomorphisms of $\IZ[G]$-chain complexes
  $C_*(\widetilde{B}) \oplus \Sigma ^2 C_* \xrightarrow{\cong} C_*(\widetilde{X})$.
  Lemma~\ref{lem:properties_of_C_ast}~\ref{lem:properties_of_C_ast:vanishing_of_Betti_numbers}
  implies that all the $L^2$-Betti numbers $b_n^{(2)}(C_*;\caln(G))$ vanish and, if $G$ is
  a \RALI-group, for every field $F$ all the $L^2$-Betti numbers
  $b_n^{(2)}(C_*;\cald_{F[G]})$ over $F[G]$ vanish.  For any subgroup $H \subseteq G$ of
  finite index and epimorphism $\phi \colon H \to \IZ$ the $\IZ[H]$-chain complex
  $\res_G^H C_*(\widetilde{X}) $ has non-trivial Novikov homology in dimension $2$ by
  Lemma~\ref{lem:properties_of_C_ast}~\ref{lem:properties_of_C_ast:Z}.  Consider any
  natural number $d$ with $d \ge 2 \cdot \dim(X)$. Since $\dim(X) \ge 3$, we have
  $d \ge 6$.  By taking the boundary $M = \partial Z$ of a regular neighborhood $Z$ of an
  embedding of $X$ into $\IR^{d+1}$, we get a closed smooth manifold $M$ of dimension $d$
  such that there is a $(d - \dim(X))$-connected map $f \colon M \to X$.  We have
  $d - \dim(X) \ge 3$.  In particular we get an identification
  $\pi_1(M) = \pi_1(Z) = \pi_1(X) = \pi_1(B) = G$.  The induced map
  $\widetilde{f} \colon \widetilde{M} \to \widetilde{X}$ is $(d - \dim(X))$-connected and
  hence $3$-connected.  Hence for every subgroup $H \subseteq G$ of finite index and
  epimorphism $\phi \colon H \to \IZ$ the second Novikov homology of
  $\res_G^H C_*(\widetilde{M})$ is non-trivial.  We conclude from
  Lemma~\ref{lem:criteria_for_finitely_dominated} and
  Lemma~\ref{lem:Novikov_rings_and_chain_complex} that for every finite covering
  $p \colon N \to M$ and every epimorphism $\phi \colon \pi_1(N) \to \IZ$ the total space
  of the infinite cyclic covering $\overline{N} \to N$ associated to $\phi$ is not
  homotopy equivalent to a $CW$-complex of finite type.

  This implies that $M$ does not virtually fibre over $S^1$.

  Since $Z$ is homotopy equivalent to $X$, all the $L^2$-Betti numbers
  $b_n^{(2)}(\widetilde{Z};\caln(G)) =b_n^{(2)}(C_*(\widetilde{Z});\caln(G))$ vanish. We
  conclude from Poincar\'e duality that all the $L^2$-Betti numbers
  $b_n^{(2)}(C_*(\widetilde{Z},\widetilde{\partial Z});\caln(G)) =
  b_n^{(2)}(C_*(\widetilde{Z},\widetilde{M});\caln(G))$ vanish, see~\cite[Theorem~1.35~(5)
  on page~37]{Lueck(2002)}. We conclude that all the $L^2$-Betti numbers
  $b_n^{(2)}(\widetilde{M};\caln(G)) =b_n^{(2)}(C_*(\widetilde{M});\caln(G))$ vanish by
  considering the long exact homology sequence.

  Next we deal with the easy case that there is no subgroup $G' \subseteq G$ of finite
  index with $b_1(G') > 0$.  By the construction above, but now applied to $B$ itself
  instead of $X$, we can construct a $d$-dimensional closed smooth manifold $M$ with
  fundamental group $G$ whose $L^2$-Betti numbers are all trivial.  Since there is no
  subgroup of finite index $G' \subseteq G$ for which there exists an epimorphism
  $\Phi \colon G' \to \IZ$, the manifold $M$ does not virtually fibre over $S^1$.

  We now prove
  Item\ref{the:counterexamples_to_fibring_non_aspherical:fibred_over_all_fields}.
   The existence of the finite index subgroup $H\leqslant G$ and character
  $\phi$ with the desired properties is given by combining
  Item~\ref{the:counterexamples_to_fibring_non_aspherical:vanishing} with
  Addendum~\ref{aden:FIK_new}. 
\end{proof}


\typeout{----- Section 4: Fibring and fibrations --------------------}

\section{Fibring and fibrations}\label{sec:fibring_and_fibrations}

Throughout this section let $F \xrightarrow{j} E \xrightarrow{f} B$ be a fibration of
spaces which have the homotopy type of a connected $CW$-complex. Recall that the
Leray--Serre spectral sequence for singular cohomology with coefficients in $\IQ$
converges to $H^{p+q}(E;\IQ)$ and has as $E^2$-term
\[E^{p,q}_2 =H_{\pi_1(B)}^p(B;H^q(F;\IQ)),\]
where the $\pi_1(B)$-action on $H^q(F;\IQ)$
comes from the fibre transport.  We denote by
\begin{equation}
  \tau \colon H^1(F;\IQ)^{\pi_1(B)} = H_{\pi_1(B)}^0(B;H^1(F;\IQ)) \to H^2(B;\IQ) = H_{\pi_1(B)}^2(B;H^0(F))
  \label{transgression_map_tau}
\end{equation}   
the \emph{transgression map} which is the differential $d_2^{0,1}$ 
of the second page starting at $(0,1)$. It fits into an exact sequence
\begin{multline}
  0 \to H^1(B;\IQ) \xrightarrow{H^1(f;\IQ)} H^1(E;\IQ) \xrightarrow{H^1(j;\IQ)}
  H^1(F;\IQ)^{\pi_1(B)}
  \\
  \xrightarrow{\tau} H^2(B;\IQ) \xrightarrow{H^2(f;\IQ)} H^2(E;\IQ).
  \label{exact_sequence_coming_fron_Leray-Serre}
\end{multline}

  \begin{lemma}\label{lem:transgression_and_H_upper_1(p;Q)}
    The following assertions are equivalent:
    \begin{enumerate}
    \item\label{lem:transgression_and_H_upper_1(p;Q):H_upper_1(f)_surjective} The map
      $H^1(f;\IQ) \colon H^1(B;\IQ) \to H^1(E;\IQ)$ is surjective;
    \item\label{lem:transgression_and_H_upper_1(p;Q):H_upper_1(f)_bijective} The map
      $H^1(f;\IQ) \colon H^1(B;\IQ) \to H^1(E;\IQ)$ is bijective;
    \item\label{lem:transgression_and_H_upper_1(p;Q):H_upper_1(j)} The map
      $H^1(j;\IQ) \colon H^1(E;\IQ) \to H^1(F;\IQ) $ is trivial;
    \item\label{lem:transgression_and_H_upper_1(p;Q):H_1_1(j)} The image of the map
      $H_1(j;\IZ) \colon H_1(F;\IZ) \to H_1(E;\IZ)$ is a torsion group;
    \item\label{lem:transgression_and_H_upper_1(p;Q):tau} The transgression map
      $\tau \colon H^1(F;\IQ)^{\pi_1(B)} \to H^2(B;\IQ)$ is injective.
    \end{enumerate}
  \end{lemma}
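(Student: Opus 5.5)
We will derive all equivalences from the five-term exact sequence~\eqref{exact_sequence_coming_fron_Leray-Serre}, together with the universal coefficient theorem, which identifies $H^1(-;\IQ)$ with $\hom(H_1(-;\IZ),\IQ)$.

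We start with the equivalences read off from exactness. Since $H^1(f;\IQ)$ is injective by~\eqref{exact_sequence_coming_fron_Leray-Serre}, surjectivity and bijectivity coincide, so \ref{lem:transgression_and_H_upper_1(p;Q):H_upper_1(f)_surjective}$\Leftrightarrow$\ref{lem:transgression_and_H_upper_1(p;Q):H_upper_1(f)_bijective}. By exactness at $H^1(E;\IQ)$, the map $H^1(f;\IQ)$ is surjective if and only if $H^1(j;\IQ)\colon H^1(E;\IQ)\to H^1(F;\IQ)^{\pi_1(B)}$ is zero. Composing with the inclusion of invariants into $H^1(F;\IQ)$ does not change whether the map is zero; moreover the image of $H^1(j;\IQ)$ automatically lies in the invariants. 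This gives \ref{lem:transgression_and_H_upper_1(p;Q):H_upper_1(f)_surjective}$\Leftrightarrow$\ref{lem:transgression_and_H_upper_1(p;Q):H_upper_1(j)}. Exactness at $H^1(F;\IQ)^{\pi_1(B)}$ says that $\tau$ is injective if and only if $H^1(j;\IQ)=0$, which gives \ref{lem:transgression_and_H_upper_1(p;Q):H_upper_1(j)}$\Leftrightarrow$\ref{lem:transgression_and_H_upper_1(p;Q):tau}.

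For \ref{lem:transgression_and_H_upper_1(p;Q):H_upper_1(j)}$\Leftrightarrow$\ref{lem:transgression_and_H_upper_1(p;Q):H_1_1(j)} we use naturality of the universal coefficient isomorphism: $H^1(j;\IQ)$ corresponds to the map $\hom(H_1(E),\IQ)\to\hom(H_1(F),\IQ)$ given by precomposition with $H_1(j;\IZ)$. Set $A=\im H_1(j;\IZ)$.
\begin{itemize}
\item If $A$ is torsion, every homomorphism $H_1(E)\to\IQ$ kills $A$, so the induced map is zero.
\item Conversely, suppose $A$ is not torsion. Then $A\otimes\IQ\neq 0$, and $A\otimes\IQ\to H_1(E)\otimes\IQ$ is injective because $\IQ$ is flat. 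We pick a $\IQ$-linear functional on $H_1(E)\otimes\IQ$ that is nonzero on the image of $A\otimes \IQ$; this uses a basis of a vector space. Restricting it to $H_1(E)$ gives an element of $H^1(E;\IQ)$ whose image under $H^1(j;\IQ)$ is nonzero.
\end{itemize}

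The main obstacle is justifying the exact sequence~\eqref{exact_sequence_coming_fron_Leray-Serre} itself. It is stated before the lemma, so we take it as given, but its validity is easy to confirm. Since $E_2^{p,0}=H^p(B;\IQ)$ for connected $F$, the low-degree terms of the Leray--Serre spectral sequence give the standard five-term exact sequence, with edge maps $H^\ast(f)$ and $H^1(j)$ and with $\tau=d_2^{0,1}$. No issue arises with infinitely generated homology, because the universal coefficient argument above works with arbitrary abelian groups.
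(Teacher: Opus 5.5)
Your proof is correct and takes the same approach as the paper, whose proof is the single sentence that the lemma follows from the five-term exact sequence coming from the Leray--Serre spectral sequence together with the Universal Coefficient Theorem. You have written out the individual implications explicitly, including the extension-of-functionals argument needed to pass between the rational condition (3) and the integral torsion condition (4).
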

  \begin{proof} This follows from the exact
    sequence~\ref{exact_sequence_coming_fron_Leray-Serre} and the Universal Coefficient
    Theorem.
  \end{proof}
  
  Assume additionally that $\pi_1(j)$ is injective.  Let 
  $\phi_E \colon \pi_1(E) \to \IZ$ be an epimorphism, let $K_E$
  be the kernel of $\phi_E$, and let 
  $i_E \colon K_E \to \pi_1(E)$ be its inclusion. Then we obtain a commutative diagram of
  groups whose rows and columns are short exact sequence of groups:

\begin{equation}\label{diagram_with_exact_rows_and_columns}
  \xymatrix{& 1  \ar[d] & 1 \ar[d] & 1 \ar[d] &
    \\
    1 \ar[r] & K_F \ar[d]^{i_F} \ar[r]  & K_E \ar[d]^{i_E} \ar[r]  & K_B \ar[d]^{i_B} \ar[r]  & 1
    \\
    1 \ar[r]  & \pi_1(F) \ar[d]^{\phi_F}  \ar[r]^{\pi_1(j)}  & \pi_1(E) \ar[d]^{\phi_E} \ar[r]^{\pi_1(f)}
    & \pi_1(B) \ar[d]^{\phi_B}  \ar[r]  & 1
    \\
    1 \ar[r]  & Q_F  \ar[d] \ar[r]^{\alpha}  & \IZ \ar[d] \ar[r]^{\beta}  & Q_B \ar[d] \ar[r]  & 1
    \\
    & 1 & 1 & 1 &
  }
\end{equation}
if we put $K_F = \pi_1(j)^{-1}(K_E)$ and $K_B = \pi_1(f)(K_E)$, let $i_F$ and $i_B$ be the
obvious inclusions of normal subgroups, and put $Q_F = \pi_1(F)/K_F$ and
$Q_B = \pi_1(B)/K_B$. All the other maps appearing in the diagram are then obvious.

 \begin{lemma}\label{lem:order_of_Q_B_and_transgression}
   The following conclusions hold:

   \begin{enumerate}

   \item\label{lem:order_of_Q_B_and_transgression:transgression_injective} Suppose that
     there is an epimorphism $\phi_E \colon \pi_1(E) \to \IZ$.  If the transgression map
     $\tau \colon H^1(F;\IQ)^{\pi_1(B)} \to H^2(B;\IQ)$ is injective, then $Q_F$ is
     trivial and the map $\beta \colon \IZ \to Q_B$ is an isomorphism.

   \item\label{lem:order_of_Q_B_and_transgression:transgression_not_injective} If the
     transgression map $\tau \colon H^1(F;\IQ)^{\pi_1(B)} \to H^2(B;\IQ)$ is not
     injective, then there exists an epimorphism $\phi_E \colon \pi_1(E) \to
     \IZ$. Moreover, $Q_F$ is infinite cyclic and $Q_B$ is finite.

   \end{enumerate}
 \end{lemma}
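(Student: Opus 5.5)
The plan is to read everything off the exact sequence \eqref{exact_sequence_coming_fron_Leray-Serre}, via Lemma~\ref{lem:transgression_and_H_upper_1(p;Q)}, together with a short diagram chase in \eqref{diagram_with_exact_rows_and_columns}. The main observation is that $Q_F \cong \phi_E(\pi_1(j)(\pi_1(F))) \subseteq \IZ$, since $K_F = \pi_1(j)^{-1}(K_E)$. Consequently $Q_F$ is either trivial or infinite cyclic, according to whether $\phi_E \circ \pi_1(j)$ is trivial or not.

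First I will verify that the bottom row of \eqref{diagram_with_exact_rows_and_columns} is exact. Injectivity of $\alpha$ follows from $K_F = \pi_1(j)^{-1}(K_E)$. Surjectivity of $\beta$ is clear. For exactness in the middle, take $x \in \pi_1(E)$ with $\pi_1(f)(x) \in K_B = \pi_1(f)(K_E)$. Then $x \in K_E \cdot \pi_1(j)(\pi_1(F))$, so $\phi_E(x) \in \phi_E(\pi_1(j)(\pi_1(F))) = \alpha(Q_F)$. Hence $Q_B \cong \IZ/\alpha(Q_F)$.

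For assertion~\ref{lem:order_of_Q_B_and_transgression:transgression_injective}, injectivity of $\tau$ gives, by Lemma~\ref{lem:transgression_and_H_upper_1(p;Q)}, that the image of $H_1(j;\IZ)$ is a torsion group. The composite $\phi_E \circ \pi_1(j)$ factors through $H_1(F;\IZ) \to H_1(E;\IZ) \to \IZ$. Its image is therefore a torsion subgroup of $\IZ$, hence zero. So $Q_F$ is trivial, and then $\beta$ is an isomorphism by the exactness just established.

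For assertion~\ref{lem:order_of_Q_B_and_transgression:transgression_not_injective}, non-injectivity of $\tau$ gives, by Lemma~\ref{lem:transgression_and_H_upper_1(p;Q)}, a class $c \in H^1(E;\IQ) = \hom(\pi_1(E),\IQ)$ with $c \circ \pi_1(j) \neq 0$. The step I expect to be the main obstacle is turning $c$ into an epimorphism onto $\IZ$. For this I will use that the image $c(\pi_1(E)) \subseteq \IQ$ is finitely generated. This holds when $\pi_1(E)$ is finitely generated, which is the situation of interest, and more generally whenever $H_1(E;\IZ)$ has finitely generated image in $H_1(E;\IQ)$. A nonzero finitely generated subgroup of $\IQ$ is infinite cyclic, so after rescaling $c$ becomes an epimorphism $\phi_E \colon \pi_1(E) \to \IZ$ which is still nontrivial on $\pi_1(j)(\pi_1(F))$. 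If finite generation were not available, I would instead try to exploit the finiteness of the $CW$-structure on $E$ to guarantee that $H_1(E;\IZ)$ is finitely generated.

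Finally, for this $\phi_E$ the group $Q_F \cong \phi_E(\pi_1(j)(\pi_1(F)))$ is a nonzero subgroup of $\IZ$, hence infinite cyclic. Then $Q_B \cong \IZ/\alpha(Q_F)$ is finite.
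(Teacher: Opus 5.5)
Your argument is correct and is essentially the paper's. Part (1) is the same torsion argument. In part (2) you use the equivalent condition $H^1(j;\IQ)\neq 0$ to get $\phi_E$ nonzero on $\pi_1(j)(\pi_1(F))$, whereas the paper uses non-surjectivity of $H^1(f;\IQ)$ to get $\phi_E$ not factoring through $\pi_1(f)$, so that $\beta$ cannot be bijective.

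Your finite-generation caveat is justified and not merely technical. The paper's step ``$H^1(f;\IQ)$ not surjective implies $H^1(f;\IZ)$ not surjective'' tacitly needs the same hypothesis, and without it assertion (2) is false: take $F=E=K(\IQ,1)$ and $B$ a point, where $\tau$ is not injective but $\IQ$ admits no epimorphism onto $\IZ$. Note also that your fallback via a finite $CW$-structure on $E$ is not available, since $E$ is not assumed finite.
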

 \begin{proof}~\ref{lem:order_of_Q_B_and_transgression:transgression_injective} Since the
   transgression map $\tau \colon H^1(F;\IQ)^{\pi_1(B)} \to H^2(B;\IQ)$ is injective by
   assumption, the image of the map $H_1(j;\IZ) \colon H_1(F;\IZ) \to H_1(E;\IZ)$ is a
   torsion group by Lemma~\ref{lem:transgression_and_H_upper_1(p;Q)}.  Since $\IZ$ is
   torsionfree, we conclude from the diagram~\eqref{diagram_with_exact_rows_and_columns}
   that the image of $\alpha$ is trivial.  Hence $Q_F$ is trivial and the map
   $\beta \colon \IZ \to Q_B$ is an isomorphism.
   \\[1mm]~\ref{lem:order_of_Q_B_and_transgression:transgression_not_injective} Since the
   transgression $\tau \colon H^1(F;\IQ)^{\pi_1(B)} \to H^2(B;\IQ)$ is not injective, the
   map $H^1(f;\IQ) \colon H^1(B;\IQ) \to H^1(E;\IQ)$ is not surjective by
   Lemma~\ref{lem:transgression_and_H_upper_1(p;Q)}.  This implies that
   $H^1(f;\IZ) \colon H^1(B;\IZ) \to H^1(E;\IZ)$ is not surjective.  Hence we can find a
   non-trivial homomorphism $\phi_E \colon \pi_1(E) \to \IZ$ such that there is no
   homomorphism $\mu \colon \pi_1(B) \to \IZ$ with $\mu \circ \pi_1(f) = \phi_E$. Since we
   can replace $\phi_E \colon \pi_1(E) \to \IZ$ by the induced group epimorphism
   $\phi_E \colon \pi_1(E) \to \im(\phi_E)$, we can assume without loss of generality that
   $\phi_E \colon \pi_1(E) \to \IZ$ is surjective and that there is no group homomorphism
   $\mu \colon \pi_1(B) \to \IZ$ with $\mu \circ \pi_1(f) = \phi_E$. Hence the map
   $\beta \colon \IZ \to Q_B$ appearing in
   diagram~\eqref{diagram_with_exact_rows_and_columns} cannot be bijective.  This implies
   that $Q_B$ has finite order and $Q_F$ is infinite cyclic.
 \end{proof}

 Consider an epimorphism $\phi_E \colon \pi_1(E) \to \IZ$.  Let
 $p_{\phi_F} \colon \overline{F}_{\phi_F} \to F$,
 $p_{\phi_E} \colon \overline{E}_{\phi_E} \to E$, and
 $p_{\phi_B} \colon \overline{B}_{\phi_B} \to B$, be the coverings with path connected
 total spaces associated to the epimorphisms $\phi_F$, $\phi_E$, and $\phi_B$. Note that
 these are regular coverings with $Q_F$, $Q_E$, and $Q_B$ as deck transformation groups.
 The given fibration $F \xrightarrow{j} E \xrightarrow{f} B$ induces a fibration of spaces
 which have the homotopy types of connected $CW$-complexes

 \begin{equation}
   \overline{F}_{\phi_F} \xrightarrow{\overline{j}} \overline{E} _{\phi_E}
   \xrightarrow{\overline{f}} \overline{B}_{\phi_B}.
   \label{induced_fibrations_on_the_coverings}
 \end{equation}

 The construction is left to the reader and is similar to the one appearing in the proof
 of
 Lemma~\ref{the:fibring_and_fibrations}~\ref{the:fibring_and_fibrations:d_injective_plus_extra}.

 \begin{lemma}\label{lem:fibrations_and_finite_type}
   Consider $d \in \IZ_{\ge 0} $. Let $F \xrightarrow{j} E \xrightarrow{f} B$ be a
   fibration. Then:

   \begin{enumerate}

   \item\label{lem:fibrations_and_finite_type:F_B_to_E_F_d} If both $F$ and $B$ are of
     type $\F_d$, then $E$ is of type $\F_d$;

   \item\label{lem:fibrations_and_finite_type:F_E_to_B_F_d} If $F$ is of type $\F_{d-1}$
     and $E$ is of type $\F_{d}$, then $B$ is of type $\F_{d}$;

   \item\label{lem:fibrations_and_finite_type:F_E_to_B_F_infty} If $F$ is of type
     $\F_{\infty}$ and $E$ is of type $\F_{\infty}$, then $B$ is of type $\F_{\infty}$;

   \item\label{lem:fibrations_and_finite_type:F_B_to_E_general} If both $F$ and $B$ are
     homotopy equivalent to finite $CW$-complexes, finitely dominated $CW$-complexes, or
     to finite dimen\-sional $CW$-complexes, then the same is true for $E$.
   \end{enumerate}
 \end{lemma}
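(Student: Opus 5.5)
Throughout, we may replace $F$, $E$, $B$ by homotopy equivalent $CW$-complexes and $f$ by a cellular map. The basic tool is the standard fact that any fibration over a $CW$-complex $B$ is fibre homotopy equivalent to one that can be rebuilt cell by cell from the cells of $B$. Concretely, $E$ is homotopy equivalent to a $CW$-complex $E'$ with a filtration $E'_n$ such that $E'_n$ is obtained from $E'_{n-1}$ by attaching, for each $n$-cell of $B$, a copy of $F \times D^n$ along $F \times S^{n-1}$. This is the usual inductive argument over the skeleta of $B$, using that the fibration pulled back to a cell $D^n$ is fibre homotopy trivial, together with homotopy invariance of homotopy pushouts.

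For assertion~\ref{lem:fibrations_and_finite_type:F_B_to_E_F_d}, we choose $B$ with finite $d$-skeleton and $F$ with finite $d$-skeleton. The cells of $F \times D^n$ in dimension $\le d$ are products of cells of $F$ of dimension $\le d-n$ with cells of $D^n$. The attaching maps can be made cellular up to homotopy, so the homotopy pushouts produce a $CW$-model whose $d$-skeleton is finite. Cells of $B$ of dimension $> d$ contribute only cells of dimension $> d$. This gives the claim.

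For assertion~\ref{lem:fibrations_and_finite_type:F_B_to_E_general}, the same construction applies. Finite cells in $F$ and $B$ yield a finite complex. Finite dimension of both yields $\dim \le \dim F + \dim B$. For the finitely dominated case, we pass to $F \times S^1$ and $B \times S^1$ (Mather's trick): finitely dominated spaces become homotopy finite after crossing with $S^1$. The fibration $F \times S^1 \to E \times S^1 \times S^1 \to B \times S^1$ is then built from finite pieces, so $E \times T^2$ is homotopy finite. Hence $E$ is dominated by $E \times T^2$, and is therefore finitely dominated.

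For assertions~\ref{lem:fibrations_and_finite_type:F_E_to_B_F_d} and~\ref{lem:fibrations_and_finite_type:F_E_to_B_F_infty}, we argue by Wall's criterion. A connected $CW$-complex $X$ is of type $\F_d$ if and only if the following two conditions hold:
\begin{itemize}
\item[(i)] $\pi_1(X)$ is finitely presented (for $d \ge 2$), respectively finitely generated (for $d = 1$);
\item[(ii)] $C_*(\widetilde X)$ is of finite $d$-type over $\IZ[\pi_1 X]$.
\end{itemize}
The fundamental group part comes from the long exact sequence of homotopy groups. The quotient $\pi_1(B) = \pi_1(E)/\im(\pi_1(F))$ is finitely generated if $\pi_1(E)$ is. If $d \ge 2$, then $\pi_1(F)$ is finitely generated, since $F$ is of type $\F_{d-1} \supseteq \F_1$. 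The quotient of a finitely presented group by the normal closure of a finitely generated subgroup is finitely presented, and we also need to account for the image of $\pi_2(B) \to \pi_1(F)$. Then $\pi_1(B)$ is the quotient of $\pi_1(E)$ by the image of $\pi_1(F)$, a finitely generated normal subgroup, so it is finitely presented.

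For the chain level we use the filtration above in its relative form. Passing to the universal cover of $B$, the pullback $\widehat E \to \widetilde B$ is a fibration with fibre $F$. The complex $C_*(\widehat E)$ admits a filtration whose associated graded is
\[
\IZ[\pi_1 B]\text{-free on cells of } B \;\otimes\; C_*(F),
\]
up to twisting, which yields the Serre-type spectral sequence. We then use the algebraic fact (Brown's/Bieri's criterion) that in a short exact sequence of complexes, finite type of two terms in the appropriate range implies the same for the third. Inductively on skeleta of $B$, if $F$ is $\F_{d-1}$ and $E$ is $\F_d$, the cofibre $E / (F\text{-part})$ controls $B$ up to dimension $d$. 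This gives that $C_*(\widetilde B)$ is of finite $d$-type. Assertion~\ref{lem:fibrations_and_finite_type:F_E_to_B_F_infty} follows from assertion~\ref{lem:fibrations_and_finite_type:F_E_to_B_F_d} for all $d$.

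The main obstacle is assertion~\ref{lem:fibrations_and_finite_type:F_E_to_B_F_d}, namely making the algebraic "two out of three" argument precise with the degree shift. There the fibre only needs finiteness up to $d-1$ because the base cells are attached with the fibre crossed with discs of dimension at least one. An alternative route I would try first is to use the homotopy-theoretic version: the mapping cone of $F \to E$ (fibrewise) is related to $B$ through dimension $d$ by Blakers--Massey/Ganea.
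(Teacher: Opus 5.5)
Your arguments for (1) and (4) are sound. The model of $E$ assembled from copies of $F\times D^n$, one for each cell of $B$, gives (1) and also the finite and finite-dimensional cases of (4). Mather's trick is a valid route to the finitely dominated case, and is an alternative to combining finite type with finite dimension. The $\pi_1$-part of (2) is also fine: $\pi_1(B)\cong\pi_1(E)/\im(\pi_1(F))$, and this normal subgroup is finitely generated once $d\ge 2$. The image of $\pi_2(B)$ in $\pi_1(F)$ plays no role there. For comparison, the paper proves nothing here: it cites L\"uck's Lemma~7.2 for (1)--(3) and says (4) is analogous to (1).

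The genuine gap is the chain-level step of (2), which you leave as a sketch; (3) inherits the gap. There is no short exact sequence linking $C_*(\widetilde F)$, $C_*(\widetilde E)$ and $C_*(\widetilde B)$ to which a two-out-of-three argument applies. Your filtration of $C_*(\widehat E)$ is indexed by the cells of $B$, which are exactly what you are trying to show are finite in number, so as stated the argument is circular. The sentence that the cofibre ``controls $B$ up to dimension $d$'' is the claim itself, not a proof of it.

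What is missing is an inductive step over skeleta of $B$ that restricts the fibration. For $3\le k\le d$, suppose $B$ already has a finite $(k-1)$-skeleton $B_{k-1}$ (the cases $k\le 2$ are your group-theoretic argument), and put $E_{k-1}=f^{-1}(B_{k-1})$.
\begin{enumerate}
\item Apply (1) to $F\to E_{k-1}\to B_{k-1}$, where $F$ is of type $\F_{k-1}$, to get that $E_{k-1}$ is of type $\F_{k-1}$. This is exactly where the hypothesis $\F_{d-1}$ on $F$, and the degree shift, enter.
\item Since $f$ is a fibration, $\pi_n(E,E_{k-1})\to\pi_n(B,B_{k-1})$ is bijective for all $n$. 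Hence $(E,E_{k-1})$ is $(k-1)$-connected and $\pi_1(E_{k-1})\cong\pi_1(E)$.
\item Relative Hurewicz on universal coverings then gives $\pi_k(B,B_{k-1})\cong\pi_k(E,E_{k-1})\cong H_k(\widetilde E,\widetilde E_{k-1})$.
\item The relative cellular complex is, up to homotopy, the mapping cone of a map from a complex of finite $(k-1)$-type to one of finite $k$-type, so it has finite $k$-type. It is acyclic below degree $k$, so its $H_k$ is finitely generated over $\IZ[\pi_1(E)]$. Therefore $\pi_k(B,B_{k-1})$ is finitely generated over $\IZ[\pi_1(B)]$.
\item Attaching finitely many $k$-cells to $B_{k-1}$ along such generators gives a $k$-connected map to $B$. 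So $B$ has a model with finite $k$-skeleton.
\end{enumerate}
This is also how your filtration idea can be rescued. Work with the pair $(\widehat E,\widehat E_{k-1})$ lying over $(\widetilde B,\widetilde B_{k-1})$ rather than with all of $\widehat E$. That pair has finite $d$-type over $\IZ[\pi_1(B)]$, and its bottom homology is $H_k(\widetilde B,\widetilde B_{k-1})$.
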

 \begin{proof}
   Assertions~\ref{lem:fibrations_and_finite_type:F_B_to_E_F_d},~%
\ref{lem:fibrations_and_finite_type:F_E_to_B_F_d},
   and~\ref{lem:fibrations_and_finite_type:F_E_to_B_F_infty} are proved
   in~\cite[Lemma~7.2]{Lueck(1997a)}.  The proof of
   assertion~\ref{lem:fibrations_and_finite_type:F_B_to_E_general} is analogous to the one
   of assertion~\ref{lem:fibrations_and_finite_type:F_B_to_E_F_d}.
 \end{proof}

 Although we will not use the next lemma in the sequel, we record it as it may be of independent interest.

\begin{lemma}\label{lem:virtually_fibring_and_bundles_and_products_inheritiance_from_base_total-space}
  Let $F \to E \xrightarrow{p} B$ be a smooth fibre bundle of connected smooth closed
  manifolds.  Then
  \begin{enumerate}
  \item\label{lem:virtually_fibring_and_bundles_and_products_inheritiance_from_base_total-space:fibring}
    If $B$ fibres over $S^1$, then $E$ fibres over $S^1$;
  \item\label{lem:virtually_fibring_and_bundles_and_products_inheritiance_from_base_total-space:virtually_fibring}
    If $B$ virtually fibres over $S^1$, then $E$ virtually fibres over $S^1$.
  \end{enumerate}
\end{lemma}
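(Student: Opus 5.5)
Part (1) is handled by pulling the fibration on $B$ back along $p$. Suppose $q \colon B \to S^1$ is a smooth fibre bundle with closed fibre $B_0$. Since $p$ and $q$ are both smooth locally trivial fibre bundles (in particular submersions), the composite $q \circ p \colon E \to S^1$ is a proper surjective submersion. By Ehresmann's theorem it is therefore a smooth locally trivial fibre bundle. Its fibre over a point $z$ is $p^{-1}(q^{-1}(z)) = p^{-1}(B_0)$. This is the total space of the restricted bundle $F \to p^{-1}(B_0) \to B_0$, which is a closed smooth manifold.

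For the induced map on $\pi_1$, note that $\pi_1(p)$ is surjective because $F$ is connected, so $\pi_1(q \circ p) = \pi_1(q) \circ \pi_1(p)$ is surjective onto $\pi_1(S^1) \cong \IZ$. The fibre $p^{-1}(B_0)$ may be disconnected if $B_0$ is, but the same issue already arises for $B$ itself. If one insists on connected fibres, use the long exact homotopy sequence: surjectivity of $\pi_1(q\circ p)$ onto $\pi_1(S^1)$ gives $\pi_0$ of the fibre equal to a point. So the fibre is connected exactly when $\pi_1(q)$ is onto. Since $\pi_1(p)$ is onto, this holds whenever it holds for $q$, which is the standard convention.

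Part (2) reduces to part (1). Let $B' \to B$ be a finite covering such that $B'$ fibres over $S^1$. Form the pullback $E' = B' \times_B E$. Then $E' \to E$ is a finite covering, and $F \to E' \to B'$ is a smooth fibre bundle of closed manifolds. The total space $E'$ is connected: by the long exact sequence, $\pi_0(E') \cong \pi_0(B')$ because $F$ is connected. Applying part (1) to this bundle shows that $E'$ fibres over $S^1$, so $E$ virtually fibres.

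The only point needing care is Ehresmann's theorem, which requires that $q\circ p$ be a proper submersion. Properness holds because $E$ is compact. Being a submersion holds because the composite of submersions is a submersion. Everything else is routine, so I do not expect a real obstacle.
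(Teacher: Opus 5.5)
Your proposal is correct and follows the paper's proof. Part (1) composes $p$ with the bundle $q \colon B \to S^1$, whose fibre is the restricted bundle $F \to p^{-1}(B_0) \to B_0$, and part (2) pulls back along the finite covering and applies (1). The only differences are that you justify via Ehresmann's theorem that $q\circ p$ is locally trivial, which the paper simply asserts, and that you add a connectivity discussion. With the paper's convention of connected fibres, that discussion reduces to the observation that a bundle with connected fibre $F$ over the connected manifold $B_0$ has connected total space.
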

\begin{proof}~\ref{lem:virtually_fibring_and_bundles_and_products_inheritiance_from_base_total-space:fibring}
  Let $D \to B \xrightarrow{q} S^1$ be a smooth fibre bundle of connected smooth closed
  manifolds. Then the composite $q \circ p \colon E \to S^1$ is a smooth fibre bundle of
  connected smooth closed manifolds, whose fibre $F'$ is the total space of a smooth
  closed fibre bundle $F \to F' \to D$.
  \\[1mm]~\ref{lem:virtually_fibring_and_bundles_and_products_inheritiance_from_base_total-space:virtually_fibring}
  Let $q \colon \overline{B} \to B$ is a finite covering with connected $\overline{B}$
  such that $\overline{B}$ fibres over $S^1$.  Consider the pullback
  \[
    \xymatrix{\overline{E} \ar[r]^{\overline{p}} \ar[d]_{\overline{q}} & \overline{B}
      \ar[d]^q
      \\
      E \ar[r]_{\overline{q}} & B.  }
  \]
  Then $\overline{E}$ is connected, $\overline{q}$ is a finite covering, and there is a
  smooth fibre bundle of connected smooth closed manifolds
  $F \to \overline{E} \xrightarrow{\overline{q}} \overline{B}$. Then $\overline{E}$ fibres
  over $S^1$ by
  assertion~\ref{lem:virtually_fibring_and_bundles_and_products_inheritiance_from_base_total-space:fibring}.
\end{proof}

\begin{definition}[(Virtually) fibring]\label{def:virtual_fibring}
  Let $X$ be a space of the homotopy type of a connected $CW$-complex. Consider
  $d \in \IZ_{\ge 1} \amalg \{\infty\}$.

  We say that $X$ \emph{$\F_d$-fibres}, \emph{$\FF$-fibres}, or \emph{$\FP$-fibres}
  respectively, if for some epimorphism $\phi \colon \pi_1(X) \to \IZ$ the total space
  $\overline{X}_{\phi}$ of the associated infinite cyclic covering
  $p_{\phi} \colon \overline{X}_{\phi}\to X $ is of type $\F_d$, $\FF$, or $\FP$
  respectively

  We say that $X$ \emph{virtually $\F_d$-fibres}, if for some finite covering
  $\widehat{X} \to X$ with a path connected space $\widehat{X}$ as total space the space
  $\widehat{X}$ $\F_d$-fibres. Define the notions of \emph{virtually $\FF$-fibres} and
  \emph{virtually $\FP$-fibres} analogously.

\end{definition}

   \begin{lemma}\label{lem:F_d_fibring_and_fibrations_and_products_inheritiance_from_base_total-space}
     Let $F \to E \xrightarrow{f} B$ be fibration of spaces having the homotopy type of a
     connected $CW$-complex.  Consider $d \in \IZ_{\ge 1} \amalg \{\infty\}$.  Suppose
     that $F$ is of type $\F_{d-1}$ using the convention $\infty -1 = \infty$.  Then:
     \begin{enumerate}
     \item\label{lem:F_d_fibring_and_fibrations_and_products_inheritiance_from_base_total-space:fibring}
       If $B$ $\F_d$ fibres, then $E$ $\F_d$-fibres:
     \item\label{lem:F_dfibring_and_fibrations_and_products_inheritiance_from_base_total-space:virtually_fibring}
       If $B$ virtually $\F_d$-fibres, then $E$ virtually $\F_d$-fibres.
     \end{enumerate}
   \end{lemma}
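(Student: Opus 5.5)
For the first assertion I would pull back the infinite cyclic covering of $B$ along $f$. Suppose $B$ $\F_d$-fibres via an epimorphism $\phi_B \colon \pi_1(B) \to \IZ$, so that $\overline{B}_{\phi_B}$ is of type $\F_d$. Put $\phi_E = \phi_B \circ \pi_1(f)$.

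\textbf{Step 1: $\phi_E$ is an epimorphism.} The long exact homotopy sequence of the fibration shows that $\pi_1(f) \colon \pi_1(E) \to \pi_1(B)$ is surjective, because $F$ is connected. Hence $\phi_E$ is onto $\IZ$.

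\textbf{Step 2: the pullback is the right covering.} Form the pullback
\[
\overline{E} := E \times_B \overline{B}_{\phi_B} \to E.
\]
This is a covering of $E$ with deck group $\IZ$, and it induces $\phi_E$. It is path connected: the fibre $F$ maps to a single component, and $\pi_1(E)$ surjects onto the deck group $\IZ$, so it acts transitively on the sheets. Therefore $\overline{E} = \overline{E}_{\phi_E}$.

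\textbf{Step 3: the fibration upstairs.} The projection $\overline{E} \to \overline{B}_{\phi_B}$ is a fibration, being the pullback of the fibration $f$. Its fibre is again $F$. This is exactly the fibration of~\eqref{induced_fibrations_on_the_coverings} in the special case $Q_F$ trivial.

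\textbf{Step 4: finiteness.} Now apply Lemma~\ref{lem:fibrations_and_finite_type}~\ref{lem:fibrations_and_finite_type:F_B_to_E_F_d}. It needs both the fibre and the base to be of type $\F_d$. The base $\overline{B}_{\phi_B}$ is of type $\F_d$ by hypothesis. The fibre $F$ is only assumed to be of type $\F_{d-1}$, and this mismatch is the main obstacle.

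The way around it is to strengthen Lemma~\ref{lem:fibrations_and_finite_type}~\ref{lem:fibrations_and_finite_type:F_B_to_E_F_d} slightly. I claim that if $F$ is of type $\F_{d-1}$ and $B'$ is of type $\F_d$, then $E'$ is of type $\F_d$. The sketch goes as follows.
\begin{itemize}
\item Replace $B'$ by a $CW$-complex with finite $d$-skeleton.
\item Build $E'$ up to homotopy cell by cell over the cells of $B'$. Each $k$-cell of $B'$ contributes a copy of $e^k \times F$.
\item Model $F$ by a $CW$-complex with finite $(d-1)$-skeleton.
\item A cell of $e^k \times F$ has dimension $k + m$, where $m$ is the dimension of the cell of $F$. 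The cells of dimension $\le d$ therefore need $k \le d$ and $m \le d - k$.
\item Except when $k = 0$, we have $m \le d-1$, so these cells are finite in number.
\item For $k = 0$ we need $F$ to be of type $\F_d$, which is more than we have.
\end{itemize}

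So the $k = 0$ case has to be handled differently. I would use the standard equivalent characterisation: a space is of type $\F_d$ if and only if its fundamental group is finitely presented (for $d \ge 2$) and the cellular chain complex of its universal covering has finite $d$-type. Via this, together with the Serre spectral sequence with local coefficients, the chain complex of $\widetilde{E'}$ is assembled from the chains of the base of degree $\le d$, tensored with the chains of the fibre of degree $\le d-1$, up to the top degree. In the top degree only the $d$-cycles of $F$ matter, and these need only be finitely generated modulo boundaries, which the $(d-1)$-type of $F$ provides. This is the delicate point, and I would check it carefully.

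For the low-dimensional cases:
\begin{itemize}
\item For $d = 1$, the group $\pi_1(E')$ is an extension of the finitely generated group $\pi_1(B')$ by the image of $\pi_1(F)$. Since $F$ is only assumed connected, this image need not be finitely generated. So the case $d = 1$ (and $\F_2$ for $d = 2$, which concerns finite presentation) is where the hypothesis must really be used, and this is where I expect the main difficulty.
\item For $d = \infty$, Lemma~\ref{lem:fibrations_and_finite_type}~\ref{lem:fibrations_and_finite_type:F_B_to_E_F_d} applies directly with $d = \infty$, since then $F$ is of type $\F_\infty$ by the convention $\infty - 1 = \infty$.
\end{itemize}

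For the second assertion, let $\widehat{B} \to B$ be a finite connected covering that $\F_d$-fibres. Pull it back along $f$ to get $\widehat{E} \to E$. This is a finite covering, and $\widehat{E}$ is connected because $\pi_1(f)$ is surjective. It comes with a fibration $F \to \widehat{E} \to \widehat{B}$. Assertion~(1) applied to this fibration shows that $\widehat{E}$ $\F_d$-fibres, and hence $E$ virtually $\F_d$-fibres.
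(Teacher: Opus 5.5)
Your Steps 1--3, and your reduction of the second assertion to the first by pulling back a finite covering (which is connected because $\pi_1(f)$ is onto), are correct and match the paper's proof. The gap is Step~4, exactly where you located it, and it cannot be closed. Your proposed strengthening of Lemma~\ref{lem:fibrations_and_finite_type}~\ref{lem:fibrations_and_finite_type:F_B_to_E_F_d} says that a fibre of type $\F_{d-1}$ and a base of type $\F_d$ give a total space of type $\F_d$. This is already false for the trivial fibration $F \to F \to \mathrm{pt}$. Your chain-complex rescue breaks down at your $k=0$ term, because finite $(d-1)$-type of $F$ gives no control over the $d$-cycles of $F$ modulo boundaries. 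Your worry about $d=1$ is likewise justified.

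In fact both assertions fail for every finite $d$.

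\emph{The case $d \ge 2$.} Let $F = \bigvee_{\IN} S^d$. Its $(d-1)$-skeleton is a point, so $F$ is of type $\F_{d-1}$. It is not of type $\F_d$: $H_d(F) \cong \bigoplus_{\IN}\IZ$ is not finitely generated, whereas $H_d$ of a $CW$-complex with finite $d$-skeleton is. Take the product fibration $E = F \times S^1 \to B = S^1$. Then $B$ even $\F_\infty$-fibres. But $\pi_1(E) \cong \IZ$, so every connected finite covering of $E$ is again $F \times S^1$. Its only infinite cyclic covering is $F \times \IR \simeq F$, which is not of type $\F_d$.

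\emph{The case $d=1$.} Take $F = K(\IQ,1)$ and $E = F \times S^1$. Every finite index subgroup of $\IQ \times \IZ$ is of the form $\IQ \times n\IZ$. Every epimorphism from such a subgroup onto $\IZ$ has kernel $\IQ$, which is not finitely generated.

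The paper's own proof has the same defect. It invokes Lemma~\ref{lem:fibrations_and_finite_type}~\ref{lem:fibrations_and_finite_type:F_E_to_B_F_d}, which passes from total space to base and so does not apply here. The statement becomes correct, with exactly your argument, if one assumes $F$ is of type $\F_d$; for $d=\infty$ this agrees with the stated hypothesis. Step~4 is then literally Lemma~\ref{lem:fibrations_and_finite_type}~\ref{lem:fibrations_and_finite_type:F_B_to_E_F_d} applied to $F \to \overline{E} \to \overline{B}_{\phi_B}$.
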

   \begin{proof}~\ref{lem:F_d_fibring_and_fibrations_and_products_inheritiance_from_base_total-space:fibring}
     Let $q_B \colon \overline{B} \to B $ be an infinite cyclic covering with path
     connected total space such that $\overline{B}$ is of type $\F_d$.  Consider the
     pullback
     \[
       \xymatrix{\overline{E} \ar[r]^{\overline{f}}\ar[d]_{\overline{p}} & \overline{B}
         \ar[d]^{} \\E \ar[r]_{f} & B.  }
     \]
     Then $\overline{p}$ is an infinite cyclic covering with a path connected total space
     and we have the fibration
     $F \to \overline{E} \xrightarrow{\overline{f}} \overline{B}$. We conclude from
     Lemma~\ref{lem:fibrations_and_finite_type}~\ref{lem:fibrations_and_finite_type:F_E_to_B_F_d}
     that $E$ is of type $\F_d$. Hence $E$ $\F_d$-fibres.
     \\[1mm]~\ref{lem:F_dfibring_and_fibrations_and_products_inheritiance_from_base_total-space:virtually_fibring}
     Let $q_B \colon \widehat{B} \to B$ be a finite covering of path connected spaces such
     that $\widehat{B}$ $\F_d$-fibres.  Consider the pullback
     \[
       \xymatrix{\widehat{E} \ar[r]^{\widehat{f}}\ar[d]_{q_E} & \widehat{B} \ar[d]^{q_B}
         \\
         E \ar[r]_{f} & B.  }
     \]
     Then $q_E$ is a finite covering with a path connected space as total space and we
     have the fibration $F \to \widehat{E} \xrightarrow{\widehat{f}} \widehat{B}$. Now
     apply
     assertion~\ref{lem:F_d_fibring_and_fibrations_and_products_inheritiance_from_base_total-space:fibring}.
   \end{proof}

   Next we want to state results where the conclusion is the other way around, namely, we
   want to get information about $B$ from $E$.

\begin{theorem}[Fibring and fibrations]\label{the:fibring_and_fibrations}
  Let $F \xrightarrow{j} E \xrightarrow{f} B$ be a fibration of spaces which have the
  homotopy type of a connected $CW$-complex. Assume that $\pi_1(j)$ is injective. Consider
  any $d \in \IZ_{\ge 0} \amalg \{\infty\}$. Then:

  \begin{enumerate}
  \item\label{the:fibring_and_fibrations:d_is_injective} Suppose that the transgression
    map $\tau \colon H^1(F;\IQ)^{\pi_1(B)} \to H^2(B;\IQ)$ is injective and $F$ is of type
    $\F_{d-1}$.  If $E$ $\F_d$-fibres, then $B$ $\F_d$-fibres;

  \item\label{the:fibring_and_fibrations:d_is_not_injective} Suppose that the
    transgression map $\tau \colon H^1(F;\IQ) ^{\pi_1(B)}\to H^2(B;\IQ)$ is not injective,
    that for every infinite cyclic covering $\overline{F} \to F$ with path connected total
    space $\overline{F}$ the space $\overline{F}$ is of type $\F_d$, and that $B$ is of
    type $\F_d$.  Then $E$ $\F_d$-fibres.

  \item\label{the:fibring_and_fibrations:d_injective_plus_extra} Suppose that $\pi_1(B)$
    operates trivially on $H^1(F;\IQ)$, that the transgression map
    $\tau \colon H^1(F;\IQ) \to H^2(B;\IQ)$ is injective, that for every finite covering
    $q \colon \widehat{F} \to F$ with connected total space $\widehat{F}$ the induced
    homomorphism $H^1(q;\IQ) \colon H^1(F;\IQ) \to H^1(\widehat{F};\IQ)$ is bijective, and
    that $F$ is of type $\F_{d-1}$ using the convention $\infty -1 = \infty$.  If $E$
    virtually $\F_d$-fibres, then $B$ virtually $\F_d$-fibres.

  \end{enumerate}
\end{theorem}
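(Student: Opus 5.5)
The plan is to combine diagram~\eqref{diagram_with_exact_rows_and_columns}, the induced fibration~\eqref{induced_fibrations_on_the_coverings} and Lemma~\ref{lem:fibrations_and_finite_type}. The common idea is that, for the covering fibration $\overline{F}_{\phi_F} \to \overline{E}_{\phi_E} \to \overline{B}_{\phi_B}$, finiteness of two of the three spaces gives finiteness of the third.

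\emph{Assertion~\ref{the:fibring_and_fibrations:d_is_injective}.} Let $\phi_E$ be an epimorphism with $\overline{E}_{\phi_E}$ of type $\F_d$. Since $\tau$ is injective, Lemma~\ref{lem:order_of_Q_B_and_transgression}~\ref{lem:order_of_Q_B_and_transgression:transgression_injective} shows that $Q_F$ is trivial and $\beta\colon \IZ \to Q_B$ is an isomorphism. Hence $\phi_B$ is an epimorphism onto $\IZ$ with $\phi_B \circ \pi_1(f) = \phi_E$, and $\overline{F}_{\phi_F} = F$. The fibration~\eqref{induced_fibrations_on_the_coverings} becomes $F \to \overline{E}_{\phi_E} \to \overline{B}_{\phi_B}$. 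This is the pullback of $f$ along the infinite cyclic cover of $B$, so the construction is explicit. Since $F$ is of type $\F_{d-1}$ and $\overline{E}_{\phi_E}$ is of type $\F_d$, Lemma~\ref{lem:fibrations_and_finite_type}~\ref{lem:fibrations_and_finite_type:F_E_to_B_F_d} (or~\ref{lem:fibrations_and_finite_type:F_E_to_B_F_infty} when $d=\infty$) shows that $\overline{B}_{\phi_B}$ is of type $\F_d$. So $B$ $\F_d$-fibres.

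\emph{Assertion~\ref{the:fibring_and_fibrations:d_is_not_injective}.} Take $\phi_E$ from Lemma~\ref{lem:order_of_Q_B_and_transgression}~\ref{lem:order_of_Q_B_and_transgression:transgression_not_injective}; then $Q_F \cong \IZ$ and $Q_B$ is finite. Thus $\overline{B}_{\phi_B}$ is a finite cover of $B$ and is of type $\F_d$. Also $\overline{F}_{\phi_F}$ is an infinite cyclic cover of $F$, so it is of type $\F_d$ by hypothesis. Applying Lemma~\ref{lem:fibrations_and_finite_type}~\ref{lem:fibrations_and_finite_type:F_B_to_E_F_d} to~\eqref{induced_fibrations_on_the_coverings} shows that $\overline{E}_{\phi_E}$ is of type $\F_d$. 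For $d=\infty$ we apply this for each finite $d$.

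To make this rigorous I must construct~\eqref{induced_fibrations_on_the_coverings}. First pull $f$ back along the finite cover $\overline{B}_{\phi_B} \to B$ to get $E' \to \overline{B}_{\phi_B}$ with fibre $F$. Because $\pi_1(j)$ is injective and $Q_F$ is the image of $\pi_1(F)$ in $\IZ$, we have $\pi_1(E') = \phi_E^{-1}(\beta^{-1}(0))$. Then $\overline{E}_{\phi_E} \to E'$ is the infinite cyclic cover restricting on fibres to $\overline{F}_{\phi_F}\to F$. Composing gives a fibration with fibre $\overline{F}_{\phi_F}$; this follows from lifting homotopies in covering spaces.

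\emph{Assertion~\ref{the:fibring_and_fibrations:d_injective_plus_extra}.} Let $\widehat{E} \to E$ be a finite connected cover that $\F_d$-fibres. I would replace it by a cover pulled back from $B$, which should be the main obstacle. Let $\widehat{B} \to B$ be the finite cover corresponding to $\pi_1(f)(\pi_1(\widehat{E}))$. Then $\widehat{E}$ maps to the pullback $E_1 = E\times_B\widehat{B}$. The fibre of $\widehat{E}\to\widehat{B}$ is a finite cover $\widehat{F}$ of $F$, corresponding to $\pi_1(\widehat{E})\cap\pi_1(F)$, and $\pi_1(\widehat{F}) \to \pi_1(\widehat{E})$ is injective. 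To apply~\ref{the:fibring_and_fibrations:d_is_injective} to $\widehat{F}\to\widehat{E}\to\widehat{B}$, I need three things. First, $\widehat{F}$ is of type $\F_{d-1}$; this holds as a finite cover. Second, the transgression $H^1(\widehat{F};\IQ)^{\pi_1(\widehat{B})}\to H^2(\widehat{B};\IQ)$ is injective. Here $H^1(q;\IQ)$ is bijective, so $H^1(\widehat{F};\IQ)$ is identified with $H^1(F;\IQ)$ with trivial action. Naturality of the Leray--Serre spectral sequence for the map of fibrations to $F\to E\to B$ gives a commutative square with $H^2(B;\IQ)\to H^2(\widehat{B};\IQ)$. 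That map is injective by transfer, since the cover is finite and we work over $\IQ$. So the composite, and hence the new $\tau$, is injective. Third, I need the cover to preserve $\F_d$-fibring, which holds since $\widehat{E}$ itself $\F_d$-fibres. Then~\ref{the:fibring_and_fibrations:d_is_injective} gives that $\widehat{B}$ $\F_d$-fibres, so $B$ virtually $\F_d$-fibres. The delicate step is showing that $\widehat{E}\to\widehat{B}$ is genuinely a fibration with connected fibre $\widehat{F}$ and checking the naturality of $\tau$.
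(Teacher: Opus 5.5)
Your proposal is correct and follows the paper's proof essentially step for step. Assertions (1) and (2) go through Lemma~\ref{lem:order_of_Q_B_and_transgression} and Lemma~\ref{lem:fibrations_and_finite_type} applied to the induced fibration on coverings. Assertion (3) passes to the fibration $\widehat{F} \to \widehat{E} \to \widehat{B}$ over the finite cover $\widehat{B}$ determined by $\pi_1(f)(\pi_1(\widehat{E}))$, and checks injectivity of its transgression via naturality and the transfer.

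The step you flag as delicate is settled as follows. Since $\widehat{E} \to E\times_B\widehat{B}$ is a finite covering, $\widehat{E}\to\widehat{B}$ is a composite of fibrations. Its fibre $\widehat{F}$ is connected because, as subgroups of $\pi_1(E)$, $\pi_1(E\times_B\widehat{B}) = \pi_1(F)\cdot\pi_1(\widehat{E})$.
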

\begin{proof}~\ref{the:fibring_and_fibrations:d_is_injective} Suppose that $E$
  $\F_d$-fibres.  Let $\phi_E \colon \pi_1(E) \to \IZ$ be a surjective epimorphism such
  that $E_{\overline{\phi_E}}$ is of type $\F_d$.  Since the transgression map
  $\tau \colon H^1(F;\IQ) \to H^2(B;\IQ)$ is injective, $Q_F$ is trivial and the map
  $\beta \colon Q_B \to \IZ$ is an isomorphism by
  Lemma~\ref{lem:order_of_Q_B_and_transgression}~%
\ref{lem:order_of_Q_B_and_transgression:transgression_injective}.  The fibration
  of~\eqref{induced_fibrations_on_the_coverings} reduces to a fibration
  $F \to \overline{E} _{\phi_E}\to \overline{B}_{\phi_B}$ and
  $q_{\Phi_B} \colon \overline{B}_{\phi_B} \to B$ is an infinite cyclic covering. Since
  $F$ is of type $\F_{d-1}$ by assumption, we conclude from
  Lemma~\ref{lem:fibrations_and_finite_type}~\ref{lem:fibrations_and_finite_type:F_E_to_B_F_d}
  that $\overline{B}_{\phi_B}$ is of  type $\F_d$. Hence $B$ $\F_d$-fibres.
  \\[1mm]~\ref{the:fibring_and_fibrations:d_is_not_injective}.  We conclude from
  Lemma~\ref{lem:order_of_Q_B_and_transgression}~%
\ref{lem:order_of_Q_B_and_transgression:transgression_not_injective} that there exists a
  group epimorphism $\phi_E \colon \pi_1(E) \to \IZ$ and that $Q_F$ is infinite cyclic and
  $Q_B$ is finite.  Since $B$ is of type $\F_d$ and $\overline{B}_{\phi_B} \to B$ is a
  finite covering, $\overline{B}_{\phi_B}$ is of type $\F_d$. Since
  $\overline{F}_{\phi_F} \to F$ is an infinite cyclic covering of $F$ and hence
  $\overline{F}_{\phi_F}$ is of type $\F_d$ by assumption,
  Lemma~\ref{lem:fibrations_and_finite_type}~\ref{lem:fibrations_and_finite_type:F_B_to_E_F_d}
  applied to the fibration of~\eqref{induced_fibrations_on_the_coverings} shows that
  $\overline{E}_{\phi_E}$ is of type $\F_d$.  Hence $E$ $\F_d$-fibres.
  \\[1mm]~\ref{the:fibring_and_fibrations:d_injective_plus_extra} Let
  $q_E \colon \widehat{E} \to E$ be a finite covering of $E$ with path connected total
  space such that $\widehat{E}$ $\F_d$-fibres. Let $q_B \colon \widehat{B} \to B$ be the
  finite covering of $B$ with a path connected total space $\widehat{B}$ uniquely
  determined by the property that the image of
  $\pi_1(q_B) \colon \pi_1(\widehat{B}) \to \pi_1(B)$ agrees with the image of
  $\pi_1(f \circ q_E) \colon \pi_1(\widehat{E}) \to \pi_1(B)$.  Choose a map
  $\widehat{f} \colon \widehat{E} \to \widehat{B}$ with
  $q_B \circ \widehat{f} = f \circ q_E$. Consider the pullback
  \[
    \xymatrix{\overline{E} \ar[r]^{\overline{f}} \ar[d]_{\overline{q_B}} & \widehat{B}
      \ar[d]^{q_B}
      \\
      E \ar[r]_f & B.  }
  \]
  Because of the pullback property there is precisely one map
  $q_{\overline{E}} \colon \widehat{E} \to \overline{E}$ satisfying
  $\overline{f} \circ q_{\overline{E}} = \widehat{f}$ and
  $\overline{q_B} \circ \overline{q_E} = q_E$.  As $\overline{q_B} $ and $q_E$ are
  coverings, $q_{\overline{E}} \colon \widehat{E} \to \overline{E}$ is a covering.  We
  have the fibration
  $\overline{F} \xrightarrow{i} \overline{E} \xrightarrow{\overline{f}} \widehat{B}$ given
  by the pullback of $f$ with $q_B$.  Define a covering $q_F \colon \widehat{F} \to F$ by
  the pullback
  \[
    \xymatrix{\widehat{F} \ar[r]^{\overline{i}} \ar[d]_{q_F} & \widehat{E}
      \ar[d]_{q_{\overline{E}}}
      \\
      F \ar[r]_i & \overline{E}.  }
  \]
  Then $q_F \colon \widehat{F} \to F$ is a finite covering.  Since $F$, $\overline{E}$,
  and $\widehat{E}$ are path connected and
  $\pi_1(i) \colon \pi_1(F) \to \pi_1(\overline{E})$ is surjective, $\widehat{F}$ is path
  connected. Hence we get a fibration
  $\widehat{F} \xrightarrow{\widehat{j}} \widehat{E} \xrightarrow{\widehat{f}
  }\widehat{B}$ and a commutative diagram
  \[
    \xymatrix@!C=4em{\widehat{F} \ar[r]^{\widehat{i}} \ar[d]^{q_F} & \widehat{E}
      \ar[r]^{\widehat{f}} \ar[d]^{q_E} & \widehat{B} \ar[d]^{\id_{\widehat{B}}}
      \\
      F \ar[r]_i \ar[d]^{\id_F} & \overline{E} \ar[r]_{\overline{f}}
      \ar[d]^{\overline{q_B}} & \widehat{B} \ar[d]^{q_B}
      \\
      F \ar[r] & E \ar[r]^f & B.  }
  \]
  such that all spaces are path connected, all rows are fibrations, all vertical arrows
  are finite coverings, and the lower right and the upper left square are pullbacks.
  
  The map $H^1(q_F;\IQ) \colon H^1(F;\IQ) \to H^1(\widehat{F};\IQ)$ is bijective and
  $\pi_1(B)$ acts trivially on $H^1(F;\IQ)$ by assumption. Given
  $x \in \pi_1(\widehat{B})$, let $\overline{x}$ be its image under the injective map
  $\pi_1(q_B) \colon \pi_1(\widehat{B}) \to \pi_1(B)$ and we get a commutative diagram
  \[\xymatrix{H^1(F;\IQ) \ar[r]^{l_{\overline{x}}}\ar[d]_{H^1(q_F;\IQ)}^{\cong} &
      H^1(F;\IQ) \ar[d]^{H^1(q_F;\IQ)}_{\cong}
      \\
      H^1(\widehat{F};\IQ) \ar[r]^{l_x} & H^1(\widehat{F};\IQ) }
  \]
  where $l_{\widehat{x}}$ and $l_x$ come from the $\pi_1(B)$ and
  $\pi_1(\widehat{B})$-actions.  Hence $\pi_1(\widehat{B})$ acts trivially on
  $H^1(\widehat{F};\IQ)$.  If $\overline{\tau} \colon H^1(F;\IQ) \to H^1(\widehat{B},\IQ)$
  is the transgression map associated to the fibration
  $\F\to \overline{E} \xrightarrow{\overline{f}} \widehat{B}$ and
  $\widehat{\tau} \colon H^1(\widehat{F};\IQ) \to H^1(\widehat{B},\IQ)$ is the
  transgression map associated to the fibration
  $\widehat{F} \to \widehat{E} \xrightarrow{\widehat{f}} \widehat{B}$, then we get a
  commutative diagram
  \[
    \xymatrix{ H^1(F;\IQ) \ar[r]^-{\tau}\ar[d]_{\id_{H^1(F;\IQ)}}^{\cong} & H^1(B,\IQ)
      \ar[d]^{H^1(q_B;\IQ)}
      \\
      H^1(F;\IQ) \ar[r]_-{\overline{\tau}} \ar[d]_{\id_{H^1(q_F;\IQ)}}^{\cong} &
      H^1(\widehat{B},\IQ) \ar[d]^{\id_{H^1(\widehat{B};\IQ)}}
      \\
      H^1(\widehat{F};\IQ) \ar[r]_-{\widehat{\tau}} & H^1(\widehat{B},\IQ).  }
  \]
  Recall that $H^1(q_F;\IQ)$ is bijective by assumption.  The composite of the two right
  vertical arrows is injective, as $q_B$ is a finite covering.  The upper horizontal arrow
  is injective by assumption. Hence the lower vertical arrow $\widehat{\tau}$ is
  injective. Since $q_F$ is a finite covering and $F$ is of type $\F_{d-1}$ by assumption,
  $\widehat{F}$ is of type $\F_{d-1}$. Now
  assertion~\ref{the:fibring_and_fibrations:d_is_injective} applied to the fibration
  $\widehat{F} \to \widehat{E} \to \widehat{B}$ implies that $\widehat{B}$ $\F_d$-fibres.
  Hence $B$ virtually $\F_d$-fibres.
\end{proof}

\begin{remark}\label{rem:conditions_on_F}
  The condition on $F$ appearing in Theorem~\ref{the:fibring_and_fibrations}~%
\ref{the:fibring_and_fibrations:d_injective_plus_extra} that for every finite covering
  $c \colon \widehat{F} \to F$ with connected total space $\widehat{F}$ the induced
  homomorphism $H^1(c;\IQ) \colon H^1(F;\IQ) \to H^1(\widehat{F};\IQ)$ is bijective, is
  rather restrictive.  It is equivalent to the condition that for any subgroup
  $H \subseteq \pi_1(F)$ of finite index the map $H_1(BH;\IQ) \to H_1(B\pi_1(F);\IQ)$ is
  bijective.  It is satisfied if $\pi_1(F)$ is abelian.
  
  The condition on $F$ appearing in Theorem~\ref{the:fibring_and_fibrations}~%
\ref{the:fibring_and_fibrations:d_is_not_injective} that for every infinite cyclic
  covering $\overline{F} \to F$ with path connected total space $\overline{F}$ the space
  $\overline{F}$ is of type $\F_{d}$ seems to be even more restrictive.  It is satisfied
  if $\pi_1(F)$ is virtually finitely generated abelian and the singular homology
  $H_m(\widetilde{F};\IZ)$ is finitely generated as a $\IZ$-module for every
  $m \in \IZ_{\le d}$.
\end{remark}

Since we are mainly interested in the case that $F$ is an aspherical finite $CW$-complex
and any aspherical finite $CW$-complex with abelian fundamental group is homotopy
equivalent to $T^k$ for some $k \in \IZ_{\ge 0}$, the most interesting case for us is the
one of an orientable $T^k$-fibration.  Since up to fibre homotopy equivalence an
orientable $T^k$-fibration is a principal $T^k$-bundle, we will only consider principal
$T^k$-bundles in the sequel.


\typeout{----- Section 5: Aspherical counterexamples --------------------}


\section{Aspherical counterexamples}\label{sec:Aspherical_counterexamples}

Let $T^k \xrightarrow{j} E \xrightarrow{f} B$ be a principal $T^k$-bundle such that $B$ is
a closed smooth manifold.  Then $E$ is a closed smooth manifold of dimension $k + \dim(B)$
and $\pi_1(B)$ operates trivially on $H^1(T^k;\IQ)$.

Let $T^k \to ET^k \to BT^k$ be the universal principal $T^k$-bundle.  Given a principal
$T^k$-bundle $p \colon E \to B$ over a $CW$-complex $B$, there is a so called
\emph{classifying map} $c_p \colon B \to BT^k$ which is up to homotopy uniquely determined
by the property that the pullback of the universal principal $T^k$-bundle with $c_p$ to
$B$ is isomorphic to $p$.  Since the exact sequence
sequence~\eqref{exact_sequence_coming_fron_Leray-Serre} is natural in $B$ and $ET^k$ is
contractible, we obtain a commutative square of $\IQ$-modules
\[\xymatrix{H^1(T^k;\IQ) \ar[r]^{\tau}_{\cong} \ar[d]_{\id_{H^1(T^k;\IQ) }}^{\cong} &
    H^2(BT^k;\IQ) \ar[d]^{H^2(c_p;\IQ)}
    \\
    H^1(T^k;\IQ) \ar[r]_{\tau} & H^2(B;\IQ) }
\]
where the left vertical arrow and the upper horizontal arrows are bijective.  Hence the
transgression homomorphism $\tau \colon H^1(T^k;\IQ) \to H^2(B;\IQ)$ is injective if and
only if the homomorphism $H^2(c_p;\IQ) \colon H^2(BT^k;\IQ) \to H^2(B;\IQ)$ is injective.

Note for the sequel that a model for $BS^1 = BT^1$ is given by $\IC\IP^{\infty}$ and that
a model for the universal principal $T^k$-bundle $ET^k \to BT^k$ is given by the direct
product of $k$ copies of the universal $S^1$-principal bundle $ES^1 \to BS^1$.

For every infinite cyclic covering $\overline{F} \to T^k$ with path connected total space
$\overline{F}$ the space $\overline{F}$ is homotopy equivalent to $T^{k-1}$ and in
particular of type $\F_{\infty}$.  For any finite covering
$p \colon \widehat{T^n} \to T^n$ the induced map
$H^1(p;\IQ) \colon H^1(T^n;\IQ) \to H^1(\widehat{T^n};\IQ)$ is bijective.

  \begin{theorem}\label{the:fibring_and_principal_T_upper_n_bundles}
    Let $T^k \xrightarrow{j} M \xrightarrow{f} B$ be a principal $T^k$-bundle for
    $k \in \IZ_{\ge 1}$ such that $B$ is a closed smooth manifold of dimension $\dim(B)$
    and $\pi_1(j)$ is injective.

    Then:

    \begin{enumerate}

    \item\label{the:fibring_and_principal_T_upper_n_bundles:aspherical} If we
      additionally assume that $B$ is aspherical, then

      \begin{enumerate}
      \item\label{the:fibring_and_principal_T_upper_n_bundles:aspherical:apherical)} the
        manifold $M$ is an aspherical closed manifold of dimension $k + \dim(B)$

      \item\label{the:fibring_and_principal_T_upper_n_bundles:aspherical:pi_1(j)_injective}
        the map $\pi_1(j)$ is automatically injective.

      \end{enumerate}
    
    \item\label{the:fibring_and_principal_T_upper_n_bundles:vanishing_L_upper_Betti_numbers}
      The $L^2$-Betti number $b_n(\widetilde{M})$ vanishes for every $n \ge 0$;
        
    \item\label{the:fibring_and_principal_T_upper_n_bundles:vanishing_L_upper_F_p_Betti_numbers}
      If we additionally assume that $\pi_1(M)$ is a \RALI-group, then for every field $F$
      and $n \ge 0$ the $L^2$-Betti number $b_n^{(2)}(\widetilde{M};\cald_{F[G]})$
      vanishes:

    \item\label{the:fibring_and_principal_T_upper_n_bundles:B_F_d_fibres_implies_E_fibres}
      Suppose that the map $H^2(c_p;\IQ) \colon H^2(BT^k;\IQ) \to H^2(B;\IQ)$ is
      injective.  Consider any $d \in \IZ_{\ge 0}$.
      If $E$ $\F_d$-fibres, then $B$ $\F_d$-fibres.

    \item\label{the:fibring_and_principal_T_upper_n_bundles:E_fibres} Suppose that the
      map $H^2(c_p;\IQ) \colon H^2(BT^k;\IQ) \to H^2(B;\IQ)$ is not injective.
      Then $E$ $\F_{\infty}$-fibres;

    \item\label{the:fibring_and_principal_T_upper_n_bundles:B_F_d_virtually_fibres_implies_virtually_E_fibres}
      Suppose that the map $H^2(c_p;\IQ) \colon H^2(BT^k;\IQ) \to H^2(B;\IQ)$ is
      injective.
      If $E$ virtually $\F_d$-fibres, then $B$ virtually $\F_d$-fibres.

    \end{enumerate}

  \end{theorem}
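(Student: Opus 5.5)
The plan is to treat the six assertions separately, deriving most of them from Theorem~\ref{the:fibring_and_fibrations} together with the remarks preceding the statement. Those remarks give three facts we will use repeatedly. First, $\pi_1(B)$ acts trivially on $H^1(T^k;\IQ)$. Second, the transgression $\tau$ is injective if and only if $H^2(c_p;\IQ)$ is injective. Third, every infinite cyclic cover of $T^k$ with connected total space is homotopy equivalent to $T^{k-1}$, hence of type $\F_\infty$, and finite covers of $T^k$ induce bijections on $H^1(-;\IQ)$.

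\textbf{Assertion (1).} We use the long exact homotopy sequence of the fibration $T^k \to M \to B$. If $B$ is aspherical, then $\pi_2(B)=0$, so $\pi_1(j)$ is injective. Moreover $\pi_n(M)\cong \pi_n(T^k)\oplus$-extension data of $\pi_n(B)$ vanishes for $n\ge 2$. Hence $M$ is aspherical, and as the total space of a smooth bundle with compact fibre it is a closed manifold of dimension $k+\dim(B)$.

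\textbf{Assertions (2) and (3).} The key input is L\"uck's theorem that a fibration with connected finite fibre $F$ and $\pi_1(F)\to\pi_1(M)$ injective is $L^2$-acyclic whenever $F$ is $L^2$-acyclic, as in \cite{Lueck(2002)}. Here $T^k$ with $k\ge 1$ is $L^2$-acyclic, since $\IZ^k$ is infinite amenable. For a more self-contained route matching the paper's tools, we instead proceed by induction on $k$:
\begin{itemize}
\item Factor the bundle as an $S^1$-bundle $M\to M/S^1$, where $M/S^1$ is the total space of a principal $T^{k-1}$-bundle over $B$.
\item Pass to the infinite-index subgroup $\pi_1(S^1)\cong\IZ \subseteq \pi_1(M)$, which is normal and central.
\item Use the $S^1$-action: the restriction of $C_*(\widetilde M)$ to $\IZ[\IZ]$ is built from copies of $C_*(\widetilde{S^1})$, giving a mapping-torus structure in the sense of Lemma~\ref{lem:elementary_properties_of_mapping_torus}. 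This should yield vanishing, both for $\caln(G)$ and, if $G$ is \RALI, for $\cald_{F[G]}$, via the corresponding statement~\cite[Theorem~3.25]{Avramidi-Lueck(2026)}.
\end{itemize}
The cleanest approach is to cite the $S^1$-action result \cite[Corollary~1.43]{Lueck(2002)}: if a connected finite $CW$-complex carries an $S^1$-action whose orbit inclusion is $\pi_1$-injective, then all its $L^2$-Betti numbers vanish. The analogue over fields follows from the same argument via \cite{Avramidi-Lueck(2026)}. I expect this step to be the main obstacle, since the field version requires checking that the proof of that corollary goes through for $\cald_{F[G]}$.

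\textbf{Assertions (4), (5) and (6).} These follow directly from Theorem~\ref{the:fibring_and_fibrations}, using the equivalence between injectivity of $\tau$ and of $H^2(c_p;\IQ)$.
\begin{itemize}
\item For (4), use part~\ref{the:fibring_and_fibrations:d_is_injective}; this applies because $T^k$ is of type $\F_{d-1}$.
\item For (5), use part~\ref{the:fibring_and_fibrations:d_is_not_injective} with $d=\infty$. Infinite cyclic covers of $T^k$ are of type $\F_\infty$, and $B$ is a closed manifold and hence of type $\F_\infty$.
\item For (6), use part~\ref{the:fibring_and_fibrations:d_injective_plus_extra}. Its hypotheses hold here: $\pi_1(B)$ acts trivially on $H^1(T^k;\IQ)$, and finite covers of $T^k$ induce bijections on $H^1(-;\IQ)$.
\end{itemize}
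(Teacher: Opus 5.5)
This is correct and is essentially the paper's proof. For (2) and (3) the paper cites L\"uck's vanishing theorem for $S^1$-actions whose orbits have infinite image on $\pi_1$ (Theorem~1.40 of L\"uck's book) and its $\cald_{F[G]}$-counterpart for \RALI-groups in Avramidi--L\"uck (their Theorems~3.12(iv) and~3.26(iv)), so the field-version check you flag is already available; (4)--(6) are read off from Theorem~\ref{the:fibring_and_fibrations} via exactly the hypothesis checks you list, and your homotopy-sequence argument for (1) is the standard one the paper leaves implicit. Drop the tentative induction-on-$k$ ``mapping torus'' sketch in favour of the citation: it is unnecessary, and Lemma~\ref{lem:elementary_properties_of_mapping_torus} concerns an epimorphism onto $\IZ$, not a central circle subgroup, so it does not apply there.
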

  \begin{proof}~\ref{the:fibring_and_principal_T_upper_n_bundles:vanishing_L_upper_Betti_numbers}
    This follows from~\cite[Theorem~1.40 on page~42]{Lueck(2002)}.
    \\[1mm]~\ref{the:fibring_and_principal_T_upper_n_bundles:vanishing_L_upper_F_p_Betti_numbers}
    This follows from~\cite[Theorem~3.12~(iv) and
    Theorem~3.26~(iv)]{Avramidi-Lueck(2026)}.  \\[1mm] All the other assertions follow
    from Theorem~\ref{the:fibring_and_fibrations}.
  \end{proof}
  Theorem~\ref{the:fibring_and_principal_T_upper_n_bundles} leads to our favourite
  desired example.

\begin{theorem}\label{the:aspherical_counterexamples_of_T_upper_k-bundles_over_products}
  Consider $k,m \in \IZ$ with $k \ge 1$ and $m \ge 0$.  For $i = 1,2, \ldots, k+m$
  consider an aspherical closed manifold $B_i$ of even dimension $\dim(B_i)$ and
  $d_i \in \IZ_{\ge 0}$ such that
  $b_{d_i}^{(2)}(\widetilde{B_i}) = b_{d_i}^{(2)}(\widetilde{B_i};\caln(\pi_1(B_i)))$ is
  non-trivial and $H^2(B;\IQ)$ is non-trivial. (The Singer Conjecture predicts that this
  can only happen if $2d_i= \dim(B_i)$.)  Let $\pr_i \colon E_i \to B_i$ be any principal
  $S^1$-bundle such that its Euler class $e(\pr_i) \in H^2(B_i;\IZ)$ is sent to a
  nontrivial element under the change of coefficients map $H^2(B_i;\IZ) \to
  H^2(B_i;\IQ)$. Consider the $T^k$-principal bundle $p \colon E \to B$ given by
  \[
    p = \prod_{i = 1}^k p_i \times \prod_{j = k+1}^{k+m} \id_{B_i} \colon M = \prod_{i =
      1}^k E_i \times \prod_{j = k+1}^{k+m} B_j \to B = \prod_{k = 1}^{k+m} B_i.
  \]
  Then we get:

  \begin{enumerate}
  \item\label{the:aspherical_counterexamples_of_T_upper_k-bundles_over_products:apherical}
    The total space $M$ is an aspherical closed manifold and has dimension
    $k + \sum_{i = 1}^{k+m} \dim(B_i)$.

  \item\label{the:aspherical_counterexamples_of_T_upper_k-bundles_over_products:vanishing_L_upper_Betti_numbers}
    The $L^2$-Betti number $b_n(\widetilde{M})$ vanishes for every $n \ge 0$.

  \item\label{the:aspherical_counterexamples_of_T_upper_k-bundles_over_products:vanishing_L_upper_F_p_Betti_numbers}
    If we additionally assume that $\pi_1(M)$ is a \RALI-group, then for every field $F$
    the $L^2$-Betti numbers $b_n^{(2)}(\widetilde{M};\cald_{F[G]})$ vanishes for every
    $n \ge 0$.

  \item\label{the:aspherical_counterexamples_of_T_upper_k-bundles_over_products:E_does_not_F_d}
    The smooth manifold $M$ does not virtually $\F_{d}$-fibre for
    $d = \sum_{i = 1}^{k+m} d_i$ and in particular does not virtually fibre over $S^1$.
  \end{enumerate}
\end{theorem}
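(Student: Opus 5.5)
The plan is to derive items (1)--(3) from Theorem~\ref{the:fibring_and_principal_T_upper_n_bundles}. For item (4), I will use Theorem~\ref{the:fibring_and_principal_T_upper_n_bundles}~\ref{the:fibring_and_principal_T_upper_n_bundles:B_F_d_virtually_fibres_implies_virtually_E_fibres} to push a hypothetical virtual $\F_d$-fibring of $M$ down to $B$. I will then show that $B$ cannot virtually $\F_d$-fibre, because $b_d^{(2)}(\widetilde{B}) \neq 0$.

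\textbf{Step 1: items (1)--(3).} First I check that $p$ is a principal $T^k$-bundle. It is the product of the principal $S^1$-bundles $p_1, \ldots, p_k$ with the identities on $B_{k+1}, \ldots, B_{k+m}$. The base $B = \prod_i B_i$ is a product of aspherical closed manifolds, so it is an aspherical closed manifold. Theorem~\ref{the:fibring_and_principal_T_upper_n_bundles}~\ref{the:fibring_and_principal_T_upper_n_bundles:aspherical} then shows that $\pi_1(j)$ is injective and that $M$ is aspherical of dimension $k + \dim(B)$. Items (2) and (3) are exactly Theorem~\ref{the:fibring_and_principal_T_upper_n_bundles}~\ref{the:fibring_and_principal_T_upper_n_bundles:vanishing_L_upper_Betti_numbers} and~\ref{the:fibring_and_principal_T_upper_n_bundles:vanishing_L_upper_F_p_Betti_numbers}.

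\textbf{Step 2: injectivity of $H^2(c_p;\IQ)$.} The classifying map of $p$ is, up to homotopy, $c_p = \prod_{i=1}^k c_{p_i} \circ \pr$, where $\pr \colon B \to \prod_{i\le k} B_i$ is the projection. Let $x_1, \ldots, x_k$ be the standard generators of $H^2(BT^k;\IQ) \cong \IQ^k$. Then $H^2(c_p;\IQ)$ sends $x_i$ to $\pr_i^* e(p_i)$, the rational Euler class of $p_i$ pulled back along the projection $B \to B_i$. By the Künneth theorem, these classes lie in the distinct direct summands $H^0 \otimes \cdots \otimes H^2(B_i;\IQ) \otimes \cdots \otimes H^0$ of $H^2(B;\IQ)$. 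Each class is non-zero by the assumption on the Euler classes. Hence the classes are linearly independent and $H^2(c_p;\IQ)$ is injective. Theorem~\ref{the:fibring_and_principal_T_upper_n_bundles}~\ref{the:fibring_and_principal_T_upper_n_bundles:B_F_d_virtually_fibres_implies_virtually_E_fibres} then reduces item (4) to showing that $B$ does not virtually $\F_d$-fibre.

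\textbf{Step 3: $B$ does not virtually $\F_d$-fibre.} The Künneth formula for $L^2$-Betti numbers gives
\[
b_d^{(2)}(\widetilde{B}) \ge \prod_{i=1}^{k+m} b_{d_i}^{(2)}(\widetilde{B_i}) > 0.
\]
Multiplicativity under finite coverings then gives $b_d^{(2)}(\widetilde{B'}) \neq 0$ for every finite connected cover $B' \to B$.

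Now suppose $B'$ admits an infinite cyclic cover $\overline{B'}$ of type $\F_d$. Then $\overline{B'}$ is homotopy equivalent to a $CW$-complex with finite $d$-skeleton. So $H_d(\overline{B'};\IQ)$ is a subquotient of a finite-dimensional $\IQ$-vector space and hence finite-dimensional. Lück's mapping-torus argument~\cite{Lueck(1994b)} (compare~\cite[Theorem~1.39]{Lueck(2002)}, and condition~\ref{the:Fisher_Q:(3)} in Theorem~\ref{the:Fisher_Q}) shows that $b_d^{(2)}(\widetilde{B'}) = 0$ whenever $b_d(\overline{B'};\IQ) < \infty$. This contradicts $b_d^{(2)}(\widetilde{B'}) \neq 0$. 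Hence $B$ does not virtually $\F_d$-fibre, and therefore neither does $M$.

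Finally, a genuine fibring of a finite cover of $M$ over $S^1$ has a closed manifold as fibre. That fibre is homotopy equivalent to the infinite cyclic cover, which is therefore of type $\F_\infty \subseteq \F_d$. So $M$ does not virtually fibre over $S^1$.

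\textbf{Main obstacle.} I expect the hardest point to be Step 3. One needs the degree-wise form of Lück's vanishing theorem: finiteness of $b_d(\overline{B'};\IQ)$ alone forces $b_d^{(2)}=0$. This form applies with only the finite $d$-type information available; the full finite-type hypothesis used in Section~\ref{sec:L2-Betti_numbers_are_obstruction_to_fibring_over_S1} is not. I would first try to quote this precisely from~\cite[Theorem~2.1]{Lueck(1994b)}. Failing that, I would run the chain-complex mapping-torus argument of Lemma~\ref{lem:elementary_properties_of_mapping_torus} truncated at degree $d$.
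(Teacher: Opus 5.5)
Your outline matches the paper's proof. Items (1)--(3) come from Theorem~\ref{the:fibring_and_principal_T_upper_n_bundles}, injectivity of $H^2(c_p;\IQ)$ comes from the Künneth theorem, and $b_d^{(2)}(\widetilde{B})\neq 0$ comes from the $L^2$-Künneth formula, so $B$ cannot virtually $\F_d$-fibre. Your explicit appeal to multiplicativity under finite covers also fills in a step the paper leaves tacit.

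\textbf{The gap is in Step~3.} The ``degree-wise form'' you rely on says that $b_d(\overline{B'};\IQ)<\infty$ alone forces $b_d^{(2)}(\widetilde{B'})=0$. This is false. Take $X=\IR P^2\vee S^1$ and let $\phi$ kill $\pi_1(\IR P^2)$. The infinite cyclic cover is a line with a copy of $\IR P^2$ attached at each integer point, so it is rationally acyclic. On the other hand, $\pi_1(X)=\IZ/2*\IZ$ is virtually free with Euler characteristic $-1/2$ and $\chi(X)=0$, so $b_1^{(2)}(\widetilde{X})=b_2^{(2)}(\widetilde{X})=1/2$. Torsion is not the culprit. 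Replacing $\IR P^2$ by the acyclic presentation complex of Higman's group gives a torsionfree example with $b_1^{(2)}\ge 1$. The comparison with Theorem~\ref{the:Fisher_Q}~(3) does not help either, because that result assumes the group is virtually RFRS, and $\pi_1(B)$ need not be. Passing from type $\F_d$ to finiteness of $H_d(\overline{B'};\IQ)$ throws away exactly the information the argument needs.

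\textbf{The fix is your fallback.} It is also what the paper obtains from \cite[Theorem~6.63]{Lueck(2002)}. Suppose $\overline{B'}\simeq Y$ with $Y$ having finite $d$-skeleton. Then $B'\simeq T_h$ for a cellular self-map $h$ of $Y$ realizing the deck transformation. The $n$-fold cyclic cover of $B'$ is homotopy equivalent to $T_{h^n}$. Multiplicativity together with the count of $d$-cells of $T_{h^n}$ gives
\[
n\cdot b_d^{(2)}(\widetilde{B'}) \;=\; b_d^{(2)}(\widetilde{T_{h^n}}) \;\le\; c_d(Y)+c_{d-1}(Y)
\]
for every $n\ge 1$, so $b_d^{(2)}(\widetilde{B'})=0$. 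This uses the extended $L^2$-Betti numbers, since $Y$ need not be of finite type, but it only needs the degree-$d$ chain module to be finitely generated. With Step~3 rewritten this way, your proof is complete.
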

\begin{proof}
  From the K\"unneth formula for $L^2$-Betti numbers, see~\cite[Theorem~1.35~(4) on
  page~37]{Lueck(2002)}, we conclude that $b_d^{(2)}(\widetilde{B})$ is non-trivial for
  $d = \sum_{i = 1}^{k+m} d_i$. Hence $B$ does not $\F_d$-fibre by~\cite[Theorem~6.63 on
  page~270]{Lueck(2002)}. Now one easily checks using the K\"unneth formula that the
  assumptions appearing in Theorem~\ref{the:fibring_and_principal_T_upper_n_bundles}~%
\ref{the:fibring_and_principal_T_upper_n_bundles:B_F_d_virtually_fibres_implies_virtually_E_fibres}
  are satisfied. Hence
  Theorem~\ref{the:aspherical_counterexamples_of_T_upper_k-bundles_over_products} follows
  from Theorem~\ref{the:fibring_and_principal_T_upper_n_bundles}~%
\ref{the:fibring_and_principal_T_upper_n_bundles:aspherical},~%
\ref{the:fibring_and_principal_T_upper_n_bundles:vanishing_L_upper_Betti_numbers},%
\ref{the:fibring_and_principal_T_upper_n_bundles:vanishing_L_upper_F_p_Betti_numbers},
  and~\ref{the:fibring_and_principal_T_upper_n_bundles:B_F_d_virtually_fibres_implies_virtually_E_fibres}.
\end{proof}

  \begin{remark}\label{rem:neither_hyperbolic_nor_virtually_RFRS}
    The fundamental group $\pi_1(M)$ of the manifold $M$ appearing in
    Theorem~\ref{the:fibring_and_principal_T_upper_n_bundles} does not contain a subgroup
    $G$ of finite index such that $G$ is residually (torsionfree and virtually abelian).
    Note that this implies that $G$ is not residually (locally
    indicable and virtually abelian) and hence not a \RFRS-group
    by~\cite[Theorem~6.3]{Okun-Schreve(2024orders)}. In particular $\pi_1(M)$ is not
    virtually {\RFRS}.\hfill \\

   \noindent\emph{Proof of Remark.} Suppose that $\pi_1(M)$ contains a subgroup $G$ of finite index such that $G$ is
    residually (torsionfree and virtually abelian). Then we can find a finite covering
    $p \colon \widehat{M} \to M$ such that $\widehat{M}$ is path connected
    $\pi_1(\widehat{M})$ is residually (torsionfree and virtually abelian).  Consider any
    element $g \in \pi_1(\widehat{M})$ with $g \not= e$. Choose an epimorphism
    $\psi \colon \pi_1(\widehat{M}) \to Q$ for a group $Q$ such that $\psi(g) \not= e$
    holds and $Q$ is torsionfree and virtually abelian.
  
    Since $\pi_1(\widehat{M})$ is finitely generated, $Q$ is finitely generated. Hence
    there exists $d \in \IZ_{\ge 1}$ such that $Q$ contains $\IZ^d$ as a normal subgroup
    of finite index $n = [Q: \IZ^d]$.  Then $\psi(g^n)$ lies in $\IZ^d$ and is different
    from zero.  Choose a group homomorphism $\mu \colon \IZ^d \to \IZ$ such that
    $\mu(\psi(g^n)) \not= 0$ holds.  Hence $\phi = \mu \circ \psi \colon \pi_1(N) \to \IZ$
    is a group homomorphism with $\phi(g^n) \not = 0$.  By passing to
    $\phi \colon \pi_1(\widehat{M}) \to \im(\phi)$ we can arrange that $\phi$ is
    surjective. By inspecting the proof of
    Theorem~\ref{the:fibring_and_fibrations}~\ref{the:fibring_and_fibrations:d_injective_plus_extra},
    we can find a finite covering $\widehat{B} \to B$ with a path connected total space
    $\widehat{B}$ and a principal $T^k$-bundle
    $T^k \xrightarrow{i} \widehat{M} \xrightarrow{\widehat{p}} \widehat{B}$ whose
    transgression map is injective.  Lemma~\ref{lem:order_of_Q_B_and_transgression}~%
\ref{lem:order_of_Q_B_and_transgression:transgression_injective} implies that
    $\phi \colon \pi_1(\widehat{M}) \to \IZ$ factorizes over
    $\pi_1(\widehat{p}) \colon \pi_1(\widehat{M}) \to \pi_1(\widehat{B})$.  Hence the
    image of $\pi_1(i) \colon \pi_1(S^1) \to \pi_1(\widehat{M})$ is contained in the
    kernel of $\phi$. This implies $g^n$ is not contained in the image
    $\pi_1(i) \colon \pi_1(S^1) \to \pi_1(\widehat{M})$ which is an infinite cyclic group
    as $\widehat{B}$ is aspherical.  We conclude that $g$ is not contained in the image of
    $\pi_1(i) \colon \pi_1(S^1) \to \pi_1(\widehat{M})$.  Since $g \in \pi_1(\widehat{M})$
    was an arbitrary element with $g \not = e$, we get a contradiction. \hfill $\square$
  \end{remark}

  \begin{remark}\label{rem:main_heorems_applies}
    In order to get interesting examples from
    Theorem~\ref{the:aspherical_counterexamples_of_T_upper_k-bundles_over_products}, one
    needs examples for the manifolds $B_i$ appearing there.  They can be constructed as
    follows.
    
    Consider $m \in \IZ_{\ge 1}$ and closed orientable surfaces $S_1$, $S_2$, \ldots,
    $S_m$ of genus $\ge 2$.  Then $B = \prod_{i = 1}^m S_i$ is an aspherical closed
    manifold of even dimension $2m$. We conclude $b_m^{(2)}(\widetilde{B}) \not = 0$ from
    K\"unneth formula for $L^2$-Betti numbers, see~\cite[Theorem~1.35~(4) on
    page~37]{Lueck(2002)}, since $b_1^{(2)}(\widetilde{S_i}) \not= 0$ and
    $b_l^{(2)}(\widetilde{S_i}) = 0$ for $l \not = 1$ hold by~\cite[Example~1.36 on
    page~40]{Lueck(2002)}. Obviously $H^2(B;\IQ)$ is non-trivial.
  \end{remark}

\begin{theorem}\label{the:apherical_counterexamples_with_residually_nilpotent_fundamental_group}
  Consider $d \in \IZ_{\ge 3}$.  Then there is an aspherical closed manifold $M$ of
  dimension $d$ with the following properties:

  \begin{enumerate}

  \item\label{the:apherical_counterexamples_with_residually_nilpotent_fundamental_group:virt_nil}
    The fundamental group is residually (torsionfree and nilpotent);

  \item\label{the:the:apherical_counterexamples_with_residually_nilpotent_fundamental_group:vanishing_L_upper_Betti_numbers}
    The $L^2$-Betti number $b_n(\widetilde{M})$ vanishes for every $n \ge 0$;

  \item\label{the:apherical_counterexamples_with_residually_nilpotent_fundamental_group::vanishing_L_upper_F_p_Betti_numbers}
    For every field $F$ the $L^2$-Betti number $b_n^{(2)}(\widetilde{M};\cald_{F[G]})$
    vanishes for every $n \ge 0$;

  \item\label{the:apherical_counterexamples_with_residually_nilpotent_fundamental_group:E_does_not_F_d}
    If $d = 2m+1$ or $d = 2m+ 2$ for $m \in \IZ_{\ge 1}$, then the smooth manifold $M$
    does not virtually $\F_{m}$-fibre and in particular does not virtually fibre over
    $S^1$.

  \end{enumerate}
  
\end{theorem}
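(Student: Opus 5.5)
The plan is to apply Theorem~\ref{the:aspherical_counterexamples_of_T_upper_k-bundles_over_products} with $k=1$, taking the base to be a product of surfaces as in Remark~\ref{rem:main_heorems_applies}. For $d = 2m+1$, let $B = \prod_{i=1}^m S_i$ with each $S_i$ a closed orientable surface of genus $\ge 2$; then $\dim B = 2m$ and $b_m^{(2)}(\widetilde B)\neq 0$. Let $M \to B$ be a principal $S^1$-bundle whose Euler class is rationally nontrivial. A convenient choice is the bundle over $S_1$ with Euler class $1$ (a circle bundle over a surface, i.e.\ a $\widetilde{\SL}_2$-type or Heisenberg-type manifold), pulled back to $B$ along the projection, or equivalently $E_1 \times \prod_{i\ge 2} S_i$.

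For $d = 2m+2$, multiply by an extra circle. One option is $M\times S^1$; the other is to use a rank-$2$ torus bundle with $k=2$ over a product of $m$ surfaces, which gives dimension $2m+2$. I would take $k=2$: $M = E_1\times E_2\times \prod_{i\ge 3} S_i$ when $m\ge 2$. If $m=1$ this is impossible, so for $d=4$ take $E_1\times S^1$ and argue directly; alternatively use $T^2\to M\to S_1$ with classifying map injective on $H^2(BT^2;\IQ)\cong\IQ^2$, which fails since $H^2(S_1;\IQ)=\IQ$. For $d=4$ I would therefore handle $E_1\times S^1$ separately. This is a real case distinction to check, because $E_1\times S^1$ does fibre (via the $S^1$ factor), so the statement for $d=4$ needs a different example. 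For $d=2m+2$ I would instead choose $B = \prod_{i=1}^{m} S_i \times$ (nothing) with $k=2$ and two Euler classes coming from two different surface factors, which requires $m\ge2$. For $m=1$, $d=4$, I would take $T^2$ bundles over $S_1$ to be ruled out and look for a $4$-dimensional example such as a circle bundle over a $3$-dimensional base. \emph{This dimensional bookkeeping is the step I expect to be most delicate}, and I would revisit exactly which products realise each $d$.

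Asphericity, vanishing of the $L^2$-Betti numbers, and non-virtual $\F_m$-fibring then come from Theorem~\ref{the:aspherical_counterexamples_of_T_upper_k-bundles_over_products}, since $\sum d_i = m$. The rational injectivity of $H^2(BT^k;\IQ)\to H^2(B;\IQ)$ follows from the Künneth formula because the Euler classes live on distinct factors.

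It remains to show that $\pi_1(M)$ is residually (torsionfree and nilpotent); this also gives the RALI property needed for the statement about all fields. Each $\pi_1(E_i)$ is a central extension $1\to\IZ\to \pi_1(E_i)\to \pi_1(S_i)\to 1$ with Euler class of the bundle. Here I would use that surface groups are residually torsionfree nilpotent, and then show that the central extension is as well. A clean route is to note that the extension with Euler class $1$ embeds in, or is commensurable with, a group mapping onto the integral Heisenberg-type quotient: the lower central series quotients of $\pi_1(E_i)$ are torsionfree, and the central $\IZ$ survives in the class-$2$ quotient. The reason is that the relator $\prod[a_j,b_j]=z^{e}$ maps the generator $z$ to a nonzero multiple of $\prod [a_j,b_j]$ in the free nilpotent quotient. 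Residual torsionfree nilpotence is preserved under finite direct products, which handles $\pi_1(M)$. Since residually torsionfree nilpotent groups are residually locally indicable amenable, they are RALI, and the $\cald_{F[G]}$ statement follows from Theorem~\ref{the:aspherical_counterexamples_of_T_upper_k-bundles_over_products}.
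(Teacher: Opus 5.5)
Your constructions for $d=2m+1$ and for $d=2m+2$ with $m\ge 2$ are exactly the paper's. These are $E_1\times\prod_{i\ge 2}S_i$ and $E_1\times E_2\times\prod_{i\ge 3}S_i$, fed into Theorem~\ref{the:aspherical_counterexamples_of_T_upper_k-bundles_over_products} with $k=1$ resp.\ $k=2$. Your way of detecting the central $\IZ$ of $\pi_1(E)$ is an equivalent substitute for the paper's pullback of the Heisenberg manifold along a degree-one map $S\to T^2$. You use the homomorphism to the free $2$-step nilpotent group given by $a_j\mapsto a_j$, $b_j\mapsto b_j$, $z\mapsto\prod_j[a_j,b_j]$, which is well defined because the Euler class is $1$.

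However, the case $d=4$ is left unproved, and this is a genuine gap, not bookkeeping. In Theorem~\ref{the:aspherical_counterexamples_of_T_upper_k-bundles_over_products} the total space has dimension $k+\sum_i\dim(B_i)$. This has the parity of $k$ and is at least $3k$, so that theorem can never produce a $4$-manifold. The candidates you consider also all fail:
\begin{itemize}
\item $E_1\times S^1$ fibres over $S^1$.
\item Every principal $T^2$-bundle over a surface $S$ of genus $\ge 2$ even $\F_\infty$-fibres, by Theorem~\ref{the:fibring_and_principal_T_upper_n_bundles}~\ref{the:fibring_and_principal_T_upper_n_bundles:E_fibres}, since $H^2(BT^2;\IQ)\cong\IQ^2\to H^2(S;\IQ)\cong\IQ$ cannot be injective.
\end{itemize}

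The missing idea is to iterate circle bundles: put a principal $S^1$-bundle over the $3$-manifold $E$ itself, chosen so that the transgression stays injective and residual nilpotence survives. The paper does this as follows.
\begin{itemize}
\item Construction. Let $q'\colon E'\to T^2$ be the Heisenberg manifold, $f\colon S\to T^2$ a map of degree one, and $E=f^*E'$ with bundle map $\overline f\colon E\to E'$. Since $H^2(E';\IZ)\cong\IZ^2$, choose a principal $S^1$-bundle $E''\to E'$ with rationally nontrivial Euler class. Then $\pi_1(E'')$ is a central extension of the Heisenberg group by $\IZ$, hence torsionfree nilpotent. Put $M=\overline f^{\,*}E''$, a principal $S^1$-bundle over $E$.
\item Residual nilpotence. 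The fibre subgroup $\IZ\subseteq\pi_1(M)$ injects into $\pi_1(E'')$, and $\pi_1(E)$ is residually torsionfree nilpotent, so $\pi_1(M)$ is too.
\item Injective transgression. The Euler class of $M\to E$ is $\overline f^{\,*}e(E''\to E')$. It is rationally nonzero because, via the Gysin sequences, $\overline f^{\,*}\colon H^2(E';\IQ)\to H^2(E;\IQ)$ is identified with $f^*\colon H^1(T^2;\IQ)\to H^1(S;\IQ)$, which is injective for a degree-one map.
\item Non-fibring. Apply Theorem~\ref{the:fibring_and_principal_T_upper_n_bundles}~\ref{the:fibring_and_principal_T_upper_n_bundles:B_F_d_virtually_fibres_implies_virtually_E_fibres} twice. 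First apply it to $E\to S$, where $S$ does not virtually $\F_1$-fibre because $b_1^{(2)}(\widetilde S)\neq 0$. Then apply it to $M\to E$. Hence $M$ does not virtually $\F_1$-fibre.
\item Remaining properties. Asphericity and the vanishing of all $L^2$-Betti numbers follow from the other parts of that theorem, since $E$ is aspherical.
\end{itemize}
Without such a construction, your argument covers every $d\ge 3$ except $d=4$.
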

\begin{proof} We begin with the case $d \not = 4$.  Consider a principal $S^1$-bundle
  $q' \colon E' \to T^2$ whose Euler class $e(q)$ is a generator of $H^2(T^2;\IZ)$.  Let
  $S$ be any surface of genus $\ge 2$ and $f \colon S \to T^2$ be a map of degree $1$.
  Let $q \colon E \to S$ be the principal $S^1$-bundle given by the pullback of $q'$ with
  $f$.
  \[
    \xymatrix@!C=3em{E \ar[r]^-{\overline{f}} \ar[d]_{q} & E' \ar[d]^{q'}
      \\
      S \ar[r]_-{f} & T^2.  }
  \]
  Then the Euler class $e(q)$ of $q$ is a generator of $H^2(S;\IZ)$ and in particular its
  image under the change of coefficients map $H^2(S;\IZ) \to H^2(S;\IQ)$ is non-trivial.

  We obtain the following commutative diagram of groups with central extensions as rows
  \[
    \xymatrix@!C=4em{1 \ar[r] & \pi_1(S^1) \ar[d]^{\id_{\pi_1(S^1)}} \ar[r]^{\pi_1(i)} &
      \pi_1(E) \ar[d]^{\pi_1(\overline{f})} \ar[r]^{\pi_1(q)} & \pi_1(S) \ar[d]^{\pi_1(f)}
      \ar[r] & 1
      \\
      1 \ar[r] & \pi_1(S^1) \ar[r]^{\pi_1(i')} & \pi_1(E') \ar[r]^{\pi_1(q')} & \pi_1(T^2)
      \ar[r] & 1.  }
  \]
  where $i$ and $i'$ denote the inclusions.  The group $\pi_1(E')$ is torsionfree and
  nilpotent. It is actually the three-dimensional Heisenberg group. The injective group
  homomorphism $\pi_1(\overline{f} \circ i) = \pi_1(i') \colon \pi_1(S^1) \to \pi_1(E')$
  sends any element different from the unit to an element in $ \pi_1(E')$ different from
  the unit.  Hence any element different from the unit in the kernel of $\pi_1(q)$ is sent
  to a non-trivial element in $\pi_1(E')$.  Since $\pi_1(S)$ is known to be residually
  (torsionfree and nilpotent), $\pi_1(E)$ is residually (torsionfree and nilpotent).

  If $d = 2m+1$ for $m \ge 1$, consider the principal $S^1$-bundle
  \[
    p = q \times \prod_{i = 1} ^{m-1} \id_S \colon E \times \prod_{i = 1} ^{m-1} S \; \to
    \; S \times \prod_{i = 1} ^{m-1} S
  \]
  and, if $d = 2m + 2$ for $m \ge 2$, consider the the principal $T^2$-bundle
  \[
    p =q \times q \times \prod_{i = 1} ^{m-1} \id_S \colon E \times E \times \prod_{i = 1}
    ^{m-1} \id_S \colon E \times E \times\prod_{i = 1} ^{m-1} S \; \to \;S \times S \times
    \prod_{i = 1} ^{m-1} S.
  \]
  Since $\pi_1(E)$ and $\pi_1(S)$ are residually (torsionfree and nilpotent) and finite
  products of residually (torsionfree and nilpotent) groups are again residually
  residually (torsionfree and nilpotent), the total space of $p$ has in both cases a
  residually (torsionfree and nilpotent) fundamental group.  Now the
  Theorem~\ref{the:apherical_counterexamples_with_residually_nilpotent_fundamental_group}
  follows from
  Theorem~\ref{the:aspherical_counterexamples_of_T_upper_k-bundles_over_products} applied
  to $p$ if we take $k = 1$ or $k = 2$ and $B_i = S$ for $i = 1,2, \ldots, (m+1)$.

  The proof in dimension $4$ is slightly more complicated since we cannot just take
  principal $T^2$-bundle $E \to S$ over $S$ since such $E$ will virtually fibre over $S^1$
  by
  Theorem~\ref{the:fibring_and_fibrations}~\ref{the:fibring_and_fibrations:d_is_not_injective}.
  Since $H^2(E';\IZ)$ is isomorphic to $\IZ^2$ for the $S^1$-principal bundle
  $q' \colon E' \to T^2$ above, we can find a principal $S^1$-bundle
  $q'' \colon E'' \to E'$ whose Euler class is mapped under $H^2(E';\IZ) \to H^1(E';\IQ)$
  to a non-trivial element. We obtain a central extension
  $1 \to \IZ \to \pi_1(E'') \to \pi_1(E') \to 1$. Since $\pi_1(E')$ is torsionfree and
  nilpotent, $\pi_1(E'')$ is torsionfree and nilpotent.

  Consider the pullback
  \[
    \xymatrix{M \ar[r] ^{\overline{\overline{f}}} \ar[d]_{\overline{q''}} & E''
      \ar[d]^{q''}
      \\
      E \ar[r]_{\overline{f}} & E'.  }
  \]
  We obtain the following commutative diagram of groups with central extensions as rows
  \[
    \xymatrix@!C=4em{1 \ar[r] & \pi_1(S^1) \ar[d]^{\id_{\pi_1(S^1)}} \ar[r] & \pi_1(M)
      \ar[d]^{\pi_1(\overline{f})} \ar[r]^{\pi_1(\overline{\overline{f}})} & \pi_1(E'')
      \ar[d]^{\pi_1(\overline{q''})} \ar[r] & 1
      \\
      1 \ar[r] & \pi_1(S^1) \ar[r] & \pi_1(E) \ar[r]^{\pi_1(\overline{f})} & \pi_1(E')
      \ar[r] & 1.  }
  \]
  Since $\pi_1(E)$ is residually (torsionfree and nilpotent) and $\pi_1(E'')$ is
  torsionfree and nilpotent, we can argue as above to show that $\pi_1(M)$ is residually
  (torsionfree and nilpotent).  Now we can apply
  Theorem~\ref{the:fibring_and_principal_T_upper_n_bundles}~%
\ref{the:fibring_and_principal_T_upper_n_bundles:B_F_d_fibres_implies_E_fibres} twice,
  namely to $q \colon E \to S$ and then to $\overline{q''} \colon M \to E$, and conclude
  that $E$ does not $\F_1$-fibre and then that $M$ does not $\F_1$-fibre.
\end{proof}

Note that in the proof above in dimension $4$ the composite
$q \circ \overline{q''} \colon M \to S$ is a locally trivial $T^2$-bundle.  One easily
checks using Lemma~\ref{lem:transgression_and_H_upper_1(p;Q)} that its transgression map
is injective which implies that the $\pi_1(S)$-action on $H_1(T^2;\IQ)$ is non-trivial and
it is not a principal $T^2$-bundle.  This is consistent with
Theorem~\ref{the:fibring_and_fibrations}.

\begin{remark}
Here is an alternative construction of the nilmanifold $E''$ expressed as a non-principal
$T^2$-bundle over $T^2$ that plays a crucial role in the proof in the 4-dimensional case.
For $a,b,c,d\in \IR$, let $A(a,b,c,d)$ be the matrix
\[A(a,b,c,d):=\begin{pmatrix}
    1&a&c&d \\
    0&1&b&b(b-1)/2 \\
    0&0&1&b \\
    0&0&0&1 \end{pmatrix}, \] and for $R=\IZ$, $R=\IR$, let $\IG(R)$ be the set of all
matrices $A(a,b,c,d)$ for $a,b,c,d\in R$.  It is easily verified that $\IG(R)$ is a group,
whose abelianization is isomorphic to the additive group $R\times R$.  The kernel of the
map to the abelianization is the matrices of the form $A(0,0,c,d)$, which form a subgroup
also isomorphic to $R\times R$.  The nilpotent Lie group $\IG(\IR)$ is clearly
homeomorphic to $\IR^4$, and $\IG(\IZ)$ is a discrete subgroup.  Hence the coset space
$\IG(\IZ)\backslash \IG(\IR)=E''$ is an aspherical 4-manifold with fundamental group the
torsion-free nilpotent group $\IG(\IZ)$.  If we define matrices $\alpha=A(1,0,0,0)$,
$\beta=A(0,1,0,0)$, $\gamma=A(0,0,1,0)$ and $\delta=A(0,0,0,1)$, then $\IG(\IZ)$ is
generated by $\alpha$ and $\beta$.  To see this, the four elements together clearly
generate, but also note that $\gamma=[\alpha,\beta]$, and that $\delta=[\gamma,\beta]$,
where use the convention that $[g,h]=g^{-1}h^{-1}gh$.  The matrix $\delta$ generates the
infinite cyclic centre of $\pi_1(E'')$, while $\gamma$ and $\delta$ together generate the
free abelian commutator subgroup.  The fact that $\gamma$ is not central shows that $E''$
is a non-principal $T^2$-bundle over $T^2$.  A presentation for $\IG(\IZ)$ in terms of
$\alpha$ and $\beta$ is given by above is given by
\[\langle \alpha,\beta,\,\,:\,\, [[\alpha,\beta],\alpha]=1=
  [[[\alpha,\beta],\beta],\beta]\rangle.\]
\end{remark}

\begin{example}\label{exa:B-a_surface}
  Let $B$ be an orientable connected closed surface of genus $g \ge 2$. Then $B$ is
  hyperbolic and the $L^2$-Betti number $b_1^{(2)}(\widetilde{B})$ is $2g -2$ and hence
  non-zero.  Obviously $H^2(B;\IZ) \cong \IZ$. For any non-trivial $e \in H^2(B;\IZ)$ we
  obtain a principal $S^1$-bundle $p_e \colon M_e \to B$.  Note that $M_e$ is an
  orientable closed Seifert $3$-manifold whose geometry is $\widetilde{\SL_2(\IR)}$.
  Theorem~\ref{the:apherical_counterexamples_with_residually_nilpotent_fundamental_group}
  implies that all $L^2$-Betti numbers of $\widetilde{M_e}$ vanish, $\pi_1(M)$ is
  residually (torsionfree and nilpotent) and that $M_e$ does not virtually fibre. Actually
  Theorem~\ref{the:apherical_counterexamples_with_residually_nilpotent_fundamental_group}
  shows that for every subgroup $G \subseteq \pi_1(M_e)$ of finite index and every
  surjective map $\phi \colon G \to \IZ$ the kernel of $\phi$ is not finitely
  generated. Moreover, the fundamental group of $M_e$ cannot be {\RFRS}. This follows from
  of Kielak's Theorem~\ref{the:Kielak} or directly from
  Remark~\ref{rem:neither_hyperbolic_nor_virtually_RFRS}.
\end{example}

  \begin{remark}\label{rem:RFRS_cannot_ve_replaced_by_RFRS}
    Example~\ref{exa:B-a_surface} shows that the condition $\RFRS$ cannot be weakened to
    residually (torsionfree nilpotent) in Kielak's Theorem~\ref{the:Kielak}. Note that
    such a generalisation was alluded to in~\cite[pages~17--18]{Kielak(2025ICM)}, asked
    about in~\cite[Question~7.2]{Fisher-Klinge(2024)}, and conjectured
    in~\cite[Conjecture~1.4]{Escartin-Ferrer(2025)}.

    One can also conclude by inspecting the proof of
    Theorem~\ref{the:fibring_and_fibrations} that the condition $\RFRS$ cannot be
    weakened to residually (torsionfree and nilpotent) in Fisher's
    Theorems~\ref{the:Fisher_Q} and~\ref{the:Fisher_F}.  The details of the proof are left
    to the reader.  At least we mention that a standard spectral sequence argument using
    Serre classes becomes relevant, which replaces the application
    of~\cite[Lemma~7.2]{Lueck(1997a)} in the proof of
    Theorem~\ref{the:fibring_and_fibrations}.
  \end{remark}

\subsection{Detection of non-fibring by other coverings than the universal one}%
\label{subsec:Detection_of_non-fibring_by_other_coverings_than_the_universal_one}

One can detect the non-fibring in the situation of
Theorem~\ref{the:fibring_and_principal_T_upper_n_bundles} by $L^2$-Betti numbers if one
is willing to consider other coverings than the universal covering.  Note that
Theorem~\ref{the:counterexamples_to_fibring_aspherical_non_universal} is weaker than
Theorem~\ref{the:fibring_and_principal_T_upper_n_bundles} since in
Theorem~\ref{the:counterexamples_to_fibring_aspherical_non_universal} we can deal with
property $\F_m$ for a specific $m$ and we have no control over the $n$ appearing in
assertion~\ref{the:counterexamples_to_fibring_aspherical_non_universal:non_vanishing} of
Theorem~\ref{the:counterexamples_to_fibring_aspherical_non_universal}.

\begin{theorem}\label{the:counterexamples_to_fibring_aspherical_non_universal}
  Consider an aspherical closed manifold $B$ of even dimension such that $\chi(B)$ is
  non-trivial and $H^2(B;\IQ)$ is non-trivial. Let $\pr \colon M \to B$ be any principal
  $S^1$-bundle such that its Euler class $e(\pr) \in H^2(B;\IZ)$ is sent to a nontrivial
  element under the change of coefficients map $H^2(B;\IZ) \to H^2(B;\IQ)$. Then:

  \begin{enumerate}
  \item\label{the:counterexamples_to_fibring_aspherical_non_universal:aspherical} The
    manifold $M$ is aspherical and closed and has dimension $1 + \dim(B)$;
   
  \item\label{the:counterexamples_to_fibring_aspherical_non_universal:non_vanishing}
    Consider any finite covering $p \colon N \to M$ with connected total space $N$ and any
    epimorphism $\phi \colon \pi_1(N) \to \IZ$. Let $\overline{N} \to N$ be the infinite
    cyclic covering associated to $\phi$. Then $b_n^{(2)}(\overline{N};\caln(\IZ))$ is
    non-trivial for at least one $n \in \IZ_{\ge 0}$;
  
  \item\label{the:counterexamples_to_fibring_aspherical_non_universal:non_virtually_fibring}
    $M$ does not virtually fibre over $S^1$.
  \end{enumerate}
\end{theorem}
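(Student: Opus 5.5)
Assertion~\ref{the:counterexamples_to_fibring_aspherical_non_universal:aspherical} comes from the long exact homotopy sequence of the bundle $S^1 \to M \to B$: $B$ is aspherical and the fibre is $S^1$, so $\pi_n(M)=0$ for $n \ge 2$. The dimension count is immediate and $M$ is a closed manifold. This is the case $k=1$ of Theorem~\ref{the:fibring_and_principal_T_upper_n_bundles}~\ref{the:fibring_and_principal_T_upper_n_bundles:aspherical}, which also shows that $\pi_1(j)$ is injective. Assertion~\ref{the:counterexamples_to_fibring_aspherical_non_universal:non_virtually_fibring} will follow from assertion~\ref{the:counterexamples_to_fibring_aspherical_non_universal:non_vanishing}. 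If $N$ fibres over $S^1$ via $\phi$, then $\overline{N}$ is homotopy equivalent to the closed fibre, a finite $CW$-complex. For the trivial action of $\IZ$ on a finite-type complex, the $L^2$-Betti numbers of $\overline{N}$ over $\caln(\IZ)$ would be the ordinary Betti numbers of the complex with trivial $\IZ$-action induced up. More directly: a finite-type free $\IZ[\IZ]$-complex coming from a mapping torus situation has vanishing $L^2$-Betti numbers over $\caln(\IZ)$ by the chain-complex version of~\cite[Theorem~2.1]{Lueck(1994b)} used in Lemma~\ref{lem:elementary_properties_of_mapping_torus}. That contradicts~\ref{the:counterexamples_to_fibring_aspherical_non_universal:non_vanishing}.

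For the main assertion~\ref{the:counterexamples_to_fibring_aspherical_non_universal:non_vanishing} I would first reduce to the case $N=M$ up to renaming. As in the proof of Theorem~\ref{the:fibring_and_fibrations}~\ref{the:fibring_and_fibrations:d_injective_plus_extra}, a finite covering $N \to M$ sits in a principal $S^1$-bundle $\widehat{S^1} \to N \to \widehat{B}$ over a finite covering $\widehat{B} \to B$. Here $\widehat{B}$ is aspherical, $\chi(\widehat{B}) = [\pi_1(B):\pi_1(\widehat{B})]\cdot\chi(B) \ne 0$, and the transgression is still injective. So it suffices to treat $N = M$ with $\phi \colon \pi_1(M) \to \IZ$ arbitrary. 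By Lemma~\ref{lem:order_of_Q_B_and_transgression}~\ref{lem:order_of_Q_B_and_transgression:transgression_injective}, using the correspondence between injectivity of $\tau$ and of $H^2(c_p;\IQ)$ (the Euler class is rationally nonzero), $\phi$ kills the fibre. Hence $\phi = \phi_B \circ \pi_1(\pr)$ with $\phi_B \colon \pi_1(B) \to \IZ$ surjective, and $\overline{N} \to \overline{B}$ is a principal $S^1$-bundle over the infinite cyclic covering $\overline{B}$ of $B$.

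Now compute $L^2$-Betti numbers over $\caln(\IZ)$. The Euler characteristic should supply the nonvanishing. Since $\overline{B}$ is the $\IZ$-covering of the finite complex $B$, its $L^2$-Euler characteristic over $\caln(\IZ)$ is $\chi(B) \ne 0$. Hence some $b_n^{(2)}(\overline{B};\caln(\IZ)) \ne 0$, and in particular $\overline{B}$ is not of finite type with vanishing $L^2$-Betti numbers. But I need a statement about $\overline{N}$, and here the circle fibre makes $\chi^{(2)}(\overline{N})=0$, so Euler characteristic alone fails. \textbf{This is the main obstacle.} I would instead use the Gysin/Leray--Serre sequence for the $S^1$-bundle $\overline{N} \to \overline{B}$ with coefficients in $\caln(\IZ)$:
\[
\cdots \to H_n^{(2)}(\overline{N}) \to H_n^{(2)}(\overline{B}) \xrightarrow{\cap e} H_{n-2}^{(2)}(\overline{B}) \to H_{n-1}^{(2)}(\overline{N}) \to \cdots
\]
This sequence is exact in the dimension-exact sense. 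Suppose all $b_n^{(2)}(\overline{N};\caln(\IZ))$ vanished. Then capping with the Euler class would be a dimension-isomorphism $H_n^{(2)}(\overline{B}) \to H_{n-2}^{(2)}(\overline{B})$ for all $n$. Iterating downward from $n$ above $\dim B$, where the groups vanish, would force all $b_n^{(2)}(\overline{B})$ to vanish. That contradicts $\chi(B) \ne 0$.

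The delicate points are the finiteness of these von Neumann dimensions, which holds since $B$ is finite, and the exactness of the Gysin sequence in the $\caln(\IZ)$ setting. For the latter I would derive the sequence from the mapping cone description of the $S^1$-bundle at chain level, using additivity of von Neumann dimension.
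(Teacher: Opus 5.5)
Your proposal is correct and follows essentially the paper's proof. As in the proof of Theorem~\ref{the:fibring_and_fibrations}~\ref{the:fibring_and_fibrations:d_injective_plus_extra}, you pass to a principal $S^1$-bundle $N \to \widehat{B}$ over a finite cover through which $\phi$ factors. You then use the Gysin sequence of the pulled-back bundle $\overline{N} \to \overline{B}$ to force all $\caln(\IZ)$-Betti numbers of $\overline{B}$, and hence $\chi(\widehat{B})$, to vanish. The paper phrases this by applying the localization $S^{-1}\IZ[\IZ]$ to the Gysin sequence of $\IZ[\IZ]$-modules rather than using dimension-exactness over $\caln(\IZ)$, which for the group $\IZ$ amounts to the same thing.

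In your argument for (3), discard the first justification: the $\IZ$-action on $\overline{N}$ is not trivial, and an induced-up finite complex would in general have nonzero $L^2$-Betti numbers. Your second justification is the right one. It can be either the mapping-torus lemma with $K$ trivial, or the observation that finite-dimensional $H_n(\overline{N};\IQ)$ is a torsion $\IQ[\IZ]$-module. This is also what the paper obtains by citing L\"uck's Theorem~6.63.
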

\begin{proof}~\ref{the:counterexamples_to_fibring_aspherical_non_universal:aspherical}
  This is clear from the homotopy long exact sequence for the fibration.
  \\[1mm]~\ref{the:counterexamples_to_fibring_aspherical_non_universal:non_vanishing} As
  explained in the proof of By inspecting the proof of
  Theorem~\ref{the:fibring_and_fibrations}, which in fact implies
  Theorem~\ref{the:fibring_and_principal_T_upper_n_bundles}, we can find a finite
  covering $q \colon C \to B$ such that $C$ is connected, a principal $S^1$ bundle
  $\overline{\pr} \colon N \to C$, and an epimorphism
  $\overline{\phi} \colon \pi_1(C) \to \IZ$ satisfying
  $\overline{\phi} \circ \pi_1(\overline{\pr}) = \phi$.  Let
  $f_C \colon \overline{C} \to C$ be the infinite cyclic covering of $C$ associated to
  $\overline{\phi}$.  Consider the pullback
  \[
    \xymatrix{\overline{N} \ar[r]^{\overline{\overline{\pr}}} \ar[d]_{f_N} & \overline{C}
      \ar[d]^{f}
      \\
      N \ar[r]_{\overline{\pr}} & C.  }
  \]
  Then $f_N \colon \overline{N} \to N$ is the infinite cyclic covering associated to
  $\phi \colon \pi_1(N) \to \IZ$ and
  $\overline{\overline{\pr}} \colon \overline{N} \to \overline{C}$ is principal
  $S^1$-bundle, whose homological Gysin sequence
  \[
    \cdots \to H_n(\overline{N}) \xrightarrow{\overline{\overline{\pr}}} H_n(\overline{C})
    \to H_{n-2}(\overline{C}) \to H_{n-1}(\overline{N})
    \xrightarrow{\overline{\overline{\pr}}} H_{n-1}(\overline{C}) \to
    H_{n-3}(\overline{C}) \to \cdots
  \]
  is an exact sequence of $\IZ[\IZ]$-modules. It induces an exact sequence of
  $S^{-1}\IZ[\IZ]$-modules
  \begin{multline*}
    \cdots \to S^{-1}\IZ[\IZ] \otimes_{\IZ[\IZ]} H_n(\overline{N}) \to S^{-1}\IZ[\IZ]
    \otimes_{\IZ[\IZ]} H_n(\overline{C}) \to S^{-1}\IZ[\IZ] \otimes_{\IZ[\IZ]}
    H_{n-2}(\overline{C})
    \\
    \to S^{-1}\IZ[\IZ] \otimes_{\IZ[\IZ]} H_{n-1}(\overline{N}) \to S^{-1}\IZ[\IZ]
    \otimes_{\IZ[\IZ]} H_{n-1}(\overline{C}) \to S^{-1}\IZ[\IZ] \otimes_{\IZ[\IZ]}
    H_{n-3}(\overline{C}) \to \cdots
  \end{multline*}
  Suppose that $b_n^{(2)}(\overline{N};\caln(\IZ))$ is trivial for every
  $n \in \IZ_{\ge 0}$.  Then $S^{-1}\IZ[\IZ] \otimes_{\IZ[\IZ]} H_n(\overline{N})$
  vanishes for every $n \in \IZ_{\ge 0}$. Hence we get for every $n \in \IZ_{\ge 0}$ an
  isomorphism of $S^{-1}\IZ[\IZ]$-modules
  \[
    S^{-1}\IZ[\IZ] \otimes_{\IZ[\IZ]} H_n(\overline{C}) \xrightarrow{\cong} S^{-1}\IZ[\IZ]
    \otimes_{\IZ[\IZ]} H_{n-2}(\overline{C})
  \]
  This implies $S^{-1}\IZ[\IZ] \otimes_{\IZ[\IZ]} H_n(\overline{C}) = 0$ for
  $n \in \IZ_{\ge 0}$.  Hence $\chi(C) = 0$ and therefore $\chi(B) = 0$. Since by
  assumption we have $\chi(B) \not= 0$, we get a contradiction. Hence
  $b_n^{(2)}(\overline{N};\caln(\IZ))$ is non-trivial for at least one
  $n \in \IZ_{\ge 0}$;
  \\[1mm]~\ref{the:counterexamples_to_fibring_aspherical_non_universal:non_virtually_fibring}
  This follows from
  assertion~\ref{the:counterexamples_to_fibring_aspherical_non_universal:non_vanishing}
  and~\cite[Theorem~6.63 on page~270]{Lueck(2002)}.
\end{proof}


  \typeout{-------------------------- Section 6: Products -------------------------}

  \section{Products}\label{sec:Products}

  The next Theorem~\ref{the:products_and_fibring} shows that it is in general not
  possible to construct counterexamples to fibring in higher dimensions from lower
  dimensions just by taking products.

   \begin{theorem}\label{the:products_and_fibring}
     For $i = 1,2$ consider any $d_i \in \IZ_{\ge 6}$ and any finitely presented group
     $G_i$ such that there exists a group extension
     $1 \to K_i \to G_i \xrightarrow{\phi_i} \IZ \to 1$ with finitely presented $K_i$.

     Then there exist connected closed smooth manifolds $M_1$ and $M_2$ satisfying:

     \begin{enumerate}
     \item\label{the:products_and_fibring:M_and_N:dimension_and_fundamental_group} We
       have $\dim(M_i) = d_i$ and $\pi_1(M_i) \cong G_i$ for $i = 1,2$;

     \item\label{the:products_and_fibring:not_F_2} Both $M_1$ and $M_2$ do not virtually
       $\F_2$-fibre;

     \item\label{the:products_and_fibring:finitely_fibres} The product $M_1 \times M_2$
       $\FP$-fibres;

     \item\label{the:products_and_fibring:M_times_N:fibres_over_S_upper_1} If $G_1$ and
       $G_2$ are torsionfree Farrell--Jones groups, then $M_1 \times M_2$ fibres over
       $S^1$.
     \end{enumerate}
   \end{theorem}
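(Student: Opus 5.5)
Following Section~\ref{sec:General_but_not_aspherical_counterexamples}, I would build each $M_i$ as the boundary of a regular neighbourhood of an embedding of a finite $CW$-complex $X_i$ with $\pi_1(X_i)=G_i$ into a Euclidean space. The defect in $X_i$ is chosen so that it is killed in the product by the Künneth formula but not in each factor.

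\emph{Construction.} Take a finite presentation $2$-complex $B_i$ for $G_i$. Choose $y_i\in G_i$ with $\phi_i(y_i)=1$, and put $g=y_i$. Then $3g+2e$ becomes a unit over every field in the Novikov rings for $\phi_i$ (Lemma~\ref{lem:properties_of_C_ast}~\ref{lem:properties_of_C_ast:F}). However, it is not a unit over $\IZ$ for any finite-index restriction (Lemma~\ref{lem:properties_of_C_ast}~\ref{lem:properties_of_C_ast:Z}). The two factors should be made different, for instance one using primes $2,3$ and the other using $5,7$, i.e.\ $3g+2e$ versus $7g+5e$; the analogue of Lemma~\ref{lem:properties_of_C_ast} holds verbatim for coprime coefficients. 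As in the proof of Theorem~\ref{the:counterexamples_to_fibring_non_aspherical}, attach a $2$-sphere and a $3$-cell to get $X_i$ with $C_*(\widetilde X_i)\cong C_*(\widetilde B_i)\oplus\Sigma^2 C^{(i)}_*$. Let $M_i=\partial Z_i$ for a regular neighbourhood $Z_i$ of $X_i\subset\IR^{d_i+1}$; if necessary I would enlarge $X_i$ by a trivial wedge so that $d_i-\dim X_i\ge 3$. Then $\pi_1(M_i)=G_i$ and $\widetilde M_i\to\widetilde X_i$ is $3$-connected. Hence, exactly as in that proof, the second Novikov homology of $M_i$ is nonzero for every finite cover and character, so $M_i$ does not virtually $\F_\infty$-fibre. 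To upgrade this to ``not $\F_2$'', I would observe that $\F_2$-fibring forces the Novikov homology to vanish in degrees $\le 2$ by the homological BNSR characterisation. The nonvanishing $H_2$ then already obstructs this, because the defect sits in degree $2$.

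\emph{Product.} Use $\phi=\phi_1\circ\pr_1+\phi_2\circ\pr_2$ on $G_1\times G_2$. Its kernel $K$ contains $K_1\times K_2$ and is an extension of $\IZ$ by it, so $K$ is finitely presented. The chain complex of $\widetilde{M_1\times M_2}$ is $C_*(\widetilde M_1)\otimes C_*(\widetilde M_2)$, and I would verify the Novikov criterion of Lemma~\ref{lem:Novikov_rings_and_chain_complex} for $\phi$ over $\IZ$. Since $K_i$ is finitely presented and $\widetilde M_i$ is built from finitely many cells, the part coming from $B_i$ has finitely generated homology over $\IZ[K_i]$. The key computation is that in the Novikov ring of $G_1\times G_2$ both $3y_1+2$ and $7y_2+5$ become invertible after localising away from the complementary primes. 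Mixing via a Mayer--Vietoris-type cover of $\operatorname{Spec}\IZ$, the tensor product $\Sigma^2C^{(1)}\otimes\Sigma^2C^{(2)}$ and the cross terms become acyclic. More carefully, I would argue that the Novikov homology of the product is an $\IZ$-module killed after inverting $\{2,3\}$ and also after inverting $\{5,7\}$, hence zero. The factors $C_*(\widetilde M_i)$ coming from $B_i$ are handled by finite presentation of $K_i$ and Remark~\ref{rem:Use_of_Farrell_Jones_and_Novikov}-type arguments. Lemma~\ref{lem:Novikov_rings_and_chain_complex} plus Lemma~\ref{lem:criteria_for_finitely_dominated} then give that the infinite cyclic cover is finitely dominated, i.e.\ $M_1\times M_2$ $\FP$-fibres.

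\emph{Last item.} If $G_i$ are torsionfree Farrell--Jones groups, so is $G_1\times G_2$ and its subgroup $K$. Hence $\Wh$ and $\widetilde K_0(\IZ K)$ vanish, and Theorem~\ref{the:Farrells_fibring_theorem_improved} upgrades $\FP$-fibring to a genuine fibre bundle; note $\dim\ge12\ge6$.

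\emph{Main obstacle.} I expect the main obstacle to be the product Novikov computation, in particular handling the $B_i$-parts. We know nothing about the higher homology of $\widetilde B_i$ over $K_i$ beyond finite presentability, and the torsion interplay in the Künneth formula has to be controlled. If this fails, I would instead replace $B_i$ by the mapping torus of a finite $K_i$-presentation complex, which has $\F_2$-type kernel by construction.
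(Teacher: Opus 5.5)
Your route differs from the paper's, which wedges the mapping torus $Y_i$ of the finite $2$-skeleton of $BK_i$ with a Moore space $Z_i$ ($H_2(Z_i)=\IZ/l_i$, $l_1\neq l_2$). Your argument for (2) is acceptable: finite $2$-type of the cover forces the Novikov homology to vanish in degrees $\le 2$, and Lemma~\ref{lem:properties_of_C_ast} gives non-zero $H_2$. The paper gets (2) more cheaply, from $H_2(\overline{X_i})$ not being finitely generated.

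\textbf{The gap is in (3), the heart of the theorem.} You compute as if $C_*(\widetilde{M_i})$ were $C_*(\widetilde{B_i})\oplus\Sigma^2C^{(i)}_*$. But $\widetilde{M_i}\to\widetilde{X_i}$ is only $(d_i-3)$-connected. The upper half of $C_*(\widetilde{M_i})$ is Poincar\'e dual to $X_i$, via $0\to C_*(\widetilde{M_i})\to C_*(\widetilde{N_i})\to C_*(\widetilde{N_i},\widetilde{M_i})\to 0$ with $C_*(\widetilde{N_i},\widetilde{M_i})\simeq C^{d_i+1-*}(\widetilde{N_i})$.
\begin{itemize}
\item You must show that the restriction to $K_i$ of the \emph{dual} of the good part is homotopically $\FP$. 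This is not formal, because $\IZ[G_i]$-duals do not commute with restriction to $K_i$. The paper proves it in Lemma~\ref{lem:FP_and_duals}, using Novikov rings and the anti-homomorphism exchanging $t$ and $t^{-1}$.
\item You need a way to pass from the pieces to the product. The Novikov ring of $G_1\times G_2$ for $\phi$ does not split into Novikov rings of the factors, so you cannot check the criterion factorwise on $C_*(\widetilde{M_1})\otimes_{\IZ}C_*(\widetilde{M_2})$. The paper instead uses homotopy finiteness of restrictions to $K$ and two-out-of-three arguments (Lemmas~\ref{lem:types_and_exact_sequences}, \ref{lem:six_out_of_four} and~\ref{lem:products_of_special_complexes}).
\item Your claim that the $B_i$-part has finitely generated homology over $\IZ[K_i]$ is false for a general presentation complex. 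For $G_i=\IZ$ and $B_i=S^1\vee S^2$ one gets $H_2(\overline{B_i})\cong\IZ[t^{\pm1}]$, which is not finitely generated over $\IZ[K_i]=\IZ$. Your fallback $Y_i$ is genuinely needed, and it is exactly what the paper uses.
\end{itemize}

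\textbf{Your justification for the defect--defect term is wrong as stated.} Novikov completion does not commute with inverting integers. The element $a=3y_1+2$ is not a unit in $\IZ[1/6]\otimes_{\IZ}\IZ[K]_{\IZ[\gamma]}((t))$, since its inverse $\tfrac12\sum_{n}(-\tfrac32 y_1)^n$ has unbounded denominators. So ``killed after inverting $\{2,3\}$'' is not established this way. The conclusion is still true, for a simpler reason. Put $b=7y_2+5$ and work in the Novikov rings of the abelian subgroup $\langle y_1,y_2\rangle\cong\IZ^2$. There
\[
3a-b=1+9y_1-7y_2 \quad\text{and}\quad -2y_1^{-1}a+y_2^{-1}b=1-4y_1^{-1}+5y_2^{-1}
\]
are units for the two directions, by Lemma~\ref{lem:units_in_Novikov_rings}.
\begin{itemize}
\item Hence the Koszul complex on the commuting pair $(a,b)$ is contractible there. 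It stays contractible after base change to the Novikov rings of $G_1\times G_2$.
\item The same trick handles the dual elements $\overline{a},\overline{b}$ arising from the duality above. Their extreme coefficients lie in $\{\pm2,\pm3\}$ and $\{\pm5,\pm7\}$ respectively.
\item The cross terms (good part of one factor tensor defect of the other) are homotopically $\FF$ over $K$ by the double coset argument, and need no primes.
\end{itemize}

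With these repairs and the duality lemma your strategy can be completed. As written, though, the product step lacks its main ingredients. The paper's Moore-space defects make the defect--defect term contractible already over $\IZ$, so there Novikov rings enter only through the duality lemma.
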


   Its proof needs some preparation.
   
   \subsection{Preparation for the proof of Theorem~\ref{the:products_and_fibring}}

   In the sequel we consider the group epimorphism $\phi$ given by the composite
   \begin{equation}
     \phi \colon G_1 \times G_2  = \pi_1(X) \times \pi_1(Y) =\pi_1(X \times Y)
     \xrightarrow{\Phi_1 \times \Phi_2}  \IZ \times \IZ
     \xrightarrow{\begin{pmatrix} 1 & 1  \end{pmatrix}} \IZ
     \label{choice:of_Phi}
   \end{equation}
   and denote by $K$ its kernel. Moreover we fix two distinct primes $l_1$ and $l_2$.

   We call a $R$-chain complex \emph{homotopically of type $\FF$ or $\FP$} respectively if
   it is $R$-chain homotopy equivalent to a finite free or finite projective respectively
   $R$-chain complex.

     \begin{lemma}\label{lem:types_and_exact_sequences}
       Let $0 \to U_* \to V_* \to W_*\to 0$ be an exact sequence of $R$-chain complexes.
       If any two of the $R$-chain complexes $U_*$, $V_*$, and $W_*$ 
       are of type $\FF$ or of type $\FP$ respectively,
       then all three  of them are of type $\FF$ or of type $\FP$ respectively.
     \end{lemma}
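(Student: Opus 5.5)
The natural approach is to reduce everything to the mapping cone. Up to chain isomorphism, a short exact sequence $0 \to U_* \to V_* \to W_* \to 0$ is a cofibration sequence only when it splits degreewise. So I first need to interpret the hypotheses so that the lemma is usable. Either the sequence is degreewise split, for instance because $W_*$ consists of projective modules, as in the intended applications; or, in general, the canonical map $\cone(U_* \to V_*) \to W_*$ is a homology equivalence, and it becomes a chain homotopy equivalence once all complexes involved are projective. I will assume the complexes are projective, which is the setting of the paper, and in particular that $W_*$ is degreewise projective, so the sequence splits in each degree. Then $W_* \simeq \cone(i_*)$, where $i_* \colon U_* \to V_*$ is the inclusion. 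Rotating the cofibre sequence gives
\[
  V_* \simeq \Sigma^{-1}\cone(W_* \to \Sigma U_*), \qquad U_* \simeq \Sigma^{-1}\cone(V_* \to W_*),
\]
where $\Sigma^{-1}$ is harmless because all three complexes are positive, up to a standard truncation argument.

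The key step is the following claim: if $f_* \colon A_* \to B_*$ is a chain map between complexes that are homotopically of type $\FF$ (respectively $\FP$), then $\cone(f_*)$ is homotopically of type $\FF$ (respectively $\FP$). To prove it, choose chain homotopy equivalences $a_* \colon A'_* \to A_*$ and $b_* \colon B'_* \to B_*$ with $A'_*, B'_*$ finite free (respectively finite projective). Let $b^{-1}_*$ be a homotopy inverse of $b_*$ and set $f'_* = b^{-1}_* \circ f_* \circ a_*$. Then the square formed by $f'_*$, $f_*$, $a_*$ and $b_*$ commutes up to homotopy. By the homotopy invariance of mapping cones under homotopy-commutative squares of homotopy equivalences (the same principle as in Lemma~\ref{lem:elementary_properties_of_mapping_torus}~\ref{lem:elementary_properties_of_mapping_torus:homotopy_invariance}, for instance \cite[Lemma~14.45]{Lueck-Macko(2024)}), we get $\cone(f'_*) \simeq \cone(f_*)$. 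Finally, $\cone(f'_*)_n = A'_{n-1} \oplus B'_n$ is finite free (respectively finite projective).

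Applying the claim to the three rotations handles the three cases: $U_*,V_* \Rightarrow W_*$, $V_*,W_* \Rightarrow U_*$, and $U_*,W_* \Rightarrow V_*$.

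The main obstacle is the desuspension in the cases where $U_*$ or $V_*$ is the unknown. The complex $\Sigma^{-1}\cone(V_* \to W_*)$ may have a nonzero term in degree $-1$, namely $V_{-1} \oplus W_0 = W_0$. It is only homology-equivalent to a positive complex, and an $\FP$ or $\FF$ complex concentrated in degrees $\ge -1$ needs to be truncated. I would handle this by the standard folding trick. In the $\FP$ case, replace the bottom two terms by a projective kernel: since $H_{-1} = 0$, the bottom differential is a split surjection onto a projective module, so its kernel is finitely generated projective and we may truncate. In the $\FF$ case, that kernel is only stably free, which we fix by adding an elementary complex $R^k \xrightarrow{\id} R^k$ in degrees $0,1$. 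This keeps the type $\FF$, which is where the distinction between $\FF$ and $\FP$ must be tracked carefully.
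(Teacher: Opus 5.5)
Your argument is correct, but it takes a genuinely different route from the paper. The paper gives no argument of its own; it cites \cite[Theorem~11.2 and Lemma~11.6]{Lueck(1989)}, i.e.\ the algebraic finiteness-obstruction theory for projective chain complexes. In effect, two-out-of-three for finitely dominated complexes gives the $\FP$ case. The $\FF$ case then follows from additivity of the Wall obstruction in $\widetilde{K}_0(R)$ along short exact sequences, together with the fact that this obstruction vanishes exactly in type $\FF$. You instead work directly with triangles in the homotopy category, invariance of mapping cones, and an explicit folding of the bottom two terms. Your correction of the stably free kernel by an elementary complex $R^k \xrightarrow{\id} R^k$ is the hands-on version of ``the obstruction of $U_*$ is the difference of those of $V_*$ and $W_*$, hence zero''. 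Your route is elementary and self-contained; the cited one is shorter and also yields the additivity formula.

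You are right to impose projectivity, or at least degreewise splitting, because without it the statement fails. Take $0 \to \IZ \xrightarrow{2} \IZ \to \IZ/2 \to 0$ concentrated in degree $0$: here $U_*$ and $V_*$ are finite free. But $W_*$ is not chain homotopy equivalent to any complex $P_*$ of free abelian groups, since any homotopy inverse $W_* \to P_*$ would be zero although $H_0(W_*) = \IZ/2 \neq 0$. This is harmless for the paper, where all complexes are free.

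Finally, the degree $-1$ problem only occurs for $U_*$. The complex $\Sigma^{-1}\cone(W'_* \to \Sigma U'_*)$ has $W'_n \oplus U'_n$ in degree $n$, so it is already positive. The direction $V_*, W_* \Rightarrow U_*$ is the only one used in Lemma~\ref{lem:six_out_of_four}, so your folding step is exactly the part the paper needs.
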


     \begin{proof}
       This follows from~\cite[Theorem~11.2 on page~212 and Lemma~11.6 on
       page~216]{Lueck(1989)}.
     \end{proof}

     \begin{lemma}\label{lem:six_out_of_four}
       Let $0 \to U[i]_* \to V[i]_* \to W[i]_* \to 0$ be a short exact sequence of
       $\IZ[G_i]$-chain complexes for $i = 1,2$. Suppose that the restriction of the four
       $\IZ[G_1 \times G_2]$-chain complexes $V[1]_* \otimes_{\IZ} V[2]_*$,
       $V[1]_* \otimes_{\IZ} W[2]_*$, $W[1]_* \otimes_{\IZ} V[2]_*$, and
       $W[1]_* \otimes_{\IZ} W[2]_*$ to $K$ is of type $\FF$ or $\FP$ respectively.
       Then the $\IZ[K]$-chain complex
       $\res_{G_1 \times G_2}^K U[1]_* \otimes_{\IZ} U[2]_*$ is of type type $\FF$ or
       $\FP$ respectively.
     \end{lemma}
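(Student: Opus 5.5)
The plan is to obtain the statement by repeated use of Lemma~\ref{lem:types_and_exact_sequences}, applied to short exact sequences of $\IZ[G_1\times G_2]$-chain complexes restricted to $K$. Restriction to $K$ is exact and preserves short exact sequences, so every two-out-of-three deduction may be carried out after restricting. Throughout, tensor products are over $\IZ$ with the diagonal $G_1\times G_2$-action. We assume, as is implicit in the setting, that all complexes involved are projective as $\IZ$-modules, for instance because they are complexes of projective $\IZ[G_i]$-modules. Then tensoring a short exact sequence of $\IZ[G_1]$-chain complexes with a $\IZ[G_2]$-chain complex gives a short exact sequence of $\IZ[G_1\times G_2]$-chain complexes. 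The same holds with the roles of $G_1$ and $G_2$ exchanged.

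First, tensor the sequence $0\to U[1]_*\to V[1]_*\to W[1]_*\to 0$ with $V[2]_*$ to obtain
\[
0\to U[1]_*\otimes_{\IZ} V[2]_*\to V[1]_*\otimes_{\IZ} V[2]_*\to W[1]_*\otimes_{\IZ} V[2]_*\to 0 .
\]
By hypothesis the middle and right terms are of type $\FF$ (resp.\ $\FP$) after restriction to $K$. Lemma~\ref{lem:types_and_exact_sequences} therefore shows that $\res^K U[1]_*\otimes_{\IZ} V[2]_*$ is of type $\FF$ (resp.\ $\FP$).

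Second, tensor the same first sequence with $W[2]_*$. The hypotheses on $V[1]_*\otimes W[2]_*$ and $W[1]_*\otimes W[2]_*$, together with Lemma~\ref{lem:types_and_exact_sequences}, show that $\res^K U[1]_*\otimes_{\IZ} W[2]_*$ is of the required type.

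Finally, tensor $U[1]_*$ with the sequence $0\to U[2]_*\to V[2]_*\to W[2]_*\to 0$ to obtain
\[
0\to U[1]_*\otimes_{\IZ} U[2]_*\to U[1]_*\otimes_{\IZ} V[2]_*\to U[1]_*\otimes_{\IZ} W[2]_*\to 0 .
\]
By the two previous steps the middle and right terms have the required type after restriction. One more application of Lemma~\ref{lem:types_and_exact_sequences} gives the claim for $\res_{G_1\times G_2}^K U[1]_*\otimes_{\IZ} U[2]_*$.

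The only point that needs care is the exactness of the tensored sequences. This requires the complexes being tensored to be $\IZ$-flat, or the sequences to be degreewise split over $\IZ$. Both hold when the complexes consist of projective $\IZ[G_i]$-modules, since these are free or projective over $\IZ$. I would state this assumption explicitly, since in the intended application the complexes are cellular chain complexes or mapping cones of such, where it is automatic. The remaining steps are purely formal.
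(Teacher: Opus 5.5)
Your proposal is correct and is essentially the paper's proof. You use the same three short exact sequences: the first sequence tensored with $V[2]_*$ and with $W[2]_*$, then $U[1]_*$ tensored with the second sequence. You restrict them to $K$ and apply Lemma~\ref{lem:types_and_exact_sequences} three times, and your $\IZ$-flatness (degreewise splitting) remark makes explicit a point the paper leaves implicit when it asserts these sequences are exact.
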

     \begin{proof}
       We obtain short exact sequence of $\IZ[G_1 \times G_2]$-chain complexes.
       \begin{eqnarray*}
         &
           0 \to U[1]_* \otimes_{\IZ} U[2]_* \to U[1]_* \otimes_{\IZ} V[2]_* \to U[1]_* \otimes_{\IZ} W[2]_* \to 0;
         &
         \\
         &
           0 \to U[1]_* \otimes_{\IZ} V[2]_* \to V[1]_* \otimes_{\IZ} V[2]_* \to W[1]_* \otimes_{\IZ} V[2]_* \to 0;
         &
         \\
         &
           0 \to U[1]_* \otimes_{\IZ} W[2]_* \to V[1]_* \otimes_{\IZ} W[2]_* \to W[1]_* \otimes_{\IZ} W[2]_* \to 0.
         &
       \end{eqnarray*}
       The stay exact after applying $\res_{G_1 \times G_2}^K$. Now the claim follows from an
       iterated application of Lemma~\ref{lem:types_and_exact_sequences}.
     \end{proof}
     
     For $i = 1,2$ we call a $\IZ[G_i]$-chain complex $B[i]_*$ \emph{special} if it is
     $\IZ[G_i]$-chain homotopy equivalent to
     $D[i]_* \oplus (\IZ[G_i] \otimes_{\IZ} E[i]_*)$ for a finite free $\IZ[G_i]$-chain
     complex $D[i]_*$ for which $\res_{G_i}^{K_i}$ is homotopically of type $\FF$ and a
     finite free $\IZ$-chain complex $E_*$ such that $H_n(E_*)$ is $l_i$-torsion.

     \begin{lemma}\label{lem:products_of_special_complexes}
       Let $B[i]_*$ be a special $\IZ[G_i]$-chain complex for $i = 1,2$.
        Then the $\IZ[K]$-chain complex
       $\res_{G_1 \times G_2}^K B[1]_* \otimes_{\IZ} B[2]_*$, which is obtained from the
       $\IZ[G_1 \times G_2] = \IZ[G_1] \otimes_{\IZ} \IZ[G_2]$-chain complex
       $B[1]_* \otimes_{\IZ} B[2]_*$ by restriction, is homotopically of type $\FF$.
     \end{lemma}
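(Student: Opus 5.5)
By homotopy invariance we may replace each $B[i]_*$ by $D[i]_* \oplus (\IZ[G_i]\otimes_{\IZ} E[i]_*)$. The tensor product then splits as a direct sum of four $\IZ[G_1\times G_2]$-chain complexes, and it suffices to show that the restriction of each summand to $K$ is homotopically of type $\FF$. A finite direct sum of complexes of type $\FF$ is again of type $\FF$, for instance by Lemma~\ref{lem:types_and_exact_sequences} applied to split exact sequences.

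\emph{The term $D[1]_*\otimes_{\IZ} D[2]_*$.} The subgroup $K_1\times K_2$ lies in $K$ with $K/(K_1\times K_2)\cong\IZ$, generated by an element $(y_1,y_2^{-1})$ where $\phi_i(y_i)=1$. By hypothesis $\res^{K_1\times K_2}(D[1]_*\otimes D[2]_*)$ is homotopy equivalent to a finite free $\IZ[K_1\times K_2]$-complex $P_*$, namely a tensor product of the finite models. The $K$-complex is then a mapping torus over $K_1\times K_2$ in the sense of Definition~\ref{def:chain_complex_version_of_a_mapping_torus} and Lemma~\ref{lem:elementary_properties_of_mapping_torus}~\ref{lem:elementary_properties_of_mapping_torus:starting_with_ZG-chain_complex}, applied with $G=K$ and kernel $K_1\times K_2$. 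By Lemma~\ref{lem:elementary_properties_of_mapping_torus}~\ref{lem:elementary_properties_of_mapping_torus:homotopy_invariance} it is homotopy equivalent to $T(w_*)$ for a self-map $w_*$ of $P_*$. This is finite free over $\IZ[K]$, since it is the cone of a map between induced finite free complexes.

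\emph{The mixed terms $D[1]_*\otimes(\IZ[G_2]\otimes E[2]_*)$ and the symmetric one.} Here $\IZ[G_2]\otimes_{\IZ} E[2]_*$ is the induced complex from the trivial group, so the summand is $\IZ[G_1\times G_2]\otimes_{\IZ[G_1]} (D[1]_*\otimes E[2]_*)$. Since $G_1\times\{1\}\cdot K = G_1\times G_2$, because $\phi$ restricted to $G_1$ is already onto, the double coset formula gives the restriction to $K$ as $\IZ[K]\otimes_{\IZ[K\cap G_1]}\res(D[1]_*\otimes E[2]_*)$ with $K\cap G_1=K_1$. Because $D[1]_*$ restricted to $K_1$ is of type $\FF$ and $E[2]_*$ is finite free over $\IZ$, the tensor product is of type $\FF$ over $K_1$, and induction preserves this.

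\emph{The term $(\IZ[G_1]\otimes E[1]_*)\otimes(\IZ[G_2]\otimes E[2]_*)$.} This is $\IZ[G_1\times G_2]\otimes_{\IZ}(E[1]_*\otimes E[2]_*)$, and its restriction to $K$ is a direct sum, indexed by $K\backslash(G_1\times G_2)\cong\IZ$, of copies of $\IZ[K]\otimes_{\IZ}(E[1]_*\otimes E[2]_*)$. Infinitely many copies occur, so one must use that $E[1]_*\otimes_{\IZ}E[2]_*$ is contractible. By the Künneth theorem its homology consists of terms $H_p(E[1])\otimes H_q(E[2])$ and $\operatorname{Tor}(H_p(E[1]),H_q(E[2]))$. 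These vanish because $l_1$-torsion and $l_2$-torsion groups with $l_1\ne l_2$ have trivial tensor and Tor. A finite free $\IZ$-complex with zero homology is contractible, so this summand is contractible and hence of type $\FF$ (homotopy equivalent to $0$).

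The main obstacle is the first term. One has to check carefully that the restriction to $K$ of $D[1]\otimes D[2]$ really is the mapping torus of a chain self-map of a finite free $\IZ[K_1\times K_2]$-complex, and that the relevant squares commute up to homotopy so that Lemma~\ref{lem:elementary_properties_of_mapping_torus} applies. This requires using $(y_1,y_2^{-1})$ acting diagonally on the two finite models.
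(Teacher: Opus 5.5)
Your proposal is correct and follows essentially the same route as the paper. Both split into the four summands. Both apply the mapping torus Lemma~\ref{lem:elementary_properties_of_mapping_torus} to $K_1\times K_2\subseteq K$ for $D[1]_*\otimes_{\IZ} D[2]_*$. Both use the double coset formula for the mixed terms, via the single double coset $(G_1\times\{1\})K=G_1\times G_2$. Both get contractibility of $E[1]_*\otimes_{\IZ}E[2]_*$ from $l_1\neq l_2$.

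The ``obstacle'' you flag is routine: take $w_*=\gamma^*u_*\circ l(y)_*\circ u'_*$ for a homotopy inverse $u'_*$ of $u_*$. Your remark that $T(w_*)$ is finite free, being the cone of a map between induced finite free complexes, makes explicit a step the paper leaves implicit.
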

     \begin{proof}
       Let $D[i]_*$ be a free $\IZ[G_i]$-chain complex such that $\res_{G_i}^{K_i} D[i]_*$
       is homotopically of type $\FF$. Let $E[i]_*$ be a finite free $\IZ$-chain complex
       whose homology is $l_i$-torsion.  Then we get the following four
       $\IZ[G_1 \times G_2]$-chain complexes
       \begin{eqnarray*}
         & D[1]_* \otimes_{\IZ} D[2]_* ;&
         \\
         & D[1]_* \otimes_{\IZ} (\IZ[G_2]_* \otimes_{\IZ} E[2]_*);&
         \\
         & (\IZ[G_1]_* \otimes_{\IZ} E[1]_*) \otimes_{\IZ} D[2]_*; &
         \\
         & (\IZ[G_1]_* \otimes_{\IZ} E[1]_*) \otimes_{\IZ} (\IZ[G_2]_* \otimes_{\IZ} E[2]_*). &       
       \end{eqnarray*}
       We have to show that for each of them the restriction to $K$ is homotopically of
       type $\FF$.

       We conclude from Lemma~\ref{lem:elementary_properties_of_mapping_torus}~%
\ref{lem:elementary_properties_of_mapping_torus:starting_with_ZG-chain_complex}
       that $\res_{G_1 \times G_2}^K D[1]_* \otimes_{\IZ} D[2]_*$ is $\IZ[K]$-chain
       homotopy equivalent to $T(f_*)$ for some $\IZ[K_1 \times K_2]$-chain homotopy
       equivalence
       \[
         f_* \colon \res_{G_1}^{K_1}D[1]_* \otimes_{\IZ} \res_{G_2}^{K_1}D[2]_* \to
         \gamma^* \left(\res_{G_1}^{K_1}D[1]_* \otimes_{\IZ}
           \res_{G_2}^{K_1}D[2]_*\right).
       \]
       Since the $\IZ[K_i]$-chain complex $\res_{G_i}^{K_i}D[i]_*$ is homotopically of
       type $\FF$ by assumption, the $\IZ[K_1 \times K_2]$-chain complex
       $\res_{G_1}^{K_1}D[1]_* \otimes_{\IZ} \res_{G_2}^{K_1}D[2]_*$ is homotopically of
       type $\FF$. We conclude from
       Lemma~\ref{lem:elementary_properties_of_mapping_torus}~%
\ref{lem:elementary_properties_of_mapping_torus:homotopy_invariance} that the
       $\IZ[K]$-chain complex $T(f_*)$ and hence the $\IZ[K]$-chain complex
       $\res_{G_1 \times G_2}^K D[1]_* \otimes_{\IZ} D[2]_*$ are homotopically of type
       $\FF$.

       We have the $\IZ[G_1 \times G_2]$-chain isomorphism
       \[
         \bigl(\IZ[G_1 \times G_2] \otimes_{\IZ[G_1]} D[1]_*\bigr) \otimes_{\IZ} E[2]_*
         \xrightarrow{\cong} D[1]_* \otimes_{\IZ} (\IZ[G_2] \otimes E_*[2])
       \]
       sending $(g_1,g_2) \otimes x \otimes y$ to $g_1x \otimes g_2 \otimes y$.  Since
       $E[2]_*$ is a finite free $\IZ$-chain complex, it suffices to show that
       $\res_{G_1 \times G_2}^K \IZ[G_1 \times G_2] \otimes_{\IZ[G_1]} D[1]_*$ is homotopy
       finite. Since $(G_1 \times \{1\})\backslash (G_1 \times G_2)/K$ is trivial, we
       conclude from the Double Coset Formula applied to the subgroups $G_1 \times \{1\}$
       and $K$ of $G_1 \times G_2$ using the obvious identifications
       $K_1 = K_1 \times \{1\}$ and $G_1 = G_1 \times \{1\}$ that the $\IZ[K]$-chain map
       \[\IZ[K] \otimes_{\IZ[\{K_1 \times \{1\}]} \res_{G_1}^{K_1} D[1]_*
         \xrightarrow{\cong} \res_{G_1 \times G_2}^K \IZ[G_1 \times G_2]
         \otimes_{\IZ[G_1]} D[1]_*
       \]
       sending $k \otimes x$ to $k \otimes x$ is an isomorphism.  Hence it suffices to
       show that the $\IZ[K]$-chain complex
       $\IZ[K] \otimes_{\IZ[K_1]} \res_{G_1}^{K_1} D[1]_*$ is homotopically of type $\FF$.
       This follows from the assumption that $\res_{G_1}^{K_1} D[1]_*$ is homotopically of
       type $\FF$.

       The proof for
       $(\IZ[G_1]_* \otimes_{\IZ} E[1]_*) \otimes_{\IZ} D[2]_* \otimes_{\IZ}$ is
       analogous.
        
       Since the primes $l_1$ and $l_2$ are distinct, $H_n(E[1]_* \otimes_{\IZ} E[2]_*)$
       vanishes for all $n \in \IZ_{\ge 0}$.  Since $E[1]_*$ and $E[2]_*$ are free
       $\IZ$-chain complexes, the $\IZ$-chain complex $E[1]_* \otimes_{\IZ} E[2]_*$ is
       contractible.  We conclude that the $\IZ[G_1 \times G_2]$-chain complex
       $\IZ[G_1 \times G_2] \otimes_{\IZ} (E_*[1]) \otimes_{\IZ} E[2]_*$ is contractible.
       Hence the $\IZ[K]$-chain complex
       $\res_{G_1 \times G_2}^K (\IZ[G_1] \otimes E_*[1]) \otimes_{\IZ} (\IZ[G_2] \otimes
       E_*[2])$ is contractible and in particular homotopically of type $\FF$.
     \end{proof}

     Recall from~\cite[Subsection~5.6.1]{Lueck-Macko(2024)} that for a ring $R$ with
     involution $r \mapsto \overline{r}$ the \emph{dual $C^{d-*}$} of a $d$-dimensional
     finite projective $R$-chain complex $C_*$ is the $d$- dimensional finite projective
     $R$-chain complex whose $i$-th chain module is $\hom_R(C_{d-i},R)$, where the
     involution is used to define a left $R$-module structure on $\hom_R(C_{d-i},R)$ by
     the formula $(rf)(x) = f(x)\cdot \overline{r}$.  If $M$ is a connected compact
     manifold with first Stiefel Whitney class $w \colon \pi = \pi_1(M) \to \{\pm 1\}$,
     then the integral group ring $\IZ[\pi]$ is always equipped with \emph{the $w$-twisted
       involution} given by 
     \[
       \sum_{g \in \pi} \lambda_g \cdot g \mapsto \sum_{g \in \pi} \lambda_g \cdot w(g) \cdot g^{-1}.
      \]

\begin{lemma}\label{lem:FP_and_duals}
  Consider $i \in \{1,2\}$. Let $D[i]_*$ be a finite free $\IZ[G_i]$-chain complex of
  dimension $\le d_i + 1$ such $\res_{G_i}^{K_i} D[i]_*$ is homotopically of type
  $\FP$. Let $D[i]^{d_i+1 -*}$ be the dual $\IZ[G_i]$-chain complex of $D[i]_*$.
  Then the $\IZ[K_i]$-chain complex $\res_{G_i}^{K_i} D[i]^{d_i+1 -*}$ is homotopically of
  type $\FP$.
\end{lemma}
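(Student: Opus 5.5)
The plan is to avoid comparing duals over $\IZ[K_i]$ directly and instead go through the Novikov ring criterion of Lemma~\ref{lem:Novikov_rings_and_chain_complex}. A direct comparison fails because restriction does not commute with dualization: $\res_{G_i}^{K_i}\IZ[G_i]$ is an infinite direct sum of copies of $\IZ[K_i]$, while its $\IZ[K_i]$-dual is an infinite product. By Example~\ref{exa:Novikov_rings_and_group_rings}, choose $y_i\in G_i$ with $\phi_i(y_i)$ a generator. This identifies $\IZ[G_i]$ with $S_\Psi[t,t^{-1}]$, where $S=\IZ[K_i]$ and $\Psi=\IZ[\gamma]$. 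Since $D[i]_*$ and its dual $D[i]^{d_i+1-*}$ are finite free $\IZ[G_i]$-chain complexes, Lemma~\ref{lem:Novikov_rings_and_chain_complex} applies to both. Hence the hypothesis is equivalent to the vanishing of all homology of $S_\Psi((t))\otimes_{\IZ[G_i]}D[i]_*$ and of $S_\Psi((t^{-1}))\otimes_{\IZ[G_i]}D[i]_*$. The conclusion is the analogous vanishing for the dual complex.

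First I use that these Novikov complexes are bounded, finite free and acyclic over $S_\Psi((t^{\pm1}))$, hence contractible. Next I examine the $w$-twisted involution on $\IZ[G_i]$. It sends $k t^n$ to $w(\cdot)(y_i^n)^{-1}k^{-1}$, that is, it reverses the sign of $\phi_i$. It therefore extends to an anti-isomorphism $S_\Psi((t))\to S_\Psi((t^{-1}))$ and vice versa, because the support condition ``bounded below'' is converted into ``bounded above''. Call these $\overline{\,\cdot\,}$. Then for a finite free $\IZ[G_i]$-chain complex $C_*$ there is a natural isomorphism of $S_\Psi((t))$-chain complexes
\[
S_\Psi((t))\otimes_{\IZ[G_i]}C^{d_i+1-*}\;\cong\;\bigl(S_\Psi((t^{-1}))\otimes_{\IZ[G_i]}C_*\bigr)^{d_i+1-*}.
\]
Here the right-hand side is the dual over $S_\Psi((t^{-1}))$, turned into a left $S_\Psi((t))$-module via the extended involution. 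The symmetric statement holds with $t$ and $t^{-1}$ interchanged. It suffices to check this for $C=\IZ[G_i]$ in one degree, where both sides are $S_\Psi((t))$, and to check compatibility with the differentials, which are given by matrices transformed by the involution and transposition.

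Granting this, the dual of a contractible finite free complex is contractible, since one dualizes the contracting homotopy. So both Novikov complexes of $D[i]^{d_i+1-*}$ are contractible and in particular acyclic. Lemma~\ref{lem:Novikov_rings_and_chain_complex} then shows that $\res_{G_i}^{K_i}D[i]^{d_i+1-*}$ is $\IZ[K_i]$-chain homotopy equivalent to a finite projective complex, i.e.\ it is homotopically of type $\FP$.

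I expect the main obstacle to be the careful verification of the displayed isomorphism. One has to check that the $w$-twisted involution really extends continuously to the completions, exchanging $S_\Psi((t))$ and $S_\Psi((t^{-1}))$, including the twisting by $\Psi$ and the sign $w(y_i)$. One also has to check that base change commutes with dualization for finite free modules with the correct left and right module conventions. Once this bookkeeping is done, the rest is formal.
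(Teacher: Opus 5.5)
Your proposal is correct and follows essentially the same route as the paper. The paper also reduces to the Novikov criterion of Lemma~\ref{lem:Novikov_rings_and_chain_complex}. It then uses the anti-ring homomorphism $S_\Psi((t))\to S_\Psi((t^{-1}))$ induced by the involution (which cannot be an involution of a single Novikov ring, since $\overline{t}=\pm t^{-1}$) to identify the $t$-Novikov complex of $D[i]^{d_i+1-*}$ with the dual of the contractible $t^{-1}$-Novikov complex of $D[i]_*$, and symmetrically with $t$ and $t^{-1}$ interchanged.
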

\begin{proof}
  We conclude from Lemma~\ref{lem:Novikov_rings_and_chain_complex} in the setup of
  Example~\ref{exa:Novikov_rings_and_group_rings} for $\phi_i \colon G_i \to \IZ$ that
  both $H_n\bigl(\IZ[K]_{\Psi}[[t]] \otimes_{\IZ[K]_{\Psi}[t,t^{-1}]} D[i]_*\bigr)$ and
  $H_n\bigl(\IZ[K]_{\Psi}[[t^{-1}]] \otimes_{\IZ[K]_{\Psi}[t,t^{-1}]} D[i]_*\bigr)$ vanish
  for every $n \in \IZ_{\ge 0}$.  Lemma~\ref{lem:Novikov_rings_and_chain_complex} implies
  that it suffices to show that both
  $H_n\bigl(\IZ[K]_{\Psi}[[t]] \otimes_{\IZ[K]_{\Psi}[t,t^{-1}]}D[i]^{d_i+1 -*}\bigr)$ and
  $H_n\bigl(\IZ[K]_{\Psi}[[t^{-1}]] \otimes_{\IZ[K]_{\Psi}[t,t^{-1}]}D[i]^{d_i+1
    -*}\bigr)$ vanish for every $n \in \IZ_{\ge 0}$. Hence it suffices to show that the
  Novikov homology
  $H_n\bigl(\IZ[K]_{\Psi}[[t]] \otimes_{\IZ[K]_{\Psi}[t,t^{-1}]}D[i]^{d_i+1 -*}\bigr)$
  vanishes for all $n \in \IZ_{\ge 0}$ provided that the Novikov homology
  $H_n\bigl(\IZ[K]_{\Psi}[[t^{-1}]] \otimes_{\IZ[K]_{\Psi}[t,t^{-1}]} D[i]_*\bigr)$
  vanishes for every $n \in \IZ_{\ge 0}$.

  Now some care is necessary since the involution on $\IZ[K]_{\Psi}[t,t^{-1}]$ does not
  extend to an involution $\IZ[K]_{\Psi}[[t]]$ because of $\overline{t}= \pm t^{-1}$.
  However, there is an anti ring homomorphism
  $\ast \colon \IZ[K]_{\Psi}[[t]] \to \IZ[K]_{\Psi}[[t^{-1}]]$ satisfying
  $\ast(au) = \overline{u} \cdot \ast(a)$ for $a \in \IZ[K]_{\Psi}[[t]]$ and
  $u \in \IZ[K]_{\Psi}[[t^{-1}]]$.

  We obtain a natural isomorphism of abelian groups
  \begin{multline*}
    \Gamma_i \colon \IZ[K]_{\Psi}[[t]] \otimes_{\IZ[K]_{\Psi}[t,t^{-1}]}
    \hom_{\IZ[K]_{\Psi}[t,t^{-1}]}(D[i]_*, \IZ[K]_{\Psi}[t,t^{-1}])
    \\
    \xrightarrow{\cong} \hom_{\IZ[K]_{\Psi}[[t^{-1}]]} (\IZ[K]_{\Psi}[[t^{-1}]]
    \otimes_{\IZ[K]_{\Psi}[t,t^{-1}]} D[i]_*, \IZ[K]_{\Psi}[[t^{-1}]])
  \end{multline*}
  by sending $a \otimes f$ to the $\IZ[K]_{\Psi}[[t^{-1}]]$-map from
  $\IZ[K]_{\Psi}[[t^{-1}]] \otimes_{\IZ[K]_{\Psi}[t,t^{-1}]} D[i]_*$ to
  $\IZ[K]_{\Psi}[[t^{-1}]]$ which sends $b \otimes x$ to $b\cdot f(x)\cdot
  \ast(a)$. Obviously $\Gamma_i(a \otimes f)$ is a $\IZ[K]_{\Psi}[[t^{-1}]]$-map. One has
  to check that this is compatible with the tensor relation on the source. This follows
  from the following computation for $u \in \IZ[K]_{\Psi}[[t]]$:
  \[
    \Gamma_i(au \otimes f)(b \otimes x) = b\cdot f(x)\cdot \ast(au) = b\cdot f(x)\cdot
    \overline{u}\cdot \ast(a) = b\cdot (uf)(x)\cdot \ast(a) = \Gamma_i(a \otimes uf)(b
    \otimes x).
  \]
  One easily checks that $\Gamma_i$ is an isomorphism of abelian groups, since $D[i]_*$ is
  a finitely generated free $\IZ[K]_{\Psi}[t,t^{-1}]$-module. Since $\Gamma_i$ is natural,
  we get an isomorphism of $\IZ$-chain complexes
  \begin{multline*}
    \Gamma_* \colon \IZ[K]_{\Psi}[[t]] \otimes_{\IZ[K]_{\Psi}[t,t^{-1}]} D[i]^{d_i+1 -*}
    \\
    \xrightarrow{\cong} \hom_{\IZ[K]_{\Psi}[[t^{-1}]]} (\IZ[K]_{\Psi}[[t^{-1}]]
    \otimes_{\IZ[K]_{\Psi}[t,t^{-1}]} D[i]_{d_i+1-*}, \IZ[K]_{\Psi}[[t^{-1}]]).
  \end{multline*}
  Since the homology groups of the finitely generated free $\IZ[K]_{\Psi}[[t^{-1}]]$-chain
  complex $\IZ[K]_{\Psi}[[t^{-1}]]\otimes_{\IZ[K]_{\Psi}[t,t^{-1}]} D[i]_*$ all vanish, it
  is contractible.  Therefore the $\IZ$-chain complex
  \[
  \hom_{\IZ[K]_{\Psi}[[t^{-1}]]} (\IZ[K]_{\Psi}[[t^{-1}]]
  \otimes_{\IZ[K]_{\Psi}[t,t^{-1}]} D[i]_{d_i+1_*}, \IZ[K]_{\Psi}[[t^{-1}]])
  \]
   is  contractible.  Hence all the homology groups of
  $\IZ[K]_{\Psi}[[t]] \otimes_{\IZ[K]_{\Psi}[t,t^{-1}]} D[i]^{d_i+1 -*}$ vanish. This
  finishes the proof of Lemma~\ref{lem:FP_and_duals}.
\end{proof}

    \begin{remark}\label{rem:Novikov_homology_appearing_in_proof}
      Note that in the proof of Lemma~\ref{lem:FP_and_duals} we had to use the Novikov
      homology since we do not know how to relate $\res_{G_i}^{K_i} D[i]^{d_i+1 -*}$
      directly to $\res_{G_i}^{K_i} D[i]_*$.
    \end{remark}

	\subsection{Proof of Theorem~\ref{the:products_and_fibring}}
    Now we are ready to give the proof of Theorem~\ref{the:products_and_fibring}.
    \begin{proof}[Proof of of Theorem~\ref{the:products_and_fibring}]
      We begin with constructing the desired manifolds $M_1$ and $M_2$.  Recall the we
      have fixed two distinct primes $l_1$ and $l_2$. Consider $i \in \{1,2\}$.  Since
      $K_i$ is finitely presented by assumption, we can choose a model for $BK_i$ whose
      $2$-skeleton $(BK_i)_2$ is finite.  If $g_i \in G$ is an element which is sent under
      $\phi$ to $1$ and $f_i \colon BK_i \to BK_i$ is a cellular homotopy equivalence
      inducing on $\pi_1(BK_i) = K_i$ the homomorphism sending $k$ to $g_ikg_i^{-1}$, then
      the mapping torus $T_{f_i}$ is a model for $BG_i$. Let
      $f_i|_{(BK_i)_2} \colon (BK_i)_2 \to (BK_i)_2$ be the restriction of $f_i$ to the
      $2$-skeleton of $BG_i$. Put
      \[
        Y_i = T_{f_i|_{(BK_i)_2}}.
      \]
      Since the inclusion $(BK_i)_2 \to BK_i$ is $2$-connected, the inclusion
      $Y_i \to T_{f_i} = BG_i$ is $2$-connected and we get an identification
      $\pi_1(Y_i) = G_i$. Let $q_i \colon \overline{Y_i} \to Y_i$ be the infinite cyclic
      covering associated to $\phi_ i \colon \pi_1(Y_i) = G_i \to \IZ$. Then
      $\overline{Y_i}$ is homotopy equivalent to $(BK_i)_2$ and hence homotopy equivalent
      to a finite $2$-dimensional $CW$-complex.

      Consider $i \in \{1,2\}$. Let $Z_i$ be the mapping cone of a cellular map
      $S^2 \to S^2$ of degree $l_i$, and let $z_i$ be a basepoint for $Z_i$.  Obviously
      $Z_i$ is a simply connected finite $3$-dimensional $CW$-complex satisfying
      \begin{eqnarray}
        H_n(Z_i)
        & = &
              \begin{cases}
                \IZ & \text{if}\; n = 0;
                \\
                \IZ/l_i & \text{if}\; n = 2;
                \\
                \{0\} & \textup{otherwise}.
              \end{cases}
                        \label{H_n(Z_i)}
      \end{eqnarray}

      Now define a connected finite $3$-dimensional $CW$-complex $X_i$
      \[
        X_i = Y_i \vee Z_i.
      \]
      Note that the inclusion $Y_i \to X_i$ is $2$-connected.
     
      Since $d_i \ge 6$, we can choose an embedding $X_i \to \IR^{d_i+1}$ and consider a
      regular neighborhood $N_i$ of $X_i $ in $\IR^{d_i+1}$.  This is a compact
      $(d_i+1)$-dimensional manifold such that the inclusion $X_i \to N_i$ is a homotopy
      equivalence and the inclusion $\partial N_i \to N_i$ is $(d_i -3)$-connected.  Put
      \[
        M_i = \partial N_i.
      \]
      In particular we get identifications
      $G_i = \pi_1(Y_i) =\pi_1(X_i) = \pi_1(N_i) = \pi_1(M_i)$.
     
      Next we prove the desired assertions.
      \\[1mm]~\ref{the:products_and_fibring:M_and_N:dimension_and_fundamental_group} This
      follows directly from the construction.
      \\[1mm]~\ref{the:products_and_fibring:not_F_2} Since the inclusions $M_i \to N_i$
      and $X_i \to N_i$ are $3$-connected, it suffices to show that $X_i$ does not
      virtually $\F_2$-fibre. Consider any finite covering
      $f_i \colon \widehat{X_i} \to X_i$.  Let $\widehat{Y_i}$ be the preimage of $Y_i$
      under $f_i$.  Next consider any group epimorphism
      $\psi_i \colon G_i = \pi_1(\widehat{X_i}) \to \IZ$.  Let
      $p_i \colon \overline{X_i} \to \widehat{X_i}$ be the infinite cyclic covering
      associated to $\psi_i$.  Denote by $q_i \colon \overline{Y_i} \to \widehat{Y_i}$ its
      restriction to $\widehat{Y_i}$.  Then
      $H_2(\overline{X_i}) \cong_{\IZ} H_2(\overline{Y_i}) \oplus
      H_2(\overline{X_i},\overline{Y_i})$ and $H_2(\overline{X_i},\overline{Y_i})$ is
      isomorphic to $\bigoplus_I \IZ/l_i$ for some infinite index set $I$. Hence
      $H_2(\overline{X_i})$ is not finitely generated as an abelian group.  Therefore
      $\overline{X_i}$ is not of type $\F_2$. This shows that $X_i$ and hence $M_i$ does
      not $\F_2$-fibre.  \\[1mm]~\ref{the:products_and_fibring:finitely_fibres} This is
      the hard part of the proof.
     
      We want to show that the infinite cyclic covering
      $p \colon \overline{M_1 \times M_2} \to M_1 \times M_2$ associated to the
      epimorphism $\Phi$ defined in~\eqref{choice:of_Phi} has the property that the total
      space $\overline{M_1 \times M_2}$ is homotopy equivalent to a finitely dominated
      $CW$-complex.

      Recall that $K$ is the kernel of $\phi$.  Since there exists an exact sequence of
      groups $1 \to K_1 \times K_2 \to K \to \IZ \to 1$ and $K_1$ and $K_2$ are finitely
      presented by assumption, $K$ is finitely presented.  Obviously
      $K \cong \pi_1(\overline{M_1 \times M_2})$.

      We conclude from~\cite{Wall(1966)} or~\cite[Proposition~11.11 on page~222 and
      Proposition~14.9 on page~282]{Lueck(1989)} that $\overline{M_1 \times M_2}$ is
      homotopy equivalent to a finitely dominated $CW$-complex if and only if the cellular
      $\IZ[K]$-chain complex of the universal covering of $\overline{M_2 \times M_2}$,
      which is the restriction $\res_{G_1 \times G_2}^K C_*(\widetilde{M_1 \times M_2})$
      from $G_1 \times G_2$ to $K$ of the cellular $\IZ[G_1 \times G_2]$-chain complex
      $C_*(\widetilde{M_1 \times M_2})$, is homotopically of type $\FP$. So it remains to
      show that the $\IZ[K]$-chain complex
      $\res_{G_1 \times G_2}^K C_*(\widetilde{M_1}) \otimes_{\IZ} C_*(\widetilde{M_2})$ is
      homotopically of type $\FP$.

      We get for $i = 1,2$ a $\IZ[G_i]$-chain isomorphism
      \[
        C_*(\widetilde{X_i}) \cong D[i]_* \oplus \IZ[G_i] \otimes_{\IZ} E[i]_*
      \]
      if we put $D[i]_* = C_*(\widetilde{Y_i})$ and $E[i]_* = C_*(Z_i,\{z_i\})$. Since
      $K_i = \pi_1(\overline{Y_i})$, the cellular $\IZ[\pi_1(\overline{Y_i})]$-chain
      complex of the universal covering of $\overline{Y_i}$ is
      $\res_{G_i}^K C_*(\widetilde{Y_i}) $ and $\overline{Y_i}$ is homotopy equivalent to
      a finite $CW$-complex, $\res_{G_i}^K D[i]_*$ is homotopically of type $\FP$.  We
      conclude for the dual $D[i]^{d_i +1- *}$ of $D[i]_*$ from
      Lemma~\ref{lem:FP_and_duals} that $\res_{G_1 \times G_2}^{K_i} D[i]^{d_i +1- *}$ is
      homotopically of type $\FP$. The dual $(\IZ[G] \otimes E[i]_*)^{d_i + 1 -*}$ of
      $\IZ[G] \otimes E[i]_*$ is of the form $\IZ[G_i] \otimes_{\IZ} E[i]^{d_i +1 -*}$ for
      the finite free $\IZ$-chain complex $E[i]^{d_i +1 -*}$ given by the dual of
      $E[i]_*$.  We conclude that the dual $C^{d_i + 1-*}(\widetilde{X_i})$ of
      $C_*(\widetilde{X})$ is $\IZ[G_i]$-isomorphic to
      $D[i]^{d_i +1- *} \oplus (\IZ[G] \otimes E[i]^{d_i + 1 -*})$ and hence is special.
      Since $C_*(\widetilde{N_i})$ is $\IZ[G_i]$-chain homotopy equivalent to
      $C_*(\widetilde{X_i})$, the dual $C^{d_i +1 -*}(\widetilde{N_i})$ of
      $C_*(\widetilde{N_i})$ is a special finite free $\IZ[G_i]$-chain complex.  By
      Poincar\'e duality there is a $\IZ[G_i]$-chain homotopy equivalence
      $C^{d_i + 1 - *}(\widetilde{N_i}) \xrightarrow{\simeq}
      C_*(\widetilde{N_i},\widetilde{M_i})$.  So the upshot of this discussion is that
      both finite free $\IZ[G_i]$-chain complexes $C_*(\widetilde{N_i})$ and
      $C_*(\widetilde{N_i},\widetilde{M_i})$ are special.

      Note that for $i = 1,2$ we have the exact sequence of finite free $\IZ[G_i]$-chain
      complexes
      \[
        0 \to C_*(\widetilde{M_i}) \to C_*(\widetilde{N_i}) \to
        C_*(\widetilde{N_i},\widetilde{M_i}) \to 0.
      \]
      Now Lemma~\ref{lem:six_out_of_four} and
      Lemma~\ref{lem:products_of_special_complexes} imply that
      $\res^K_{G_1 \times G_2} C_*(\widetilde{M_1}) \otimes_{\IZ} C_*(\widetilde{M_2})$ is
      homotopically of type $\FP$.  This finishes the proof of
      Theorem~\ref{the:products_and_fibring}.
    \end{proof}

    \begin{question}\label{que:virtually_fibring_and_bundles_and_products}\
      \begin{enumerate}
      \item\label{lque:virtually_fibring_and_bundles_and_products:fibring} Are there
        examples of aspherical smooth closed manifolds $M_1$ and $M_2$ such that
        $M_1 \times M_2$ fibres over $S^1$ but neither $M_1$ nor $M_2$ fibre over $S^1$?
       
      \item\label{que:virtually_fibring_and_bundles_and_products:virtually_fibring} Are
        there examples of aspherical smooth closed manifolds $M_1$ and $M_2$ such that
        $M_1 \times M_2$ virtually fibres over $S^1$ but neither $M_1$ nor $M_2$ virtually
        fibres over $S^1$?

      \end{enumerate}
    \end{question}

\begin{remark}
  The examples of non virtually fibring manifolds in~\cite{Avramidi-Okun-Schreve(2024)} seem like a
  good place to look for counterexamples to Question~\ref{que:virtually_fibring_and_bundles_and_products}.
 \end{remark}


    \typeout{-------------------------- Appendix -------------------------}

\begin{appendix}
    \section{Survey on virtually fibring 3-manifolds}\label{sec:Leftovers}
We conclude with a short survey on virtually fibring $3$-manifolds.  We make no claim to originality here.

\begin{theorem}\label{the:virtually_fibring_and_3-manifolds}\

  \begin{enumerate}

  \item\label{the:virtually_fibring_and_3-manifolds:RFRS_not_graph} Let $M$ be an
    irreducible $3$-manifold with infinite fundamental group. Then $\pi_1(M)$ is a
    \RFRS-group if and only if one of the following conditions is satisfied:

    \begin{enumerate}
       
    \item $M$ is not a graph manifold;

    \item $M$ is a non-positively curved graph manifold;

    \end{enumerate}

  \item\label{the:virtually_fibring_and_3-manifolds:RFRS_imples_virt_fib} If $M$ is an
    irreducible $3$-manifold whose fundamental group is a non-trivial \RFRS-group, then
    $M$ virtually fibres over $S^1$;

  \item\label{the:virtually_fibring_and_3-manifolds:RFRS_non-positively_curved}
       
    Let $M$ be an aspherical closed $3$-manifold. Then $\pi_1(M)$ is \RFRS-group if and
    only if $M$ is non-positively curved;

  \item\label{the:virtually_fibring_and_3-manifolds:necessary} Let $M$ be a closed
    $3$-manifold which virtually fibres over $S^1$.

    Then either $M$ is aspherical or a finite covering of $M$ is homeomorphic to
    $S^1 \times S^2$.

  \end{enumerate}
\end{theorem}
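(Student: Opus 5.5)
The appendix theorem is a survey statement, so I plan to assemble it from the literature together with a small amount of geometric argument, treating the four assertions in turn.

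For assertion~\ref{the:virtually_fibring_and_3-manifolds:RFRS_not_graph}, I split along geometrization and the JSJ decomposition.

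If $M$ is not a graph manifold, then $M$ has a hyperbolic piece. In the closed hyperbolic case I use Agol's virtual specialness~\cite{Agol(2013)}. In the cusped hyperbolic case I use Wise~\cite{Wise(2009)}. In the mixed case I use Przytycki--Wise, which gives virtual specialness for mixed manifolds. In each case $\pi_1(M)$ virtually embeds into a right-angled Artin group. Right-angled Artin groups are \RFRS\ by Agol~\cite{Agol(2008)}, and \RFRS\ passes to subgroups, so $\pi_1(M)$ is virtually \RFRS.

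If $M$ is a non-positively curved graph manifold, Liu's theorem gives virtual specialness, and the same argument applies.

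For the converse, suppose $M$ is a graph manifold that is not non-positively curved. I would recall the list of such manifolds: the closed $\Nil$-, $\widetilde{\SL_2(\IR)}$- and $\Sol$-manifolds, together with the graph manifolds that Buyalo--Svetlov showed are not NPC. In the Seifert cases I argue as in Remark~\ref{rem:neither_hyperbolic_nor_virtually_RFRS}, using Theorem~\ref{the:fibring_and_fibrations}: the injectivity of the transgression map prevents the fibre class from surviving in any torsionfree virtually abelian quotient of a finite-index subgroup. For $\Sol$-manifolds and the non-NPC graph manifolds, I would cite the failure of virtual fibring, or equivalently the presence of a virtual nonzero $b_1$ obstruction. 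Combined with Theorem~\ref{the:Kielak} and the vanishing of $b_1^{(2)}$ for infinite $3$-manifold groups with nontrivial JSJ or Seifert structure (Lott--L\"uck), this shows the group cannot be \RFRS.

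Some care is needed here, since the statement as written should be read as ``virtually \RFRS''. I expect that a finite cover of an NPC graph manifold, rather than the manifold itself, is what is \RFRS.

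Assertion~\ref{the:virtually_fibring_and_3-manifolds:RFRS_imples_virt_fib} is Agol's criterion~\cite{Agol(2008)}. For an irreducible $M$ with nontrivial \RFRS\ fundamental group, $M$ is aspherical with infinite group. By Lott--L\"uck, $b_1^{(2)}(\pi_1 M)=0$ except when $M$ has nonempty, non-torus boundary, and that case is handled by Agol's theorem for compact manifolds with $\chi=0$ boundary. So Theorem~\ref{the:Kielak} gives a finite-index subgroup with finitely generated kernel. Stallings' fibration theorem then upgrades this algebraic fibring to a genuine fibration of a finite cover.

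Assertion~\ref{the:virtually_fibring_and_3-manifolds:RFRS_non-positively_curved} follows by combining assertion~\ref{the:virtually_fibring_and_3-manifolds:RFRS_not_graph} with the following facts. Closed hyperbolic and mixed manifolds are NPC, by Leeb. Closed aspherical Seifert and $\Sol$-manifolds are NPC exactly when they are Euclidean, i.e.\ modelled on $\IR^3$ or $\IH^2\times\IR$, and in those cases the group is virtually $\IZ^3$ or virtually $F\times\IZ$, which is \RFRS.

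For assertion~\ref{the:virtually_fibring_and_3-manifolds:necessary}, a finite cover $M'$ fibres with closed surface fibre $F$. If $F\ne S^2$, then $F$ is aspherical, so $M'$ is aspherical by the long exact homotopy sequence. Hence $M$ is aspherical, since the universal covers agree. If $F=S^2$, then $M'$ is an $S^2$-bundle over $S^1$, and after passing to a further double cover to make the monodromy orientation preserving it is $S^1\times S^2$.

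The main obstacle is the ``only if'' direction of assertion~\ref{the:virtually_fibring_and_3-manifolds:RFRS_not_graph}, specifically the non-NPC graph manifolds. For these I would first try a Kielak-based contradiction: show that some finite cover has fibred-cone structure incompatible with virtual fibring (Neumann, Svetlov), and that $b_1^{(2)}=0$.
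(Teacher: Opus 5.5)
\textbf{The main gap} is in the ``only if'' direction of assertion~(1) for graph manifolds that are not Seifert fibred. Your plan there is: the manifold does not virtually fibre and $b_1^{(2)}=0$, so by Kielak's theorem (plus Stallings) the group cannot be virtually \RFRS.

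This fails already for \Sol-manifolds. Up to a double cover, a closed \Sol-manifold is a torus bundle over $S^1$ with Anosov monodromy. So it virtually fibres, although it is neither non-positively curved nor virtually \RFRS. The paper itself notes such examples right after its proof.

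Kielak's theorem only turns ``\RFRS\ plus $b_1^{(2)}=0$'' into virtual fibring. So it gives no contradiction for any manifold that does virtually fibre. Your ``virtual nonzero $b_1$ obstruction'' is also not equivalent to non-fibring. Virtual fibring is not what characterises non-positive curvature among graph manifolds.

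The implication ``virtually \RFRS\ (or virtually special) $\Rightarrow$ non-positively curved'' for graph manifolds with nontrivial JSJ decomposition is exactly Liu's theorem. The paper cites Liu for the whole graph-manifold case, and you need that input.

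For \Sol\ alone there is an elementary replacement in the spirit of your Seifert argument:
\begin{itemize}
\item A finite-index subgroup of $\IZ^2\rtimes_A\IZ$ has the form $L\rtimes\IZ$, with $L\subseteq\IZ^2$ of finite index and the generator acting by $A^k$.
\item Since $A^k$ is hyperbolic, $\det(A^k-1)\neq 0$, so $L$ maps to zero in $H_1(-;\IQ)$.
\item By induction, $L$ lies in every term of any \RFRS\ chain, which is impossible.
\end{itemize}
Your assertion~(3) relies on~(1) for graph manifolds, so it inherits this gap.

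\textbf{Two further points.}

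In~(2), the case of a boundary component of genus $\ge 2$ is not ``handled by Agol''; it is false. There $\chi(M)=\tfrac12\chi(\partial M)<0$. A compact manifold fibring over $S^1$ has $\chi=0$, and $\chi$ is multiplicative under finite covers, so no finite cover fibres. A genus-two handlebody is irreducible with free, hence \RFRS, fundamental group, so it is a counterexample to~(2) as literally stated. One must add Agol's hypothesis $\chi(M)=0$, i.e.\ empty or toroidal boundary. Under that hypothesis your route through Lott--L\"uck, Kielak and Stallings is a valid alternative to quoting Agol directly, as the paper does.

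In~(4), a closed surface $F\neq S^2$ need not be aspherical: you omit $F=\IR\IP^2$. This is easily repaired. Every $\IR\IP^2$-bundle over $S^1$ is $\IR\IP^2\times S^1$, because the mapping class group of $\IR\IP^2$ is trivial, and this is double covered by $S^2\times S^1$. With that fix, your direct fibre-bundle argument for~(4) is more elementary than the paper's. The paper combines the vanishing of $L^2$-Betti numbers of virtually fibred manifolds with the Lott--L\"uck classification.

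Your remark that~(1) must be read with ``virtually \RFRS'' is correct.
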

\begin{proof}~\ref{the:virtually_fibring_and_3-manifolds:RFRS_not_graph} The fundamental
  group $\pi_1(M)$ is a \RFRS-group by Agol~\cite{Agol(2013)} and Przytycki and
  Wise~\cite{Przytycki-Wise(2018)} and Wise~\cite{Wise(2012hierachy)} if $M$ is not a
  graph manifold. If $M$ is a graph manifold then $\pi_1(M)$ is $\RFRS$-group if and only
  if $M$ is non-positively curved, see Liu~\cite{Liu(2013)}.
  \\[1mm]~\ref{the:virtually_fibring_and_3-manifolds:RFRS_imples_virt_fib} This is proved
  by Agol~\cite{Agol(2008)}.
  \\[1mm]~\ref{the:virtually_fibring_and_3-manifolds:RFRS_non-positively_curved} By the
  Sphere Theorem\cite[Theorem 4.3]{Hempel(1976)}, an irreducible $3$-manifold is
  aspherical if and only if has infinite fundamental group.  Using the prime decomposition
  and the fact that every prime manifold which is not irreducible is finitely covered by
  $S^1 \times S^2$ one concludes that a closed $3$-manifold is aspherical if and only if
  it is an irreducible $3$-manifold with infinite fundamental group.
   
  An irreducible $3$-manifold, which is not a graph manifold and has infinite fundamental
  group, is non-negatively curved, see Leeb~\cite{Leeb(1995)}.  Now the claim follows from
  assertion~\ref{the:virtually_fibring_and_3-manifolds:RFRS_imples_virt_fib}.
  \\[1mm]~\ref{the:virtually_fibring_and_3-manifolds:necessary} If $M$ virtually fibres
  over $S^1$, all its $L^2$-Betti numbers must vanish by L\"uck~\cite{Lueck(2002)}.  Now
  apply Lott--L\"uck~\cite{Lott-Lueck(1995)}.
\end{proof}

So the only class of closed $3$-manifolds, where we cannot decide in general whether they
virtually fibre over $S^1$ is the case of a graph manifold whose fundamental group is not
{\RFRS}.

Note that there are graph manifolds whose fundamental group is not {\RFRS}, whose geometry
is \Sol, and which virtually fibre over $S^1$, see Agol~\cite{Agol(2013)}.

Let $M$ be a closed Seifert manifold. Then its geometry is $S^3$, $\IR^3$,
$S^2 \times \IR$, $\IH^2 \times \IR$, $\Nil$, or $\widetilde{\SLpar{2}{\IR}}$.  There is a
finite covering $\overline{M} \to M$ and an $S^1$-principal bundle
$p \colon \overline{M} \to S $ for a closed orientable surface $S$ such that we get for
the Euler class $e = e(p)$ and the Euler characteristic $\chi = \chi(S)$
\[\begin{array}{c|ccc} &\chi > 0 & \chi =0 &
                                             \chi < 0 \\ \hline
    e =0  & S^2 \times \IR & \IR^3 & \IH^2 \times \IR \\
    e \ne 0  & S^3 & \Nil & \widetilde{\SLpar{2}{\IR}}
  \end{array}
\]
Hence $M$ does not virtually over $S^1$ if and only if the geometry is $S^3$ or
$\widetilde{SL_2(\IR)}$ by the following argument. If the geometry if $S^3$, the
fundamental group is finite and $M$ does not virtually fibre over $S^1$. If the geometry
is $\widetilde{SL_2(\IR)}$, Example~\ref{exa:B-a_surface} shows that $M$ does not
virtually fibre over $S^1$. In all other cases $\overline{M}$ fibres over $S^1$ by
Lemma~\ref{lem:virtually_fibring_and_bundles_and_products_inheritiance_from_base_total-space}~%
\ref{lem:virtually_fibring_and_bundles_and_products_inheritiance_from_base_total-space:virtually_fibring}.
   
\end{appendix}



\addcontentsline{toc<<}{section}{References} \bibliographystyle{abbrv}


\end{document}